\definecolor{red}{rgb}{0.9,0,0}
\definecolor{green}{rgb}{0,0.9,0}
\definecolor{blue}{rgb}{0,0,0.9}
\definecolor{pink}{rgb}{0.8,0,0.4}
 \def\R{{\cal R}}
\def\norm#1{\|#1\|}
\def\lam{{\lambda}}
\def\ni{\noindent}
\def\R{\mathbb{R}}
\def\bone{{\bf 1}}
\def\betaSc{\beta^{\bar{S}}}
\def\truebetaSc{\ddot{\beta}^{\bar{S}}}
\def\truebeta{\ddot{\beta}}
\def\Sc{\bar{S}}
\def\pstar{p_*}
\def\pone{p}
\def\ptwo{q}
\def\tauone{\sigma}
\def\tautwo{\tau}
\def\norm#1{\|#1\|}
\newtheorem{theorem}{Theorem}[section]
\newtheorem{proposition}{Proposition}[section]
\newtheorem{lemma}{Lemma}[section]
\newtheorem{remark}{Remark}[section]
\newtheorem{definition}{Definition}[section]
\newtheorem{assumption}{Assumption}[section]
\begin{document}

\title{\bf  A sparse semismooth Newton based proximal majorization-minimization algorithm for nonconvex square-root-loss regression problems }

\author{Peipei Tang\thanks{School of Computer and
Computing Science, Zhejiang University City College, Hangzhou
310015, China (tangpp@zucc.edu.cn). This author's research is supported by the Natural Science Foundation of Zhejiang Province of China under Grant No. LY19A010028 and the Science $\&$ Technology Development Project of Hangzhou, China under Grant No. 20170533B22.}, Chengjing Wang\thanks{{\bf Corresponding author}, School of Mathematics, Southwest Jiaotong University, No. 999, Xian Road, West Park, High-tech Zone, Chengdu 611756, China  ({\tt renascencewang@hotmail.com}).}, Defeng
Sun\thanks{Department of Applied Mathematics, The Hong Kong
Polytechnic University, Hung Hom, Hong Kong  (defeng.sun@polyu.edu.hk). {This author is supported by Hong Kong Research Grant Council grant PolyU153014/18p and Shenzhen Research Institute
of Big Data, Shenzhen 518000 grant 2019ORF01002.}}, and Kim-Chuan Toh\thanks{Department of Mathematics, National
University of Singapore (mattohkc@nus.edu.sg).
This author's research is supported in part by
the  Academic Research Fund of the Ministry of Education of Singapore under Grant No. R-146-000-257-112.
Part of this research is done while the author is visiting the Shenzhen Research Institute
of Big Data at the Chinese University of Hong Kong at Shenzhen.
}}

%\date{April 2, 2019}
\maketitle

\begin{abstract}
In this paper, we consider high-dimensional nonconvex square-root-loss regression problems and introduce a proximal majorization-minimization (PMM) algorithm for solving these problems. Our key idea for making the proposed PMM to be efficient is to develop a sparse semismooth Newton method to solve the corresponding subproblems.  By using the Kurdyka-{\L}ojasiewicz property exhibited in the underlining problems, we prove that the PMM algorithm converges to a d-stationary point. We also analyze the oracle property of the initial subproblem used in our algorithm. Extensive numerical experiments are presented to demonstrate the high efficiency of the proposed PMM algorithm.
\end{abstract}

\begin{keywords}
nonconvex square-root regression problems,  proximal majorization-minimization, semismooth Newton method
\end{keywords}

\section{Introduction}\label{sec:Introduction}
Sparsity estimation is one of the most important problems in
statistics, machine learning and signal processing.
%due to the recent development of new technologies.
%An example of such problems is to
One typical example on this aspect is to
estimate a vector $\hat{\beta}$ from a high-dimensional linear
regression model
\begin{eqnarray*}
b=X\truebeta+\varepsilon,
\end{eqnarray*}
where $X\in \R^{m\times n}$ is the design matrix, $b\in \R^m$ is the response
vector,  and $\varepsilon\in \R^m$ is the noise vector for which each of its component
$\varepsilon_{i}$ has zero-mean and unknown variance
$\varsigma^{2}$. In many applications, the number of attributes $n$ is
much larger than the sample size $m$ and
$\truebeta$ is known to be sparse a priori. Under the assumption of sparsity, a
regularizer which controls the overfitting and/or variable selection is often added to the model. One of the most commonly used
regularizers in practice  is the $\ell_{1}$ norm and the resulting model,
first proposed  in \cite{Tibshirani1996}, is usually referred to as
the Lasso model, which is given by
\begin{eqnarray}
\min_{\beta\in\R^{n}}\left\{\frac{1}{2}\norm{X
\beta-b}^{2}+\lambda\norm{\beta}_{1}\right\},\label{eq:Lasso}
\end{eqnarray}
 where $\|\cdot\|$ is the Euclidean norm in $\R^m$.
 % norm induced from the inner product $\langle\cdot,\cdot\rangle$ in a real finite-dimensional Euclidean space.
 The Lasso estimator produced from (\ref{eq:Lasso})  is
computationally attractive because it minimizes a structured convex
function. Moreover, when the error vector $\varepsilon$
follows a normal distribution and suitable design
conditions hold, this estimator achieves a near-oracle
performance. Despite having these attractive features, the Lasso
recovery of $\truebeta$ relies on knowing the standard deviation of the noise.
However, it is non-trivial to estimate the deviation when the feature
dimension is large, particularly when it is much
larger than the sample size. To overcome the
aforementioned defect, the authors in \cite{BelloniCW2011} proposed a new
estimator that solves the square-root Lasso (srLasso) model
\begin{eqnarray}\label{eq:SRLasso}
\min_{\beta\in\R^{n}}\Big\{\norm{X
\beta-b}+\lambda\norm{\beta}_{1}\Big\},
\end{eqnarray}
which eliminates the need to know or to pre-estimate
the deviation. It has been shown (see e.g., \cite{Bellec,Derumigny}) that the srLasso estimator can achieve the minimax optimal rate of convergence under some suitable conditions, even though the noise level $\varsigma$ is unknown.
It is worth  mentioning that the scaled Lasso proposed by Sun and Zhang \cite{SunZ2012} is essentially equivalent to the srLasso model \eqref{eq:SRLasso}. The solution approach proposed by the authors in \cite{SunZ2012} is to iteratively solve a sequence of Lasso problems, which can be expensive numerically. Moreover, Xu, Caramanis and Mannor \cite{XuCM} proved  that the srLasso model \eqref{eq:SRLasso} is equivalent to a robust linear regression problem subject to an uncertainty set that bounds the norm of the disturbance to each feature, which itself is an ideal approach of reducing sensitivity of linear regression.

The Lasso problem and the srLasso problem are both convex and computationally attractive. A number of algorithms, such as the accelerated proximal gradient (APG) method \cite{BeckT2009},  interior-point method (IPM) \cite{KimKLBG2007}, and  least angle regression (LARS) \cite{EfronHJT} have been proposed to solve the Lasso problem \eqref{eq:Lasso}. In a very recent work \cite{LiSunToh2018}, Li, Sun and Toh proposed a highly efficient semismooth Newton augmented Lagrangian method to solve the Lasso problem (\ref{eq:Lasso}).
In contrast to the Lasso problem \eqref{eq:Lasso}, there are currently
%few
no
efficient algorithms for solving the more challenging
srLasso problem \eqref{eq:SRLasso} due to the fact that the square-root loss function
in the objective is nonsmooth. Notably the alternating direction method of multipliers (ADMM) was applied to solve the srLasso problem \eqref{eq:SRLasso} by the authors   in \cite{Liuhan2015}. However, as can be seen from the numerical experiments conducted later in this paper, the ADMM  approach is not very efficient in solving  many large-scale problems.

Going beyond the $\ell_{1}$ norm regularizer, other regularization functions for variables selection are often used to avoid overfitting in the area of support vector machines and other statistical applications.
It has also been shown
that, instead of a convex relaxation with the  $\ell_{1}$ norm, a proper
nonconvex regularization can achieve a sparse estimation with fewer
measurements, %faster convergence
and is more robust
against noises \cite{Chartrand2007,Chen2014}.
After the pioneering work of Fan and Li \cite{Fan2001}, various
nonconvex sparsity
functions have been proposed as surrogates of the $\ell_0$ function
in the
last decade and they have been used as regularizers to avoid model overfitting (see e.g. \cite{Hastie2015}) in high-dimensional statistical learning.  It has been proven that each of these
nonconvex surrogate sparsity
functions can be expressed as the difference of two convex functions \cite{Ahn2017,Le-thi2015}.
Given the %exhibited
d.c. (difference of convex functions) property of these
nonconvex regularizers, it is natural for one to design a majorization-minimization algorithm to
solve the nonconvex problem. Such an exploitation of the d.c. property of the
regularization function had  been considered in the majorized
penalty approach proposed by Gao and Sun \cite{GaoS2010} for
solving a rank constrained correlation matrix estimation problem.

In this paper, we aim to develop an efficient and robust algorithm
for solving the following square-root regression problem
\begin{eqnarray}\label{eq:prob}
\min_{\beta\in\R^{n}}\Big\{g(\beta):=\norm{X\beta-b}+
\lambda p(\beta)-q(\beta) \Big\},\label{eq:ENSRLasso}
\end{eqnarray}
where the first part of the regularization function
$p:\R^{n}\rightarrow \R_+$ is a norm function whose proximal mapping is strongly semismooth and the second part
$q:\R^{n}\rightarrow\R$ is a convex smooth
function (the dependence of $q$ on $\lambda$ has been dropped here). We should note that the assumption made on $p$ is %extremely
rather
mild as the proximal mappings of many interesting functions, such as the $l_{1}$-norm function, are strongly semismooth \cite{Meng2005}.
For the case when $q\equiv 0$, the oracle property of the
model has been established in \cite{Sara2017} when $p$ is a weakly decomposable norm.
For the need of efficient computations, here we shall
extend the analysis to the same model but with
the proximal terms $\frac{\tauone}{2}\norm{\beta}^2 + \frac{\tautwo}{2}
\norm{X\beta-b}^2$ added, where $\tauone\geq 0$ and $\tautwo\geq 0$ are given
parameters.

Based on the {d.c.} structure of the nonconvex regularizer in \eqref{eq:ENSRLasso},
we    design a two stage proximal majorization-minimization (PMM)
algorithm to solve the  problem (\ref{eq:prob}).
In both stages of
 the PMM algorithm, the key step in each iteration is to solve a
convex subproblem whose objective contains the sum of two nonsmooth
functions, namely, the square-root-loss regression function and $p(\cdot)$.
One of the main contributions of this paper is in proposing a novel proximal majorization
approach to solve the said subproblem via its dual by a highly efficient semismooth
Newton method.
We  also analyze the convergence properties of our algorithm.
By using the Kurdyka-{\L}ojasiewicz (KL) property exhibited in the underlying problem, we prove that the PMM algorithm converges to a d-stationary point.
In the last part of the paper, we
present comprehensive numerical
results to demonstrate the efficiency of our semismooth Newton based PMM algorithm.
Based on the performance of our algorithm against two natural first-order methods, namely
the primal based and dual based ADMM (for the convex case), we can safely conclude that our algorithmic framework
is far superior for solving the square-root regression problem \eqref{eq:ENSRLasso}.

%%%%%%%%%%%%%%%%%%%%%%%%%%%%%%%
\section{Preliminary}
Let $f:\R^{n}\rightarrow (-\infty,+\infty]$ be a proper closed convex function.
The Fenchel conjugate of $f$ is defined by $f^{*}(x):=\sup_{y\in\R^{n}}
\{\langle y,x\rangle-f(y)\}$,
the proximal mapping and the Moreau envelope function of $f$ with parameter $t>0$  are defined, respectively, as
\begin{eqnarray*}
\mbox{Prox}_{f}(x)&:=&\mathop{\mbox{argmin}}_{y\in\R^{n}}\Big\{f(y)+\frac{1}{2}\|y-x\|^{2}\Big\},\quad\forall\, x\in\R^{n},\\
\Phi_{tf}(x)&:=&\min_{y\in\R^{n}}\Big\{f(y)+\frac{1}{2t}\|y-x\|^{2}\Big\},\quad\forall\, x\in\R^{n}.
\end{eqnarray*}
Let $t>0$ be a given parameter. Then by Moreau's identity theorem (see, e.g., Theorem 31.5 of \cite{Rockafellar96}), we have that
\begin{eqnarray}\label{Moreau-identity}
\mbox{Prox}_{tf}(x)+t\mbox{Prox}_{f^{*}/t}(x/t)=x, \quad\forall\; x \in \R^n.
\end{eqnarray}
We also know from Proposition 13.37 of \cite{Rockafellar} that $\Phi_{tf}$ is continuously differentiable with
\begin{eqnarray}\label{differentiable-Moreau-envelope}
\nabla \Phi_{tf}(x)=t^{-1}(x-\mbox{Prox}_{tf}(x)), \quad\forall\; x \in \R^n.
\end{eqnarray}

Given a set $C\subseteq\R^{n}$ and an arbitrary collection of functions $\left\{f_{i}\ |\ i\in I\right\}$ on $\R^{n}$, we denote $\delta_{C}(\cdot)$ as the indicator function of $C$ such that $\delta_{C}(\beta)=0$ if $\beta\in C$ and $\delta_{C}(\beta)=+\infty$ if $\beta\notin C$, $\mbox{conv}(C)$ as the convex hull of $C$ and $\mbox{conv}\{f_{i}\ |\ i\in I\}$ as the convex hull of the pointwise infimum of the collection. This means that $\mbox{conv}\{f_{i}\ |\ i\in I\}$ is the greatest convex function $f$ on $\R^{n}$ such that $f(\beta)\leq f_{i}(\beta)$ for any $\beta\in\R^{n}$ and $i\in I$.

Next we present some results in variational analysis from
\cite{Rockafellar}. Let $\Psi:\mathcal{O}\rightarrow\R^{m}$ be a locally Lipschitz continuous vector-valued
function
defined on an open set $\mathcal{O}\subseteq\R^{n}$. It follows from \cite[Theorem 9.60]{Rockafellar}
 that $\Psi$ is
F(r\'{e}chet)-differentiable almost everywhere on $\mathcal{O}$. Let
$\mathcal{D}_{\Psi}$ be the set of all points where $\Psi$ is
F-differentiable and $J\Psi(x)\in\R^{m\times n}$ be the
Jacobian of $\Psi$ at $x\in\mathcal{D}_{\Psi}$. For any $x\in\mathcal{O}$, the B-subdifferential of $\Psi$ at $x$ is defined
by
\begin{eqnarray*}
\partial_{B}\Psi(x):=\left\{V\in\R^{m\times n}\left|\exists\, \{x^{k}\}\subseteq\mathcal{D}_{\Psi}\ \mbox{such that}\ \lim_{k\rightarrow\infty}x^{k}=x\ \mbox{and}\ \lim_{k\rightarrow\infty}J\Psi(x^{k})=V\right.\right\}.
\end{eqnarray*}
The Clarke subdifferential of $\Psi$ at $x$ is defined as the convex
hull of the B-subdifferential of $\Psi$ at $x$, that is
$\partial_{C}\Psi(x):=\mbox{conv}(\partial_{B}\Psi(x))$.

Let   $\phi$ be defined from  $\R^n$ to $\R$. The Clarke subdifferential of $\phi$ at $x\in\R^{n}$ is defined by
\begin{eqnarray*}
\partial_{C}\phi(x):=\left\{h\in\R^{n}\left|\limsup_{x'\rightarrow x,t\downarrow 0}\frac{\phi(x'+ty)-\phi(x')-th^{T}y
}{t}\geq 0\right.,\ \forall\, y\in\R^{n}\right\}.
\end{eqnarray*}
The  regular subdifferential of $\phi$ at $x\in \R^n$ is defined as
\begin{eqnarray*}
\hat{\partial}\phi(x):=\left\{h\in\R^{n}\left|\liminf_{x\neq
y\rightarrow x}\frac{\phi(y)-\phi(x)-h^{T}(y-x)}{\|y-x\|}\geq
0\right.\right\}
\end{eqnarray*}
and the limiting subdifferential of $\phi$ at $x$ is defined as
\begin{eqnarray*}
\partial\phi(x):=\left\{h\in\R^{n}\left|\exists\ \{x^{k}\}\rightarrow x\ \mbox{and}\ \{h^{k}\}\rightarrow h\ \mbox{satisfying}\ h^{k}\in\hat{\partial}\phi(x^{k}),\ \forall\, k\right.\right\}.
\end{eqnarray*}
If $\phi$ is a convex function, then the Clarke subdifferential, the regular
 subdifferential and the limiting subdifferential of $\phi$ at $x$ coincide with the set of (transposed)  subgradients of $\phi$ at $x$ in the sense of convex analysis.

%Let $C$ be a convex set.
We know from Theorem 10.1 of
\cite{Rockafellar} that % $0\in\hat{\partial}\Phi(x)+\mathcal{N}(x;C)$
$0 \in \hat{\partial}\phi(\bar x)$
is a necessary condition for $\bar x\in \R^n$ to be a local minimizer of $\phi$.
If the
function $\phi$ (may not be convex) is locally Lipschitz continuous near $\bar x$ and directionally differentiable at $\bar x$, then $0 \in \hat{\partial}\phi(\bar x)$
%$0\in\hat{\partial}\Phi(x)+\mathcal{N}(x;C)$
 is equivalent to the
directional-stationarity (d-stationarity) of $\bar x$, that is
\begin{eqnarray*}
\phi'(\bar x;h):=\lim_{\lambda\downarrow 0}\frac{\phi(\bar x+\lambda
h)-\phi(\bar x)}{\lambda}\geq 0,\quad \forall\, h\in \R^n.
\end{eqnarray*}
In this paper, we will prove that the sequence generated by our algorithm converges to a d-stationary point of the problem.

For further discussions, we recall the concept of semismoothness
originated from   \cite{Mifflin1977,QiSun1993} and other two definitions  used in \cite{Sara2014}.
\begin{definition}
Let
$F:\mathcal{O}\subseteq\R^{n}\rightarrow\R^{m}$ be
a locally Lipschitz continuous function and
$\mathcal{K}:\mathcal{O}\rightrightarrows\R^{m\times n}$ be
a nonempty and compact valued, upper-semicontinuous set-valued
mapping on the open set $\mathcal{O}$.
$F$ is said to be semismooth
 at $v\in\mathcal{O}$ with respect to the set-valued
mapping
$\mathcal{K}$ if $F$ is directionally differentiable at $v$ and for
any $\Gamma\in\mathcal{K}(v+\Delta v)$ with $\Delta v\rightarrow0$,
\begin{eqnarray*}
F(v+\Delta v)-F(v)-\Gamma\Delta v=o(\|\Delta v\|).
\end{eqnarray*}
$F$ is said to be $\gamma$-order $(\gamma>0)$ (strongly, if
$\gamma=1$) semismooth at $v$ with respect to $\mathcal{K}$
if $F$ is semismooth at $v$ and for any
$\Gamma\in\mathcal{K}(v+\Delta v)$,
\begin{eqnarray*}
F(v+\Delta v)-F(v)-\Gamma\Delta v=O(\|\Delta v\|^{1+\gamma}).
\end{eqnarray*}
$F$ is called a semismooth ($\gamma$-order semismooth, strongly
semismooth) function on $\mathcal{O}$ with respect to $\mathcal{K}$
if it is semismooth ($\gamma$-order semismooth, strongly semismooth)
at every $v\in\mathcal{O}$ with respect to $\mathcal{K}$.
\end{definition}

\begin{definition}
The  norm function $p$ defined in $\R^{n}$ is said to be weakly decomposable
for an index set $S\subset\{1,2,\ldots,n\}$ if there exists a norm
$p^{\Sc}$ defined on $\R^{|\Sc|}$ such that
\begin{eqnarray*}
p(\beta)\geq
p(\beta_{S})+p^{\Sc}(\betaSc),\quad \forall\, \beta\in\R^{n},
\end{eqnarray*}
where $\Sc=\{1,2,\ldots,n\}\backslash S$, $\beta_{S} = \beta\circ \bone_S$
  and
$\betaSc := (\beta_j)_{j\in \Sc}\in \R^{|\Sc|}$. Here  $\bone_S\in \R^n$ denotes the indicator vector of $S$  and ``$\circ$'' denotes the elementwise product.
\end{definition}
The weakly decomposable property of a norm is a relaxation of the decomposability property of the $\ell_{1}$ norm. It has been proved in \cite{Sara2017} that many interesting regularizers such as the sparse group Lasso and SLOPE are weakly decomposable. A set $S$ is
said to be an allowed set if $p$ is a weakly decomposable norm for this set. We say
that a vector $\beta\in\R^{n}$ satisfies the $(L,S)$-cone condition for a norm $p$ if $p^{\Sc}(\betaSc)\leq L p(\beta_{S})$ with $L>0$ and $S$ being an allowed set.
\begin{definition}
Given $X\in\mathbb{R}^{m\times n}$. Let $S$ be an allowed set of a weakly decomposable norm $p$ and
$L>0$ be a constant. Then the $p$-eigenvalue is defined
as
\begin{eqnarray*}
\delta_{p}(L,S):=\min\left\{\|X\beta_{S}-X\beta_{\Sc}\|\, \left|\, p(\beta_{S})=1,\
p^{\Sc}(\betaSc)\leq L,\ \beta\in\mathbb{R}^{n}\right.\right\}.
\end{eqnarray*}
The $p$-effective sparsity is defined as
\begin{eqnarray*}
\Gamma_{p}(L,S):=\frac{1}{\delta_{p}(L,S)}.
\end{eqnarray*}
\end{definition}
Note that the $p$-eigenvalue defined above is a generalization of the compatibility constant defined in \cite{Sara2007}.

%%%%%%%%%%%%%%%%%%%%%%%%%%%%%%
\section{The oracle property of the square-root regression problem with a
generalized
elastic-net regularization}
\label{sec:oracle property}
We first consider the following convex problem without $q$ in \eqref{eq:prob}, that is
\begin{eqnarray}\label{primal-problem-convex}
\min_{\beta\in\R^{n}}\Big\{\|X\beta-b\|+ \lambda p(\beta)\Big\}.
\end{eqnarray}
By adding proximal terms, we shall analyze the oracle property of the
square-root regression problem with a generalized elastic-net regularization. For given $\tauone\geq 0$ and $ \tautwo\geq 0$, it takes the following form
\begin{eqnarray}
\min_{\beta\in\R^{n}}\left\{\|X\beta-b\|+ \lambda p(\beta)+
\frac{\tauone}{2}\norm{\beta}^{2}+\frac{\tautwo}{2}\|X\beta-b\|^{2}\right\},\label{eq:elastic
net}
\end{eqnarray}
whose optimal solution set is denoted as $\Omega(\tauone,\tautwo)$.
The purpose of this section is to study the
oracle property of an estimator $\hat{\beta}\in\Omega(\tauone,\tautwo)$
(whose residual is given by $\hat{\varepsilon}:=b-X\hat{\beta}$) to evaluate
how good the estimator is in estimating the true vector $\truebeta$.

For the given norm $p$, the dual norm of $p$ is given by
\begin{eqnarray*}
\pstar(\beta):=\max_{z\in\R^{n}}\Big\{\langle z,\beta\rangle\ |\ p(z)\leq 1\Big\},\ \forall\,\beta\in\R^{n}.
\end{eqnarray*}
For a weakly decomposable norm $p$ with the allowed set $S$, we let
\begin{eqnarray}\label{eq:notation-lambda}
&n_{p}=\frac{\lambda p(\truebeta)}{\norm{\varepsilon}},\
\lambda_{0}=
\frac{\pstar(\varepsilon^{T}X)}{\norm{\varepsilon}},\
\lambda_{m}=\max\left\{\frac{\pstar^{\Sc}((\varepsilon^{T}X)^{\Sc})}{\|\varepsilon\|},\frac{\pstar((\varepsilon^{T}X)_{S})}{\|\varepsilon\|},\pstar^{\Sc}(\truebetaSc),\pstar(\truebeta_{S})\right\},&
\\[5pt]
&
t_{1} = 1+\frac{\tautwo}{2}\|\varepsilon\|+\frac{\tauone p_{*}(\truebeta)p(\truebeta)}{2\|\varepsilon\|}, \quad
t_{2} = 2+\tau+\frac{\sigma p_{*}(\truebeta)p(\truebeta)}{\|\varepsilon\|},
\\[5pt]
&
c_u = t_{1}+n_{p}, \quad
a = \Big( \lam_0 +\tauone \pstar(\truebeta)c_u\Big)\frac{t_{1}}{\lam},
&
\label{eq:notation-a}
\end{eqnarray}
where $\pstar^{\Sc}$ denotes the dual norm of $p^{\Sc}$.

Next we state two basic assumptions which are similar to
those in \cite{Sara2017}. The first assumption is about non-overfitting in the sense that if the optimal solution
$\hat{\beta}$ of \eqref{sec:oracle property}
satisfies $\|\hat{\varepsilon}\|=0$, then it overfits. Furthermore, it has been proved in \cite{Pillo1988} that there exists a scalar $\lambda^{*}$ such that the solution of the problem \eqref{eq:elastic
net} satisfies $X\hat{\beta}-b\neq 0$ if $\lambda>\lambda^{*}$. In other words, one can find the parameter $\lambda$ to avoid overfitting.
\begin{assumption}
\label{assump-1} We assume that $\norm{\hat{\varepsilon}}\neq 0$ and $a+\frac{2\lambda_{0}n_{p}}{\lambda}<1$, where  the constant $a$ is defined in (\ref{eq:notation-a}).
\end{assumption}

\begin{assumption}
\label{assump-2}
The function $p$ is a norm function and weakly decomposable in $\R^{n}$ for
an index set $S\subset\{1,\ldots,n\}$, i.e., there exists a norm
$p^{\Sc}$ defined on $\R^{|\Sc|}$ such that
\begin{eqnarray*}
p(\beta) \geq p(\beta_{S}) +
p^{\Sc}(\betaSc),\quad \forall\, \beta\in\R^{n}.
\end{eqnarray*}
\end{assumption}

\begin{remark}
If $\sigma=0$ and $\tau=0$, then Assumption \ref{assump-1} goes back to Assumption I given by \cite{Sara2017}. Let $s>0$ and $P\left(\|\varepsilon\|\leq s\right)$ be the probability of $\|\varepsilon\|\leq s$. It follows from the comment to Lemma 1 of \cite{LaurentMassart2000} that $P\left(\|\varepsilon\|\leq \varsigma\sqrt{n-2\sqrt{ns}}\right)\leq e^{-s}$ and $P\left(\|\varepsilon\|\geq \varsigma\sqrt{n+2\sqrt{ns}+2s}\right)\leq e^{-s}$. Noticing that for $\alpha_{1}>e^{-n/4}$, $\alpha_{2}>0$ and $\alpha_{1}+\alpha_{2}<1$, $P\left(\bar{\chi}_{1}\leq\|\varepsilon\|\leq \bar{\chi}_{2}\right)\geq 1-\alpha_{1}-\alpha_{2}$
with $\bar{\chi}_{1}:=\varsigma\sqrt{n-2\sqrt{-n\ln(\alpha_{1})}}$ and $\bar{\chi}_{2}:=\varsigma\sqrt{n+2\sqrt{-n\ln(\alpha_{2})}-2\ln(\alpha_{2})}$,
the inequalities $n_{p}\leq \lambda p(\truebeta)/\bar{\chi}_{1}$, $t_{1}\leq 1+\tau\bar{\chi}_{2}/2+\sigma p_{*}(\truebeta)p(\truebeta)/(2\bar{\chi}_{1}):=\bar{t}_{1}, c_{u}\leq \bar{t}_{1}+\lambda p(\truebeta)/\bar{\chi}_{1}$, $a\leq\left(\bar{\lambda}_{0}+\sigma p_{*}(\truebeta)\bar{t}_{1}+\lambda\sigma p_{*}(\truebeta)p(\truebeta)/\bar{\chi}_{1}\right)\bar{t}_{1}/\lambda$ hold with probability at least $1-\alpha_{1}-\alpha_{2}$. It has been proven in section 3.4 of \cite{Sara2017} that $\max\left\{\pstar^{\Sc}((\varepsilon^{T}X)^{\Sc})/\|\varepsilon\|,\pstar((\varepsilon^{T}X)_{S})/\|\varepsilon\|\right\}\leq\bar{\lambda}_{0}$ holds with high probability for some $\bar{\lambda}_{0}$ (may depend on $\sqrt{m}$ and $\sqrt{ln(n)}$). Combining with the result $\lambda_{0}\leq\bar{\lambda}_{0}$ by Lemma 3.1 (see below), $\bar{\chi}_{1}>2\bar{\lambda}_{0}p(\truebeta)$ is valid for high dimensional problems with high probability. We can choose $\tau$, $\sigma$ and $\lambda$ with
\begin{eqnarray*}
\bar{\chi}_{1}&>&\sigma p_{*}(\truebeta)p(\truebeta)\bar{t}_{1}+2\bar{\lambda}_{0}p(\truebeta),\\ \lambda&>&\max\left\{\frac{\bar{\chi}_{1}(\bar{\lambda}_{0}\bar{t}_{1}+\sigma p_{*}(\truebeta)\bar{t}_{1}^{2})}{\bar{\chi}_{1}-\sigma p_{*}(\truebeta)p(\truebeta)\bar{t}_{1}-2\bar{\lambda}_{0}p(\truebeta)},\lambda^{*}\right\}
\end{eqnarray*}
such that Assumption \ref{assump-1} holds with high probability with respect to the
random noise vector $\varepsilon$.
Assumption \ref{assump-2} was also used in \cite{Bach2012} and \cite{Sara2014}.
%to give an assumption on the structure of the sparsity inducing norm.
\end{remark}

We also need some lemmas in order to prove the main theorem. First we introduce the following basic relationship between $\lambda_{0}$ and $\lambda_{m}$.\footnote{Actually in \cite{Sara2017} Stucky and van de Geer once employed this relationship without a proof. For the sake of clarity, we present this fact in the form of a lemma here and its proof in the appendix.}
\begin{lemma}
Let $S$ be an allowed set of a weakly decomposable norm $p$. For the parameters $\lambda_{0}$ and $\lambda_{m}$ defined by (\ref{eq:notation-lambda}), we have $\lambda_{0}\leq\lambda_{m}$ and $p_{*}(\truebeta)\leq\lambda_{m}$.
\end{lemma}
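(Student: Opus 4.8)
The plan is to reduce both claimed inequalities to a single structural property of the dual norm that is forced by weak decomposability: for every $u\in\R^{n}$,
\eqn{
&\pstar(u)\leq\max\big\{\pstar(u_{S}),\,\pstar^{\Sc}(u^{\Sc})\big\}.&
}
Granting this, the lemma follows at once. Applying it with $u=\varepsilon^{T}X$ (regarded as an element of $\R^{n}$) gives $\pstar(\varepsilon^{T}X)\leq\max\{\pstar((\varepsilon^{T}X)_{S}),\,\pstar^{\Sc}((\varepsilon^{T}X)^{\Sc})\}$; dividing by $\norm{\varepsilon}$ and noting that the two resulting quotients are exactly two of the four quantities in the definition \eqref{eq:notation-lambda} of $\lambda_{m}$ yields $\lambda_{0}\leq\lambda_{m}$. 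Applying the same property with $u=\truebeta$ gives $\pstar(\truebeta)\leq\max\{\pstar(\truebeta_{S}),\,\pstar^{\Sc}(\truebetaSc)\}$, and since $\pstar(\truebeta_{S})$ and $\pstar^{\Sc}(\truebetaSc)$ are the two remaining terms defining $\lambda_{m}$, we obtain $\pstar(\truebeta)\leq\lambda_{m}$.

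To establish the displayed inequality I would begin from the variational formula $\pstar(u)=\max_{p(\beta)\leq1}\inprod{u}{\beta}$ and split the inner product along the partition $\{1,\dots,n\}=S\cup\Sc$, writing $\inprod{u}{\beta}=\inprod{u_{S}}{\beta_{S}}+\inprod{u^{\Sc}}{\betaSc}$, where the first inner product is taken in $\R^{n}$ and the second in $\R^{|\Sc|}$. The generalized Cauchy--Schwarz (H\"{o}lder) inequality between a norm and its dual then bounds these two terms by $\pstar(u_{S})\,p(\beta_{S})$ and $\pstar^{\Sc}(u^{\Sc})\,p^{\Sc}(\betaSc)$, respectively. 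Writing $M:=\max\{\pstar(u_{S}),\pstar^{\Sc}(u^{\Sc})\}$ and factoring it out, I obtain $\inprod{u}{\beta}\leq M\big(p(\beta_{S})+p^{\Sc}(\betaSc)\big)$. Finally I would invoke the weak decomposability $p(\beta)\geq p(\beta_{S})+p^{\Sc}(\betaSc)$ of Assumption \ref{assump-2} to conclude $\inprod{u}{\beta}\leq M\,p(\beta)\leq M$ for every $\beta$ with $p(\beta)\leq1$; taking the supremum over such $\beta$ gives $\pstar(u)\leq M$, which is the desired inequality.

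The only step carrying genuine content is this last one: the crux is to couple the two H\"{o}lder estimates through the single constant $M$, so that the additive lower bound on $p(\beta)$ supplied by weak decomposability can collapse the two pieces into the single clean bound $M\,p(\beta)$. Everything else --- the splitting of the inner product, the division by $\norm{\varepsilon}$, and the matching of the four quantities with the terms defining $\lambda_{m}$ --- is routine bookkeeping. The one technical point to handle carefully is the index bookkeeping that distinguishes the zero-padded objects $u_{S},\beta_{S}\in\R^{n}$ from the compressed objects $u^{\Sc},\betaSc\in\R^{|\Sc|}$, ensuring throughout that $\pstar$ is applied on $\R^{n}$ while $\pstar^{\Sc}$ and $p^{\Sc}$ act on $\R^{|\Sc|}$.
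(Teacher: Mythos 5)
Your proposal is correct, and it reduces the lemma to exactly the same key inequality as the paper, namely $\pstar(u)\leq\max\{\pstar(u_{S}),\,\pstar^{\Sc}(u^{\Sc})\}$, followed by the same routine specializations to $u=\varepsilon^{T}X$ (divided by $\norm{\varepsilon}$) and $u=\truebeta$. Where you differ is in how that inequality is established. The paper works with the sets $C_{1}=\{z:p(z_{S})\leq1,\,z^{\Sc}=0\}$, $C_{2}=\{z:p^{\Sc}(z^{\Sc})\leq1,\,z^{S}=0\}$ and $C=\{z:p(z_{S})+p^{\Sc}(z^{\Sc})\leq1\}$, proves $C=\mbox{conv}(C_{1}\cup C_{2})$, and then invokes Theorems 5.6 and 16.5 of Rockafellar to identify $\delta^{*}_{C}$ with $\max\{\delta^{*}_{C_{1}},\delta^{*}_{C_{2}}\}$; weak decomposability enters only through the containment $\{p(z)\leq1\}\subseteq C$. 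Your route replaces all of this convex-analytic machinery with a two-line direct estimate: split $\inprod{u}{\beta}$ over $S$ and $\Sc$, apply the generalized Cauchy--Schwarz inequality to each piece, factor out $M=\max\{\pstar(u_{S}),\pstar^{\Sc}(u^{\Sc})\}$, and use $p(\beta_{S})+p^{\Sc}(\betaSc)\leq p(\beta)$ to collapse the bound to $M\,p(\beta)\leq M$ on the unit ball of $p$. This is more elementary and self-contained; what the paper's argument buys in exchange is the exact identification of the support function of $C$ (an equality rather than just the one-sided bound), which is more information than the lemma actually needs. Both arguments are sound, and your index bookkeeping between the zero-padded vectors in $\R^{n}$ and the compressed vectors in $\R^{|\Sc|}$ is handled correctly.
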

The following lemma, which is from \cite{Sara2014}, shows that
$p(\beta_{S})$ is bounded by $\|X\beta\|$. %We simply state the result with no proof.
\begin{lemma}\label{lemma:sara}
Given $X\in\mathbb{R}^{m\times n}$. Let $S$ be an allowed set of a weakly decomposable norm $p$ and
$L>0$ be a constant. Then the $p$-eigenvalue can be expressed in the
 following form:
\begin{eqnarray*}
\delta_{p}(L,S)=\min\left\{\frac{\|X\beta\|}{p(\beta_{S})} \; \Big| \;
\beta\in\mathbb{R}^{n}\ \mbox{satisfies the $(L,S)$-cone condition and}\ \beta_{S}\neq
0\right\}.
\end{eqnarray*}
That is $p(\beta_{S})\leq \Gamma_{p}(L,S)\|X\beta\|$.
\end{lemma}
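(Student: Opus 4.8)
The plan is to derive the claimed ratio characterization directly from the defining formula for $\delta_{p}(L,S)$ by two elementary reductions: a sign reflection that turns the difference $X\beta_{S}-X\beta_{\Sc}$ appearing in the definition into $X\beta$, and a positive-homogeneity argument that exchanges the normalization $p(\beta_{S})=1$ for the scale-invariant $(L,S)$-cone condition $p^{\Sc}(\betaSc)\le L\,p(\beta_{S})$.

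First I would dispose of the sign discrepancy between the definition and the asserted form. By definition $\delta_{p}(L,S)$ is the minimum of $\|X\beta_{S}-X\beta_{\Sc}\|$ over the feasible set $\{\,p(\beta_{S})=1,\ p^{\Sc}(\betaSc)\le L\,\}$. Because $p^{\Sc}$ is a norm it is an even function, so the reflection $\beta_{\Sc}\mapsto-\beta_{\Sc}$ (equivalently $\betaSc\mapsto-\betaSc$) is a bijection of this feasible set that leaves both constraints unchanged. Since $\beta_{S}$ and $\beta_{\Sc}$ have disjoint supports and sum to $\beta=\beta_{S}+\beta_{\Sc}$, this reflection sends the objective $\|X\beta_{S}-X\beta_{\Sc}\|$ to $\|X\beta_{S}+X\beta_{\Sc}\|=\|X\beta\|$. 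Hence
\[
\delta_{p}(L,S)=\min\left\{\|X\beta\|\ \Big|\ p(\beta_{S})=1,\ p^{\Sc}(\betaSc)\le L\right\}.
\]
Here the feasible set is compact (the $p$-unit sphere supported on $S$ is closed and bounded, and $\{p^{\Sc}(\betaSc)\le L\}$ is a bounded $p^{\Sc}$-ball), so the minimum is attained and the use of $\min$ is justified.

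Next I would invoke homogeneity. The maps $\beta\mapsto\|X\beta\|$ and $\beta\mapsto p(\beta_{S})$, as well as the cone condition $p^{\Sc}(\betaSc)\le L\,p(\beta_{S})$, are all positively homogeneous, so I can normalize. Given any $\beta$ satisfying the $(L,S)$-cone condition with $\beta_{S}\neq0$, set $t=1/p(\beta_{S})>0$ and $\widetilde{\beta}=t\beta$; then $p(\widetilde{\beta}_{S})=1$, $p^{\Sc}(\widetilde{\beta}^{\Sc})=t\,p^{\Sc}(\betaSc)\le t\,L\,p(\beta_{S})=L$, and $\|X\widetilde{\beta}\|=\|X\beta\|/p(\beta_{S})$. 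Conversely, any $\widetilde{\beta}$ with $p(\widetilde{\beta}_{S})=1$ and $p^{\Sc}(\widetilde{\beta}^{\Sc})\le L$ automatically satisfies the cone condition with $\widetilde{\beta}_{S}\neq0$ and has $\|X\widetilde{\beta}\|/p(\widetilde{\beta}_{S})=\|X\widetilde{\beta}\|$. This two-way correspondence shows that the two minimization problems have the same optimal value, so
\[
\delta_{p}(L,S)=\min\left\{\frac{\|X\beta\|}{p(\beta_{S})}\ \Big|\ \beta\ \text{satisfies the}\ (L,S)\text{-cone condition and}\ \beta_{S}\neq0\right\}.
\]

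Finally, the displayed inequality follows at once: for any such $\beta$ we have $\|X\beta\|/p(\beta_{S})\ge\delta_{p}(L,S)$, i.e. $p(\beta_{S})\le\delta_{p}(L,S)^{-1}\|X\beta\|=\Gamma_{p}(L,S)\|X\beta\|$, which also holds trivially when $\beta_{S}=0$. I do not anticipate any serious obstacle here; the only genuinely non-routine point is the sign reflection in the first step, where one must observe that the minus sign in the definition of $\delta_{p}(L,S)$ is immaterial precisely because the norm $p^{\Sc}$, and hence the entire feasible set, is invariant under $\betaSc\mapsto-\betaSc$. Everything else is homogeneity bookkeeping.
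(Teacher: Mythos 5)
Your proof is correct. Note that the paper itself gives no proof of this lemma, importing it directly from \cite{Sara2014}; your argument --- replacing $\beta_{\Sc}$ by $-\beta_{\Sc}$ to turn $\|X\beta_S-X\beta_{\Sc}\|$ into $\|X\beta\|$ (legitimate because the norm $p^{\Sc}$ is even, so the feasible set is invariant), and then trading the normalization $p(\beta_S)=1$ for the scale-invariant cone condition by positive homogeneity --- is exactly the standard route by which this identity is established in the cited reference, and your compactness remark correctly justifies the use of $\min$ rather than $\inf$. The only caveat worth recording is that the final inequality $p(\beta_S)\le\Gamma_p(L,S)\|X\beta\|$ is informative only when $\delta_p(L,S)>0$, which is an implicit compatibility assumption on $X$ rather than a gap in your reasoning.
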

An upper bound of
$\hat{\varepsilon}^{T}X(\truebeta-\hat{\beta})$, a lower bound and an
upper bound of $\|\hat{\varepsilon}\|$ are also important. They are presented in
the following two lemmas.
\begin{lemma}
Suppose that Assumption \ref{assump-1} holds. For the estimator $\hat{\beta}$ of the
generalized elastic-net square-root regression problem (\ref{eq:elastic
net}), we have
\begin{eqnarray*}
\hat{\varepsilon}^{T}X(\truebeta-\hat{\beta})\leq\left(\tautwo+\frac{1}{\|\hat{\varepsilon}\|}\right)^{-1}\left(\lambda
p(\truebeta)+\tauone\pstar(\truebeta)p(\hat{\beta})\right).
\end{eqnarray*}
\end{lemma}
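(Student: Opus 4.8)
The plan is to read off the first-order optimality condition at $\hat{\beta}$, pair it against the direction $\truebeta-\hat{\beta}$, and then control the resulting right-hand side by elementary dual-norm inequalities. First I would invoke Assumption \ref{assump-1}, which guarantees $\norm{\hat{\varepsilon}}\neq 0$, to observe that the two loss terms $\norm{X\beta-b}$ and $\frac{\tautwo}{2}\norm{X\beta-b}^{2}$ are differentiable at $\hat{\beta}$, with gradients $-X^{T}\hat{\varepsilon}/\norm{\hat{\varepsilon}}$ and $-\tautwo X^{T}\hat{\varepsilon}$ respectively (recall $X\hat{\beta}-b=-\hat{\varepsilon}$). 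Since the generalized elastic-net objective is convex, the stationarity condition $0\in\partial F(\hat{\beta})$ produces a subgradient $v\in\partial p(\hat{\beta})$ such that
\[
\Big(\tautwo+\tfrac{1}{\norm{\hat{\varepsilon}}}\Big)X^{T}\hat{\varepsilon}=\lambda v+\tauone\hat{\beta}.
\]

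Next I would take the inner product of this identity with $\truebeta-\hat{\beta}$. The left-hand side becomes $\big(\tautwo+\norm{\hat{\varepsilon}}^{-1}\big)\,\hat{\varepsilon}^{T}X(\truebeta-\hat{\beta})$, which is exactly the quantity to be bounded, while the right-hand side splits into $\lambda\inprod{v}{\truebeta-\hat{\beta}}+\tauone\inprod{\hat{\beta}}{\truebeta-\hat{\beta}}$. It then remains to bound these two terms. For the first, I would use the standard characterization of the subdifferential of a norm: $v\in\partial p(\hat{\beta})$ forces $\pstar(v)\leq 1$ together with the Euler relation $\inprod{v}{\hat{\beta}}=p(\hat{\beta})$. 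Hence $\inprod{v}{\truebeta-\hat{\beta}}=\inprod{v}{\truebeta}-p(\hat{\beta})\leq \pstar(v)\,p(\truebeta)-p(\hat{\beta})\leq p(\truebeta)$ by the dual-norm (H\"{o}lder-type) inequality, so the first term is at most $\lambda p(\truebeta)$.

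For the second term, the same dual-norm inequality gives $\inprod{\hat{\beta}}{\truebeta}\leq p(\hat{\beta})\pstar(\truebeta)$, and since $-\tauone\norm{\hat{\beta}}^{2}\leq 0$ can simply be discarded, I obtain $\tauone\inprod{\hat{\beta}}{\truebeta-\hat{\beta}}\leq \tauone\pstar(\truebeta)p(\hat{\beta})$. Adding the two bounds and dividing through by the strictly positive factor $\tautwo+\norm{\hat{\varepsilon}}^{-1}$ (positive precisely because $\norm{\hat{\varepsilon}}\neq 0$ and $\tautwo\geq 0$) then yields the claimed inequality.

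I expect the computation to be essentially routine once the stationarity condition is written down; the only point requiring genuine care is justifying that $\norm{X\beta-b}$ is smooth at the optimum, which is exactly where the hypothesis $\norm{\hat{\varepsilon}}\neq 0$ from Assumption \ref{assump-1} is used, together with keeping the signs straight in the gradient of the square-root loss and correctly applying the norm-subdifferential identity $\inprod{v}{\hat{\beta}}=p(\hat{\beta})$. No deeper obstacle is anticipated.
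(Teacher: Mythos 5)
Your proposal is correct and follows essentially the same route as the paper: both write the first-order optimality condition at $\hat{\beta}$ (using $\|\hat{\varepsilon}\|\neq 0$ to differentiate the square-root loss), pair it with $\truebeta-\hat{\beta}$, and bound the resulting terms by $\lambda p(\truebeta)$ and $\tauone\pstar(\truebeta)p(\hat{\beta})$ after discarding $-\lambda p(\hat{\beta})$ and $-\tauone\|\hat{\beta}\|^{2}$. The only cosmetic difference is that you invoke the norm-subdifferential characterization ($\pstar(v)\leq 1$, $\langle v,\hat{\beta}\rangle=p(\hat{\beta})$) where the paper applies the convex subgradient inequality directly; the two are equivalent here.
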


\begin{lemma}
Suppose that Assumption \ref{assump-1} holds. We have
\begin{eqnarray*}
c_{l}:=\frac{1-a-\frac{2\lambda_{0}n_{p}}{\lambda}}{2+\left(1+\frac{\tauone\pstar(\truebeta)}{\lambda}\right)n_{p}}
\; \leq\;
\frac{\|\hat{\varepsilon}\|}{\|\varepsilon\|}
\;\leq\;
c_{u},
\end{eqnarray*}
where the constants $c_u$ and $a$ are defined in \eqref{eq:notation-a}.
\end{lemma}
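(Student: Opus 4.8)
The plan is to prove the two bounds separately, both resting on the optimality of $\hat\beta$ for the generalized elastic-net problem and on the generalized Cauchy--Schwarz inequality $\langle u,v\rangle\le p(u)\pstar(v)$ relating the norm $p$ to its dual. Throughout I would use the elementary identity $\hat\varepsilon=\varepsilon+X(\truebeta-\hat\beta)$, which follows from $\varepsilon=b-X\truebeta$ and $\hat\varepsilon=b-X\hat\beta$, to convert statements about residuals into statements about $X(\truebeta-\hat\beta)$.

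For the upper bound I would first exploit $G(\hat\beta)\le G(\truebeta)$, where $G$ is the elastic-net objective. Since $X\truebeta-b=-\varepsilon$, evaluating the right-hand side gives
\begin{eqnarray*}
\norm{\hat\varepsilon}+\lambda p(\hat\beta)+\tfrac{\tauone}{2}\norm{\hat\beta}^2+\tfrac{\tautwo}{2}\norm{\hat\varepsilon}^2\;\le\;\norm{\varepsilon}+\lambda p(\truebeta)+\tfrac{\tauone}{2}\norm{\truebeta}^2+\tfrac{\tautwo}{2}\norm{\varepsilon}^2.
\end{eqnarray*}
Bounding $\norm{\truebeta}^2=\langle\truebeta,\truebeta\rangle\le\pstar(\truebeta)p(\truebeta)$ makes the right-hand side equal to $(t_1+n_p)\norm{\varepsilon}=c_u\norm{\varepsilon}$; discarding the three nonnegative terms $\lambda p(\hat\beta),\tfrac{\tauone}{2}\norm{\hat\beta}^2,\tfrac{\tautwo}{2}\norm{\hat\varepsilon}^2$ then reads off $\norm{\hat\varepsilon}\le c_u\norm{\varepsilon}$. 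Crucially, the same inequality, after discarding instead $\norm{\hat\varepsilon}$ and the two quadratic terms, yields the auxiliary bound $p(\hat\beta)\le c_u\norm{\varepsilon}/\lambda$, which (since $c_u=t_1+n_p$ and $n_p\norm{\varepsilon}=\lambda p(\truebeta)$) rewrites as $p(\hat\beta)\le t_1\norm{\varepsilon}/\lambda+p(\truebeta)$, the form that drives the lower bound.

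For the lower bound I would start from $\norm{\varepsilon}^2=\langle\varepsilon,\hat\varepsilon\rangle-\varepsilon^T X(\truebeta-\hat\beta)$. Applying Cauchy--Schwarz to the first term and the duality estimate $-\varepsilon^T X(\truebeta-\hat\beta)\le\pstar(\varepsilon^T X)\,p(\truebeta-\hat\beta)\le\lambda_0\norm{\varepsilon}\big(p(\truebeta)+p(\hat\beta)\big)$ to the second, then dividing by $\norm{\varepsilon}$, produces the backbone inequality
\begin{eqnarray*}
\norm{\varepsilon}\;\le\;\norm{\hat\varepsilon}+\lambda_0\big(p(\truebeta)+p(\hat\beta)\big).
\end{eqnarray*}
Substituting $p(\hat\beta)\le t_1\norm{\varepsilon}/\lambda+p(\truebeta)$ here generates the term $2\lambda_0 p(\truebeta)=\tfrac{2\lambda_0 n_p}{\lambda}\norm{\varepsilon}$ (the numerator's $\tfrac{2\lambda_0 n_p}{\lambda}$) together with $\tfrac{\lambda_0 t_1}{\lambda}\norm{\varepsilon}$ (the $\lambda_0$-part of $a$). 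To capture the remaining $\tauone$-dependent part of $a$ and the denominator $2+(1+\tauone\pstar(\truebeta)/\lambda)n_p$, I would feed the preceding lemma's bound on $\hat\varepsilon^T X(\truebeta-\hat\beta)$ --- which carries the factor $(\tautwo+1/\norm{\hat\varepsilon})^{-1}$ and the penalty term $\tauone\pstar(\truebeta)p(\hat\beta)$ --- into the companion identity $\norm{\hat\varepsilon}^2=\langle\hat\varepsilon,\varepsilon\rangle+\hat\varepsilon^T X(\truebeta-\hat\beta)$, once more using $p(\hat\beta)\le c_u\norm{\varepsilon}/\lambda$ to turn the penalty contribution into $\norm{\hat\varepsilon}$- and $\norm{\varepsilon}$-proportional terms. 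Collecting all such terms and rearranging then isolates $\norm{\hat\varepsilon}/\norm{\varepsilon}\ge c_l$.

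The hard part will be the constant bookkeeping in the lower bound: assembling exactly the coefficient $a=(\lambda_0+\tauone\pstar(\truebeta)c_u)t_1/\lambda$ and the denominator $2+(1+\tauone\pstar(\truebeta)/\lambda)n_p$ requires propagating the two penalty parameters $\tauone,\tautwo$ simultaneously through the preceding lemma's factor $(\tautwo+1/\norm{\hat\varepsilon})^{-1}$ and through the $p(\hat\beta)$-bound without double counting any term, and recognizing the recurring combinations $c_u=t_1+n_p$ and $n_p\norm{\varepsilon}=\lambda p(\truebeta)$ at each step. Assumption \ref{assump-1} enters decisively at two points: $\norm{\hat\varepsilon}\neq 0$ legitimizes dividing by $\norm{\hat\varepsilon}$ (equivalently, the differentiability of the square-root loss at $\hat\beta$ underlying the preceding lemma), while $a+\tfrac{2\lambda_0 n_p}{\lambda}<1$ guarantees that the numerator of $c_l$ is strictly positive, so the bound is nonvacuous.
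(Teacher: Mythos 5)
Your proposal is correct, and the upper bound is proved exactly as in the paper (compare the objective values at $\hat\beta$ and $\truebeta$, bound $\norm{\truebeta}^2\le \pstar(\truebeta)p(\truebeta)$, and extract both $\norm{\hat\varepsilon}\le c_u\norm{\varepsilon}$ and $\lambda p(\hat\beta)\le c_u\norm{\varepsilon}$). For the lower bound, however, you take a genuinely different and in fact simpler route. The paper bounds $\norm{X(\hat\beta-\truebeta)}^2=\varepsilon^{T}X(\hat\beta-\truebeta)+\hat{\varepsilon}^{T}X(\truebeta-\hat\beta)$ from above, using Lemma 3.3 for the second term, takes a square root, and then applies $(1-x)^2\ge 1-2x$ with $x=\norm{\hat\varepsilon}/\norm{\varepsilon}$; it is precisely this last quadratic rearrangement, together with the $\norm{\hat\varepsilon}$-dependent term inside the square root, that manufactures the denominator $2+(1+\tauone\pstar(\truebeta)/\lambda)n_{p}$ of $c_l$. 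Your linear backbone $\norm{\varepsilon}\le\norm{\hat\varepsilon}+\lambda_{0}\bigl(p(\truebeta)+p(\hat\beta)\bigr)$ combined with $p(\hat\beta)\le t_{1}\norm{\varepsilon}/\lambda+p(\truebeta)$ already gives $\norm{\hat\varepsilon}/\norm{\varepsilon}\ge 1-\tfrac{2\lambda_{0}n_{p}}{\lambda}-\tfrac{\lambda_{0}t_{1}}{\lambda}\ge 1-a-\tfrac{2\lambda_{0}n_{p}}{\lambda}$, and since under Assumption 3.1 this quantity is positive while the denominator of $c_l$ is at least $2$, it strictly dominates $c_l$: the lemma follows immediately. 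So the ``hard part'' you anticipate --- reassembling the exact coefficient $a$ and the denominator of $c_l$ via Lemma 3.3 and the companion identity --- is unnecessary; you only need \emph{some} lower bound at least as large as $c_l$, not that exact expression. It is worth pointing out that the extra step you sketch would not work as described anyway: plugging the upper bound on $\hat{\varepsilon}^{T}X(\truebeta-\hat\beta)$ from Lemma 3.3 into $\norm{\hat\varepsilon}^{2}=\langle\hat\varepsilon,\varepsilon\rangle+\hat{\varepsilon}^{T}X(\truebeta-\hat\beta)$ produces an \emph{upper} bound on $\norm{\hat\varepsilon}$, which cannot help with the lower bound. Fortunately your argument is complete without it, and what your route buys is a cleaner proof with a sharper constant; what the paper's route buys is the specific algebraic form of $c_l$ (with its $(1+\tautwo c_l)^{-1}$-compatible denominator) that is then used verbatim in the definition of $\hat\lambda$ in Theorem 3.1.
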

Based on the above lemmas, we can present the following sharp oracle
inequality on the prediction error.
\begin{theorem}\label{thm:oracle-property}
Let $\delta\in[0,1)$. Under Assumptions \ref{assump-1} and \ref{assump-2}, assume that $\frac{s_{2}-\sqrt{s_{2}^{2}-4s_{1}s_{3}}}{2s_{1}}<\lambda<\frac{s_{2}+\sqrt{s_{2}^{2}-4s_{1}s_{3}}}{2s_{1}}
$
with $s_{1}=\frac{\sigma\lambda_{m}p^{2}(\truebeta)}{\|\varepsilon\|^{2}}$, $s_{2}=1-\frac{\lambda_{m}(3+2\sigma t_{1}+\sigma t_{2})p(\truebeta)}{\|\varepsilon\|}>0$ and $s_{3}=\lambda_{m}(t_{1}+t_{2}+\sigma t_{1}t_{2}+\sigma t_{1}^{2})$.
For any $\hat{\beta}\in\Omega(\tauone,\tautwo)$,
and any $\beta\in \R^n$ such that $\mbox{supp}(\beta)$ is a
subset of $S$, we have that
\begin{eqnarray*}
&&\|X(\hat{\beta}-\truebeta)\|^{2}+2\delta\left((\hat{\lambda}-\tilde{\lambda}_{m})
p^{\Sc}(\hat{\beta}^{\Sc})+(\tilde{\lambda}
+\tilde{\lambda}_{m})p(\hat{\beta}_{S}-\beta)\right)\|\varepsilon\|\\
&&\leq\|X(\beta-\truebeta)\|^{2}+ \Big((1+\delta)(\tilde{\lambda}+\tilde{\lambda}_{m})\Gamma_{p}(L_{S},S)\|\varepsilon\| \Big)^{2}+2\tauone c_{u}\|\hat{\beta}-\truebeta\|\|\beta-\truebeta\|\|\varepsilon\|,
\end{eqnarray*}
where
\begin{eqnarray*}
\hat{\lambda}:=\frac{\lambda c_{l}}{1+\tautwo c_{l}},\ \tilde{\lambda}_{m}:=\lambda_{m}(1+\tauone c_{u}),\ \tilde{\lambda}:=\lambda c_{u},\
L_{S}:=\frac{\tilde{\lambda}+\tilde{\lambda}_{m}}{\hat{\lambda}-\tilde{\lambda}_{m}}\cdot\frac{1+\delta}{1-\delta}.
\end{eqnarray*}
\end{theorem}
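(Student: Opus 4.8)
The plan is to follow the sharp-oracle-inequality machinery of Stucky and van de Geer (as in \cite{Sara2017,Sara2014}), adapted to accommodate the two proximal terms $\frac{\tauone}{2}\norm{\beta}^{2}$ and $\frac{\tautwo}{2}\norm{X\beta-b}^{2}$. First I would write down the first-order optimality condition for the minimizer $\hat{\beta}$. Since Assumption \ref{assump-1} guarantees $\norm{\hat{\varepsilon}}\neq 0$, the square-root loss is differentiable at $\hat{\beta}$, so there exists $\hat{z}\in\partial p(\hat{\beta})$ with $\pstar(\hat{z})\leq 1$ and $\langle\hat{z},\hat{\beta}\rangle=p(\hat{\beta})$ satisfying the stationarity equation
\begin{eqnarray*}
\Big(\tautwo+\tfrac{1}{\norm{\hat{\varepsilon}}}\Big)X^{T}\hat{\varepsilon}=\lambda\hat{z}+\tauone\hat{\beta}.
\end{eqnarray*}
Taking the inner product with $\hat{\beta}-\beta$ for the test vector $\beta$ supported on $S$ yields the basic inequality; on the left the term $\langle\hat{\varepsilon},X(\hat{\beta}-\beta)\rangle$ is reorganized, using $X(\hat{\beta}-\truebeta)=\varepsilon-\hat{\varepsilon}$, into the target prediction error plus controllable noise pieces, while on the right I use the exact split $\langle\hat{z},\hat{\beta}-\beta\rangle=p(\hat{\beta})-\langle\hat{z},\beta\rangle$ and the polarization identity $\langle\hat{\beta},\hat{\beta}-\beta\rangle=\tfrac12(\norm{\hat{\beta}}^{2}-\norm{\beta}^{2}+\norm{\hat{\beta}-\beta}^{2})$.

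Second, I would control the noise contribution. The curvature of the square-root loss enters through the Bregman-type margin $\norm{X\beta-b}-\norm{\hat{\varepsilon}}^{-1}\langle\hat{\varepsilon},X\beta-b\rangle$, which is nonnegative by Cauchy--Schwarz and, after normalization, supplies a genuine quadratic term $\norm{X(\hat{\beta}-\beta)}^{2}$ up to the factors $\norm{\hat{\varepsilon}}$ and $\norm{\varepsilon}$. All empirical-process terms of the form $\hat{\varepsilon}^{T}X(\cdot)/\norm{\hat{\varepsilon}}$ and $\varepsilon^{T}X(\cdot)/\norm{\varepsilon}$ are bounded by the dual norm using $\pstar(\varepsilon^{T}X)=\lambda_{0}\norm{\varepsilon}$, the comparisons $\lambda_{0}\leq\lambda_{m}$ and $\pstar(\truebeta)\leq\lambda_{m}$ (Lemma 3.1), the upper bound on $\hat{\varepsilon}^{T}X(\truebeta-\hat{\beta})$ (Lemma 3.3), and the two-sided bound $c_{l}\leq\norm{\hat{\varepsilon}}/\norm{\varepsilon}\leq c_{u}$ (Lemma 3.4). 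These are exactly the ingredients that convert the raw constants $\lambda,\lambda_{m},\tauone,\tautwo$ into the rescaled thresholds $\hat{\lambda},\tilde{\lambda},\tilde{\lambda}_{m}$ appearing in the statement.

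Third, I would invoke the weak decomposability of $p$ (Assumption \ref{assump-2}) to write $p(\hat{\beta})\geq p(\hat{\beta}_{S})+p^{\Sc}(\hat{\beta}^{\Sc})$ and, combined with the triangle inequality and the noise bounds above, rearrange the penalty differences so that the off-support mass $p^{\Sc}(\hat{\beta}^{\Sc})$ is dominated by a multiple of the on-support mass $p(\hat{\beta}_{S}-\beta)$. This shows that $\hat{\beta}-\beta$ satisfies an $(L_{S},S)$-cone condition with precisely the constant $L_{S}$ in the statement, after which Lemma \ref{lemma:sara} gives $p(\hat{\beta}_{S}-\beta)\leq\Gamma_{p}(L_{S},S)\norm{X(\hat{\beta}-\beta)}$. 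Substituting this effective-sparsity bound and applying Young's inequality $2ab\leq a^{2}+b^{2}$ to absorb the linear-in-$\norm{X(\hat{\beta}-\beta)}$ term into the quadratic margin leaves the residual $\big((1+\delta)(\tilde{\lambda}+\tilde{\lambda}_{m})\Gamma_{p}(L_{S},S)\norm{\varepsilon}\big)^{2}$; the ridge cross term is handled by Cauchy--Schwarz together with $\norm{\hat{\varepsilon}}\leq c_{u}\norm{\varepsilon}$, producing $2\tauone c_{u}\norm{\hat{\beta}-\truebeta}\norm{\beta-\truebeta}\norm{\varepsilon}$. Finally, converting $\norm{X(\hat{\beta}-\beta)}^{2}$ into $\norm{X(\hat{\beta}-\truebeta)}^{2}$ and $\norm{X(\beta-\truebeta)}^{2}$ via $X(\hat{\beta}-\truebeta)=\varepsilon-\hat{\varepsilon}$ assembles the claimed inequality.

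I expect the main obstacle to be the simultaneous bookkeeping of the square-root normalization and the self-referential choice of $L_{S}$: the cone condition must close up consistently, which forces $L_{S}$ to take exactly the stated form and requires the positivity $\hat{\lambda}-\tilde{\lambda}_{m}>0$. Ensuring this positivity, together with $s_{2}>0$, is precisely what the admissible window $\frac{s_{2}-\sqrt{s_{2}^{2}-4s_{1}s_{3}}}{2s_{1}}<\lambda<\frac{s_{2}+\sqrt{s_{2}^{2}-4s_{1}s_{3}}}{2s_{1}}$ encodes; deriving this quadratic constraint in $\lambda$ from the requirement that the noise and curvature terms balance is the delicate quantitative heart of the argument, and the presence of both $\tauone$ and $\tautwo$ (absent in the purely convex $q\equiv 0$ case) is what complicates every constant along the way.
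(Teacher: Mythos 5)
Your proposal follows essentially the same route as the paper's proof: the first-order optimality condition for $\hat{\beta}$, dual-norm bounds on the noise terms via Lemmas 3.1, 3.3 and 3.4 (which produce the rescaled constants $\hat{\lambda},\tilde{\lambda},\tilde{\lambda}_{m}$), weak decomposability to extract a cone condition with constant $L_{S}$, the effective-sparsity bound of Lemma \ref{lemma:sara}, and Young's inequality plus the polarization identity to assemble the final inequality. The one ingredient you omit is the dichotomy the paper opens with: the cone condition on $\hat{\beta}-\beta$ can only be derived when $\langle X(\hat{\beta}-\truebeta),X(\hat{\beta}-\beta)\rangle$ is bounded below by $-\delta\left((\tilde{\lambda}+\tilde{\lambda}_{m})p(\hat{\beta}_{S}-\beta)+(\hat{\lambda}-\tilde{\lambda}_{m})p^{\Sc}(\hat{\beta}^{\Sc})\right)\|\varepsilon\|+\tauone c_{u}\|\hat{\beta}-\truebeta\|\|\beta-\truebeta\|\|\varepsilon\|$, while in the complementary case the claimed oracle inequality follows directly from the identity $2\langle X(\hat{\beta}-\truebeta),X(\hat{\beta}-\beta)\rangle=\|X(\hat{\beta}-\truebeta)\|^{2}-\|X(\beta-\truebeta)\|^{2}+\|X(\hat{\beta}-\beta)\|^{2}$ without any cone or compatibility argument, so your assertion that $\hat{\beta}-\beta$ satisfies the $(L_{S},S)$-cone condition needs this case split to be justified.
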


An important special case of Theorem \ref{thm:oracle-property} is to choose $\beta=\truebeta$ with ${\rm supp}(\truebeta) \subseteq S$ allowed. Then only the $p$-effective sparsity term $\Gamma_p(L_S,S)$ appears in the upper bound.
\begin{remark}
Since $\lim\limits_{\sigma\downarrow 0}\left(s_{2}^{2}-4s_{1}s_{3}\right)=\left(1-\frac{3\lambda_{m}p(\truebeta)}{\|\varepsilon\|}\right)^{2}>0$, we can find some $\tilde{\sigma}>0$ such that $s_{2}^{2}-4s_{1}s_{3}>0$ holds if $\sigma<\tilde{\sigma}$. Theorem 3.1 is nearly the same as that in \cite{Sara2017} due to $\lim\limits_{\sigma\downarrow 0,\tau\downarrow0}\frac{s_{2}-\sqrt{s_{2}^{2}-4s_{1}s_{3}}}{2s_{1}}=\frac{3\lambda_{m}\|\varepsilon\|}{\|\varepsilon\|-3\lambda_{m}p(\truebeta)}$ and $\lim\limits_{\sigma\downarrow 0,\tau\downarrow0}\frac{s_{2}+\sqrt{s_{2}^{2}-4s_{1}s_{3}}}{2s_{1}}=+\infty$ with a different definition of $\lambda_{m}$.
\end{remark}
\begin{remark}
From Theorem 3.1 we can see that the upper bound is related to some random parts $\lambda_{m}$ and $\|\varepsilon\|$. If we have Gaussian errors $\varepsilon\sim\mathcal{N}(0,\varsigma^{2}I)$, then we know from  Proposition 11 of \cite{Sara2017} that there exists an upper bound $\lambda^{u}_{m}$ of $\lambda_{m}$ such that $\lambda_{m}\leq\lambda_{m}^{u}$ is valid with probability $1-\alpha$ for a given constant $\alpha$. Furthermore, it follows from \cite{LaurentMassart2000} that we can find an upper bound $c_{1}\varsigma$ and a lower bound $c_{2}\varsigma$ for $\|\epsilon\|$ with a high probability. That is, if $\lambda_{m}$ is replaced by $\lambda_{m}^{u}$ and $\|\epsilon\|$ is replaced by $c_{1}\varsigma$ or $c_{2}\varsigma$, then the sharp oracle bound with the Gaussian errors holds with a high probability.
\end{remark}

%%%%%%%%%%%%%%%%%%%%%%%%%%%%%%%%
\section{The proximal majorization-minimization algorithm}
\label{sec:two-stage algorithm}
To deal with the  nonconvexity of the  regularization function in the square-root regression problem (\ref{eq:prob}), we design a two stage proximal majorization-minimization (PMM) algorithm and solve a series of convex subproblems. In stage I, we first solve a problem
 by removing $\ptwo$ from the original problem and adding
appropriate proximal terms, to obtain an initial point
to warm-start the algorithm in the second stage.
In stage I\!I, a series of majorized subproblems are
solved to obtain a solution point.

The basic idea of the PMM algorithm is to linearize the concave term $-q(\beta)$ in the objective
function of \eqref{eq:prob} at each iteration with respect to the current iterate, say
$\tilde{\beta}$. By doing so, the subproblem in each iteration is a convex minimization
problem, which must be solved efficiently in order for the overall algorithm to
be efficient. However, the objective function of the resulting subproblem contains the sum of
two nonsmooth terms ($\norm{X\beta-b}$ and $p(\beta)$), and it is not obvious how such a problem can be solved efficiently.
One important step we take in this paper is to add the proximal
term $\frac{\tautwo}{2}\norm{X\beta- X\tilde{\beta}}^2$ to the objective function
of the subproblem. Through this novel proximal term, the dual of the majorized
 subproblem can then be written explicitly  as an unconstrained optimization problem.
Moreover, its structure is highly conducive for one to
 apply the semismooth Newton (SSN) method to compute an approximate solution
 via  solving a nonlinear system of equations.

%%%%%%%%%%%%%%%%%%%
\subsection{A semismooth Newton method for the subproblems}

For the purpose of our algorithm developments, given $\tauone>0$, $\tautwo>0$, $\tilde{\beta}\in\R^{n}$, $\tilde{v}\in\R^{n}$, and $\tilde{b}\in\R^{m}$, we consider the following minimization problem:
\begin{eqnarray}\label{primal-subproblem}
\min_{\beta\in\R^{n}}\Big\{h(\beta;\tauone,\tautwo,\tilde{\beta},\tilde{v},\tilde{b})&:=&\|X\beta-b\|+\lambda \pone(\beta)-\ptwo(\tilde{\beta})-\langle\tilde{v},\beta-\tilde{\beta}\rangle\nonumber\\
&&+\frac{\tauone}{2}\|\beta-\tilde{\beta}\|^{2}
+\frac{\tautwo}{2}\|X\beta-\tilde{b}\|^{2}\Big\}.
\end{eqnarray}
The optimization problem (\ref{primal-subproblem}) is equivalent to
\begin{eqnarray}\label{primal-subproblem-1}
\min_{\beta\in\R^{n},y\in\R^{m}}\left\{\left.\|y\|+\lambda \pone(\beta)-\langle\tilde{v},\beta-\tilde{\beta}\rangle+\frac{\tauone}{2}\|\beta-\tilde{\beta}\|^{2}+\frac{\tautwo}{2}\|y+b-\tilde{b}\|^{2}\ \right|\ X\beta-y=b\right\}.
\end{eqnarray}
The dual of  problem (\ref{primal-subproblem-1}) admits the following equivalent minimization form:
\begin{eqnarray}\label{dual-subproblem}
&&\min_{u\in\R^{m}}\Big\{\varphi(u):=  \langle u,b\rangle
+ \frac{\tautwo}{2}\norm{\tautwo^{-1}u+\tilde{b}-b}^{2}
-\|\mbox{Prox}_{\tautwo^{-1}\|\cdot\|}(\tautwo^{-1}u+\tilde{b}-b)\|\\
&&\quad\quad\
-\frac{1}{2\tautwo}\|\mbox{Prox}_{\tautwo\delta_{B}}(u +\tautwo(\tilde{b}-b))\|^{2}
+\frac{\tauone}{2}\|\tilde{\beta}+\tauone^{-1}(\tilde{v}-X^{T}u)\|^{2}\nonumber\\
&&\quad\quad\ -\lambda\, \pone\left(\mbox{Prox}_{\tauone^{-1}\lambda \pone}\big(
\tilde{\beta}+\tauone^{-1}(\tilde{v}-X^{T}u)\big)\right)-\frac{1}{2\tauone}\|\mbox{Prox}_{\tauone(\lambda \pone)^{*}}(\tauone\tilde{\beta}+\tilde{v}-X^{T}u)\|^{2} \Big\},\nonumber
\end{eqnarray}
where $B=\left\{x\ |\ \|x\|\leq 1\right\}$.
Let $\bar{u}:=\mathop{\mbox{argmin}}\limits_{u\in\R^{m}}\varphi(u)$. Then  the optimal solutions $\bar{y},\bar{\beta}$ to the primal problem (\ref{primal-subproblem-1}) can be computed by
\begin{eqnarray*}
\bar{y}=\mbox{Prox}_{\tautwo^{-1}\|\cdot\|}(\tautwo^{-1}\bar{u}+\tilde{b}-b),\quad \bar{\beta}=\mbox{Prox}_{\tauone^{-1}\lambda \pone}\big(\tilde{\beta}
+\tauone^{-1}(\tilde{v}-X^{T}\bar{u})\big).
\end{eqnarray*}
Here we should emphasize  the novelty
in adding the proximal term $\frac{\tautwo}{2}\|X\beta-\tilde{b}\|^{2}$ in
\eqref{primal-subproblem}. Without this term, the objective function in the dual problem
\eqref{dual-subproblem} does not admit an analytical expression. As the reader may
observe later in the next paragraph, without the analytical expression given in \eqref{dual-subproblem}, the
algorithmic development in the rest of this subsection would break down. As a result,
designing the PMM algorithm in the next subsection for
solving \eqref{eq:prob} would also become impossible.

By Moreau's identity (\ref{Moreau-identity}) and the differentiability of the Moreau envelope functions of $\|\cdot\|$ and $\lambda p$, we know that the function
$\varphi$ is convex and continuously differentiable and
\begin{eqnarray*}
\nabla\varphi(u)=\mbox{Prox}_{\tautwo^{-1}\|\cdot\|}(\tautwo^{-1}u+\tilde{b}-b)-X\mbox{Prox}_{\tauone^{-1}\lambda \pone}\big(\tilde{\beta}
+\tauone^{-1}(\tilde{v}-X^{T}u)\big)+b.
\end{eqnarray*}
Thus $\bar{u}$ can be obtained via solving the following nonlinear system of equations:
\begin{eqnarray}\label{dual-subproblem-nonlinear-equation}
\nabla\varphi(u)=0.
\end{eqnarray}
In the rest of this subsection, we will discuss how we can
 apply the SSN method to compute an approximate solution of (\ref{dual-subproblem-nonlinear-equation}) efficiently. Since the mappings $\mbox{Prox}_{\tauone^{-1}\|\cdot\|}(\cdot)$ and
$\mbox{Prox}_{\tautwo^{-1}\lambda \pone}(\cdot)$ are Lipschitz
continuous, the following multifunction is well defined:
\begin{eqnarray*}
\hat{\partial}^{2}\varphi(u):=\tauone^{-1}X\partial\mbox{Prox}_{\tauone^{-1}\lambda
\pone}(\tilde{\beta}+\tauone^{-1}(\tilde{v}-X^{T}u))X^{T}+\tautwo^{-1}\partial\mbox{Prox}_{\tautwo^{-1}\|\cdot\|}(\tautwo^{-1}u+\tilde{b}-b).
\end{eqnarray*}
%We know from \cite[p.75]{Clarke1983} and \cite[Example
%2.5]{JBHU} that
%%We know from Proposition 2.3.3 and Theorem 2.6.6 of
%%\cite{Clarke1983} that
%\begin{eqnarray*}
%\partial^{2}\varphi(u)(d) = \hat{\partial}^{2}\varphi(u)(d),
%\quad
%\forall\, d\in\R^{m},
%\end{eqnarray*}
%where $\partial^{2}\varphi(u)$ is the Clarke generalized
%Hessian of $\varphi$ at $u$.
 Let
$U\in\partial\mbox{Prox}_{\tauone^{-1}\lambda
\pone}(\tilde{\beta}+\tauone^{-1}(\tilde{v}-X^{T}u))$ and
$V\in\partial\mbox{Prox}_{\tautwo^{-1}\|\cdot\|}(\tautwo^{-1}u+\tilde{b}-b)$. Then we have
$H:=\tauone^{-1}XUX^{T}+\tautwo^{-1}V\in\hat{\partial}^{2}\varphi(u)$.
The following proposition demonstrates that $H$
is nonsingular at the solution point that does not over fit, which, under  Assumption \ref{assump-1},  holds automatically  when it is close to $\hat \beta$.
This property is crucial to ensure the fast convergence of the
SSN method  for computing an approximate solution of \eqref{dual-subproblem}.

\begin{proposition}\label{prop;4.1}
%Suppose that Assumption \ref{assump-1} is valid.
Suppose that the unique optimal solution $\bar \beta $ to the problem (\ref{primal-subproblem}) satisfies $\|X\bar{\beta}-b\|\neq 0$. Then
all the elements of
$\hat{\partial}^{2}\varphi(\bar{u})$ are  positive
definite.
\end{proposition}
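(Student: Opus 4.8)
The plan is to show that an arbitrary element $H=\tauone^{-1}XUX^{T}+\tautwo^{-1}V\in\hat{\partial}^{2}\varphi(\bar u)$ is positive definite by showing that both summands are positive semidefinite and that their null spaces intersect only at the origin. First I would recall that $U\in\partial\,\mbox{Prox}_{\tauone^{-1}\lambda \pone}(\cdot)$ and $V\in\partial\,\mbox{Prox}_{\tautwo^{-1}\|\cdot\|}(\cdot)$. Since $\tauone^{-1}\lambda\pone$ and $\tautwo^{-1}\|\cdot\|$ are proper closed convex functions, their proximal mappings are firmly nonexpansive (in fact each is the proximal mapping of a convex function), and it is a standard fact that every element of the Clarke/limiting subdifferential of a firmly nonexpansive mapping is symmetric positive semidefinite with $0\preceq U\preceq I$ and $0\preceq V\preceq I$. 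Consequently $\tauone^{-1}XUX^{T}\succeq 0$ and $\tautwo^{-1}V\succeq 0$, so $H\succeq 0$ and it remains only to rule out a nonzero vector $d$ with $d^{T}Hd=0$.

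Next I would use the decomposition $d^{T}Hd=\tauone^{-1}(X^{T}d)^{T}U(X^{T}d)+\tautwo^{-1}d^{T}Vd$. Because both terms are nonnegative, $d^{T}Hd=0$ forces $(X^{T}d)^{T}U(X^{T}d)=0$ and $d^{T}Vd=0$ simultaneously. The key step is to analyze $V$ at the point $\tautwo^{-1}\bar u+\tilde b-b$. By Moreau's identity \eqref{Moreau-identity}, $\mbox{Prox}_{\tautwo^{-1}\|\cdot\|}=I-\mbox{Prox}_{\tautwo\delta_{B}}$, where $B$ is the unit ball, so understanding $V$ reduces to understanding the generalized Jacobian of the projection onto $B$. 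The crucial hypothesis $\|X\bar\beta-b\|\neq 0$ ensures that the relevant residual $\bar y=X\bar\beta-b$ (recovered from the optimality relations $\bar y=\mbox{Prox}_{\tautwo^{-1}\|\cdot\|}(\tautwo^{-1}\bar u+\tilde b-b)$) is nonzero, meaning the argument of the projection lies strictly outside the unit ball. At such a point the projection onto $B$ is smooth with Jacobian $\frac{1}{\|w\|}(I-\frac{ww^{T}}{\|w\|^{2}})$, which is singular only along the radial direction $w$; correspondingly $V=I-(\text{that Jacobian})$ is positive definite on the radial direction and the unique element of $\partial\mbox{Prox}_{\tautwo^{-1}\|\cdot\|}$ is itself positive definite with a one-dimensional null space, or in fact has trivial null space. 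The point is that $d^{T}Vd=0$ then forces $d=0$.

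Thus I expect the argument to conclude: positive semidefiniteness of each block handles the ``$\succeq 0$'' part routinely, and the non-overfitting hypothesis $\|X\bar\beta-b\|\neq 0$ is exactly what makes $V$ strictly positive definite, which in turn forces any $d$ with $d^{T}Hd=0$ to vanish, giving $H\succ 0$. The main obstacle will be the careful computation of the generalized Jacobian $V$ of $\mbox{Prox}_{\tautwo^{-1}\|\cdot\|}$ at a point with nonzero image: one must verify, via Moreau's identity and the explicit formula for the projection onto the Euclidean ball, that when $\|X\bar\beta-b\|\neq 0$ the matrix $V$ is genuinely positive definite (not merely positive semidefinite with a nontrivial kernel). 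A secondary technical point is to confirm that the optimality conditions relating $\bar u$ to $\bar\beta$ and $\bar y$ indeed identify $\mbox{Prox}_{\tautwo^{-1}\|\cdot\|}(\tautwo^{-1}\bar u+\tilde b-b)$ with the nonzero residual, so that the hypothesis transfers correctly to the argument of $V$.
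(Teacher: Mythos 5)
Your proposal is correct and follows essentially the same route as the paper's proof: the hypothesis $\|X\bar\beta-b\|\neq 0$ forces the argument $\tilde u=\tautwo^{-1}\bar u+\tilde b-b$ to satisfy $\|\tilde u\|>\tautwo^{-1}$, so $\mbox{Prox}_{\tautwo^{-1}\|\cdot\|}$ is differentiable there with the positive definite Jacobian $V=\bigl(1-\frac{1}{\tautwo\|\tilde u\|}\bigr)I_{m}+\frac{\tilde u\tilde u^{T}}{\tautwo\|\tilde u\|^{3}}$, and adding the positive semidefinite term $\tauone^{-1}XUX^{T}$ preserves positive definiteness. Two small points to tidy up when writing it out: Moreau's identity carries a scaling, $\mbox{Prox}_{\tautwo^{-1}\|\cdot\|}(x)=x-\tautwo^{-1}\Pi_{B}(\tautwo x)$, and $V$ has eigenvalue $1$ along $\tilde u$ and $1-\frac{1}{\tautwo\|\tilde u\|}>0$ on the orthogonal complement, so its null space is trivial (your phrase ``positive definite with a one-dimensional null space'' should be dropped).
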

\begin{proof}
By the assumption,
$\bar{y}=X\bar{\beta}-b\neq0$. Furthermore,
$\bar{y}=\mbox{Prox}_{\tautwo^{-1}\|\cdot\|}(\tilde{u})=\tilde{u}-\tautwo^{-1}\Pi_{B}(\tautwo\tilde{u})$,
where $\tilde{u}=\tautwo^{-1}\bar{u}+\tilde{b}-b$, $\Pi_{B}$ is the
Euclidean projection operator onto $B$.
Since $\bar{y}\not=0$, it follows that $\|\tilde{u}\|>\frac{1}{\tautwo}$ and
$\mbox{Prox}_{\tautwo^{-1}\|\cdot\|}(\tilde{u})$ is
differentiable with
\begin{eqnarray*}
V:=\nabla\mbox{Prox}_{\tautwo^{-1}\|\cdot\|}(\tilde{u})
=\left(1-\frac{1}{\tautwo\|\tilde{u}\|}\right)\mathcal{I}_{m}+\frac{\tilde{u}\tilde{u}^{T}}{\tautwo\|\tilde{u}\|^{3}}.
\end{eqnarray*}
Hence for any $U\in\partial\mbox{Prox}_{\tauone^{-1}\lambda
\pone}(\tilde{\beta}+\tauone^{-1}(\tilde{v}-X^{T}\bar{u}))$,
$H=\tauone^{-1}XUX^{T}+\tautwo^{-1}V\in\hat\partial^{2}\varphi(\bar{u})$.
Since $V$ is positive definite and $XUX^{T}$ is positive
semidefinite, $H$ is positive definite. This completes the proof.
\end{proof}
Now we discuss how to apply the SSN
method to solve the nonsmooth equation
(\ref{dual-subproblem-nonlinear-equation}) to obtain an approximate solution
efficiently. We first prove
that $\nabla\varphi$ is strongly semismooth.
\begin{proposition}\label{prop4.2}
The function $\nabla\varphi$ is strongly semismooth.
\end{proposition}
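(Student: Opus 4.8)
The plan is to exploit the closed-form expression
\[
\nabla\varphi(u)=\mbox{Prox}_{\tautwo^{-1}\|\cdot\|}(\tautwo^{-1}u+\tilde{b}-b)-X\,\mbox{Prox}_{\tauone^{-1}\lambda \pone}\big(\tilde{\beta}+\tauone^{-1}(\tilde{v}-X^{T}u)\big)+b
\]
and to reduce the claim to the strong semismoothness of the two proximal mappings appearing in it, using the standard calculus of (strong) semismoothness. Recall that affine maps are smooth, hence strongly semismooth; that a constant matrix times a strongly semismooth map is strongly semismooth; and that both the composition (with compatible dimensions) and the finite sum of strongly semismooth maps are again strongly semismooth (see, e.g., \cite{QiSun1993}). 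Since the inner arguments $u\mapsto\tautwo^{-1}u+\tilde{b}-b$ and $u\mapsto\tilde{\beta}+\tauone^{-1}(\tilde{v}-X^{T}u)$ are affine and the additive term $b$ is constant, it therefore suffices to prove that (i) $\mbox{Prox}_{\tautwo^{-1}\|\cdot\|}$ and (ii) $\mbox{Prox}_{\tauone^{-1}\lambda\pone}$ are strongly semismooth on $\R^{m}$ and $\R^{n}$, respectively; the result for $\nabla\varphi$ then follows by assembling these through the calculus rules.

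For (ii), I would invoke the standing assumption that the proximal mapping of the norm $\pone$ is strongly semismooth. To pass from $\mbox{Prox}_{\pone}$ to $\mbox{Prox}_{c\pone}$ with $c:=\tauone^{-1}\lambda>0$, I use the scaling identity $\mbox{Prox}_{c\pone}(x)=c\,\mbox{Prox}_{\pone}(x/c)$, which holds for any norm because $\pone$ is positively homogeneous of degree one; it follows at once from Moreau's decomposition (\ref{Moreau-identity}) together with the fact that $(c\pone)^{*}$ is the indicator of the scaled dual-norm ball. As $x\mapsto x/c$ and $y\mapsto cy$ are linear, $\mbox{Prox}_{c\pone}$ is then a composition of strongly semismooth maps and is therefore strongly semismooth.

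For (i), I would use the explicit formula for the proximal mapping of the Euclidean norm, namely $\mbox{Prox}_{t\|\cdot\|}(x)=x-\Pi_{B(0,t)}(x)=\max\{1-t/\|x\|,\,0\}\,x$ with $t=\tautwo^{-1}$, where $\Pi_{B(0,t)}$ is the metric projection onto the Euclidean ball of radius $t$. Away from the sphere $\|x\|=t$ this map is real-analytic on each of the two open regions $\{\,\|x\|<t\,\}$ and $\{\,\|x\|>t\,\}$, so the only delicate point is the behaviour across $\|x\|=t$. I would settle it either by verifying the strong-semismoothness estimate there directly from the definition, or by citing the known strong semismoothness of the metric projection onto the Euclidean ball (equivalently, the second-order cone) \cite{Meng2005}.

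The main obstacle is precisely step (i) at the kink $\|x\|=\tautwo^{-1}$: one must confirm the second-order error bound $F(v+\Delta v)-F(v)-\Gamma\Delta v=O(\|\Delta v\|^{2})$ for every $\Gamma$ in the generalized Jacobian as the argument crosses the sphere, where the two smooth representations meet. Once this boundary estimate (and the assumed property underlying (ii)) is secured, the remaining composition, summation and linear-map arguments are routine and introduce no further difficulty.
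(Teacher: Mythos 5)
Your proposal is correct and follows essentially the same route as the paper: reduce the claim via the semismoothness calculus for affine compositions and sums to the strong semismoothness of $\mbox{Prox}_{\tautwo^{-1}\|\cdot\|}$ and $\mbox{Prox}_{\tauone^{-1}\lambda \pone}$, handle the latter by the standing assumption on $\mbox{Prox}_{\pone}$, and handle the former via the known strong semismoothness of the projection onto the second-order cone (equivalently, the Euclidean ball) as in \cite{Chen2003,Meng2005}. You supply more detail than the paper does on the scaling identity $\mbox{Prox}_{c\pone}(x)=c\,\mbox{Prox}_{\pone}(x/c)$ and on the assembly steps (which the paper dismisses as ``easy''), but the decomposition and the key citations are the same.
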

\begin{proof}
Firstly, we have assumed that the proximal operator
$\mbox{Prox}_{\pone}(\cdot)$ is strongly semismooth. Secondly, by  Proposition 4.3
of \cite{Chen2003}, it is known that the projection operator onto the second order cone is
strongly semismooth. The strongly semismoothness of
the proximal operator $\mbox{Prox}_{\|\cdot\|}(\cdot)$
then follows from Theorem 4
of \cite{Meng2005}, which states that if the projection onto the second order cone
is strongly semismooth, then so is the proximal mapping of $\|\cdot\|$. From here, it is easy to prove the required result and we omit the details.
\end{proof}

Now we can apply the SSN method to solve
(\ref{dual-subproblem-nonlinear-equation}) as follows.
\bigskip

\ni\fbox{\parbox{\textwidth}{\noindent{\bf Algorithm
SSN($\tauone,\tautwo,\tilde{\beta},\tilde{v},\tilde{b}$)
with input  $\tauone>0, \tautwo>0,\tilde{\beta},\tilde{v}\in\R^{n},\tilde{b}\in\R^{m}$.} \label{alg:Newton-CG1}
 Choose
$\mu\in(0,\frac{1}{2}), \ \overline{\eta}\in(0,1),\ \varrho\in(0,1],\ \delta\in(0,1)$,
and $u^{0}\in\R^{m}$. For $j=0,1,\ldots,$ iterate the following steps:
\begin{description}
\item [Step 1.] Choose $U^{j}\in\partial\mbox{Prox}_{\tauone^{-1}\lambda \pone}
(\tilde{\beta}+\tauone^{-1}(\tilde{v}-X^{T}u^{j}))$
and
$V^{j}\in\partial\mbox{Prox}_{\tautwo^{-1}\|\cdot\|}(\tautwo^{-1}u^{j}+\tilde{b}-b)$.
Let $H^{j}=\tauone^{-1}XU^{j}X^{T}+\tautwo^{-1}V^{j}$.
Compute an approximate solution $\Delta u^{j}$ to the linear system
\begin{eqnarray*}
H^{j}\Delta
u=-\nabla\varphi(u^{j})
\end{eqnarray*}
such that
\begin{eqnarray}
\|H^{j}\Delta u^{j}+\nabla\varphi(u^{j})\|\leq
\min\{\overline{\eta},\|\nabla\varphi(u^{j})\|^{1+\varrho}\}.\label{ineq:stopping
criterion}
\end{eqnarray}
\item [Step 2.]  Set $\alpha_{j}=\delta^{t_{j}}$, where $t_{j}$ is the first nonnegative integer $t$ such that
\begin{eqnarray*}
\varphi(u^{j}+\delta^{t}\Delta u^{j})\leq
\varphi(u^{j})+\mu\delta^{t}\langle\nabla
 \varphi(u^{j}),\Delta u^{j}\rangle.&
\end{eqnarray*}
\item [Step 3.]  Set $u^{j+1}=u^{j}+\alpha_{j}\Delta u^{j}$.
\end{description}}}

\bigskip

With Propositions \ref{prop;4.1} and  \ref{prop4.2}, the SSN method can be proven to be globally convergent and locally
superlinearly convergent. One may see Theorem 3.6 of \cite{LiSunToh2018} for the
details. The local convergence rate for Algorithm SSN is stated in the
next theorem without proof.
\begin{theorem}
Suppose that $\|X\bar{\beta}-b\|\neq 0$ holds.
Then the sequence $\{u^{j}\}$   generated by Algorithm SSN
  converges to the unique optimal solution
$\overline{u}$ of the problem (\ref{dual-subproblem}) and
\begin{eqnarray*}
\|u^{j+1}-\overline{u}\| &=&
\mathcal{O}(\|u^{j}-\overline{u}\|)^{1+\varrho}.
\end{eqnarray*}
\end{theorem}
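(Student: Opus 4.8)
The plan is to recognize this as the standard local superlinear convergence theorem for an inexact semismooth Newton method globalized by an Armijo line search, and to run the argument that underlies Theorem 3.6 of \cite{LiSunToh2018}; the two preceding propositions were proved precisely to supply its two hypotheses. From Proposition \ref{prop4.2}, $\nabla\varphi$ is strongly semismooth with respect to the multifunction $\hat{\partial}^{2}\varphi$, so that every selection $H^{j}\in\hat{\partial}^{2}\varphi(u^{j})$ satisfies the order-one estimate $\|\nabla\varphi(u^{j})-\nabla\varphi(\bar{u})-H^{j}(u^{j}-\bar{u})\|=\mathcal{O}(\|u^{j}-\bar{u}\|^{2})$. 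From Proposition \ref{prop;4.1}, under the hypothesis $\|X\bar{\beta}-b\|\neq 0$ every element of $\hat{\partial}^{2}\varphi(\bar{u})$ is positive definite. These are exactly the semismoothness and nonsingularity inputs that the convergence machinery requires.

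First I would establish that the \emph{whole} sequence converges to $\bar{u}$. The function $\varphi$ is convex and continuously differentiable, and the quadratic term $\frac{\tautwo}{2}\norm{\tautwo^{-1}u+\tilde{b}-b}^{2}$ makes it coercive, so the level set $\{u:\varphi(u)\leq\varphi(u^{0})\}$ is compact; since the line search in Step 2 forces $\varphi(u^{j+1})\leq\varphi(u^{j})$, the iterates remain in this compact set. A standard descent argument then shows that every accumulation point is a zero of $\nabla\varphi$, hence a minimizer of the convex $\varphi$; the positive definiteness of $\hat{\partial}^{2}\varphi(\bar{u})$ renders $\bar{u}$ an isolated local minimizer, hence the unique global one, so $u^{j}\to\bar{u}$. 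Next, upper semicontinuity and compact-valuedness of $\hat{\partial}^{2}\varphi$ combined with Proposition \ref{prop;4.1} yield a neighborhood $\mathcal{N}$ of $\bar{u}$ and a constant $\kappa$ such that every $H\in\hat{\partial}^{2}\varphi(u)$ with $u\in\mathcal{N}$ is nonsingular with $\|H^{-1}\|\leq\kappa$.

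Once $u^{j}\in\mathcal{N}$, I would argue that the unit step $\alpha_{j}=1$ is eventually accepted. Writing the inexact Newton direction as $H^{j}\Delta u^{j}=-\nabla\varphi(u^{j})+r^{j}$ with $\|r^{j}\|\leq\|\nabla\varphi(u^{j})\|^{1+\varrho}$ from (\ref{ineq:stopping criterion}), the semismoothness estimate gives $\|u^{j}+\Delta u^{j}-\bar{u}\|=o(\|u^{j}-\bar{u}\|)$, so the direction is asymptotically Newton and a routine computation shows the Armijo inequality with $\mu<\frac{1}{2}$ holds at $t=0$ for large $j$. Using $\nabla\varphi(\bar{u})=0$ I would then write
\[
u^{j+1}-\bar{u}=(H^{j})^{-1}\bigl[H^{j}(u^{j}-\bar{u})-\bigl(\nabla\varphi(u^{j})-\nabla\varphi(\bar{u})\bigr)\bigr]+(H^{j})^{-1}r^{j},
\]
bound the bracket by the strong-semismoothness estimate $\mathcal{O}(\|u^{j}-\bar{u}\|^{2})$, bound the residual by $\|r^{j}\|\leq\|\nabla\varphi(u^{j})\|^{1+\varrho}=\mathcal{O}(\|u^{j}-\bar{u}\|^{1+\varrho})$ via Lipschitz continuity of $\nabla\varphi$, and use $\|(H^{j})^{-1}\|\leq\kappa$, to conclude
\[
\|u^{j+1}-\bar{u}\|\leq\kappa\Bigl(\mathcal{O}(\|u^{j}-\bar{u}\|^{2})+\mathcal{O}(\|u^{j}-\bar{u}\|^{1+\varrho})\Bigr)=\mathcal{O}(\|u^{j}-\bar{u}\|^{1+\varrho}),
\]
since $\varrho\in(0,1]$, which is the asserted rate.

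The hard part is not the final recursion but the two softer ingredients. Establishing the uniform bound $\|H^{-1}\|\leq\kappa$ on a neighborhood rests on the upper semicontinuity of $\hat{\partial}^{2}\varphi$ and a compactness argument showing that positive definiteness at $\bar{u}$ persists nearby; and proving that the line search eventually accepts the unit step is what converts the globalized scheme into a pure inexact Newton iteration to which the recursion applies. One must also confirm coercivity of $\varphi$ to guarantee boundedness of the minimizing sequence, and verify that $\hat{\partial}^{2}\varphi$ is a legitimate semismoothness surrogate for $\nabla\varphi$, so that the $\mathcal{O}(\|u^{j}-\bar{u}\|^{2})$ estimate genuinely applies to the specific selections $H^{j}$ chosen in Step 1. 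These technical points are precisely what is carried out in \cite{LiSunToh2018}, which is why the theorem is stated here without proof.
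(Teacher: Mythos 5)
Your proposal is correct and follows essentially the same route as the paper, which states this theorem without proof and defers precisely to the standard inexact semismooth Newton convergence machinery of Theorem 3.6 in \cite{LiSunToh2018}: strong semismoothness of $\nabla\varphi$ (Proposition \ref{prop4.2}) plus positive definiteness of $\hat{\partial}^{2}\varphi(\bar{u})$ (Proposition \ref{prop;4.1}) yield a uniformly bounded $\|(H^{j})^{-1}\|$ near $\bar{u}$, eventual acceptance of the unit step, and the recursion $\|u^{j+1}-\bar{u}\|=\mathcal{O}(\|u^{j}-\bar{u}\|^{2})+\mathcal{O}(\|u^{j}-\bar{u}\|^{1+\varrho})$. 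Your reconstruction of the global-convergence step and of the final rate estimate is the intended argument, so no substantive discrepancy with the paper's (omitted) proof arises.
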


%%%%%%%%%%%%%%%%%%%%%%%%%
\subsection{The SSN based proximal majorization-minimization algorithm}

In this subsection, we describe the details of the PMM algorithm
for solving \eqref{eq:prob} wherein each subproblem is solved by the SSN method. We briefly present the PMM algorithm as follows.

\bigskip
\ni\fbox{\parbox{\textwidth}{\noindent{\bf Algorithm PMM.} Let $\tauone^{2,0}>0$,
$\tautwo^{2,0}>0$ be given parameters.
\begin{description}
 {
\item [Step 1.] Find $\tauone^{1}>0$,
$\tautwo^{1}>0$ and compute
\begin{eqnarray}\label{stage-I-problem}
\beta^{0}\approx\mathop{\mbox{argmin}}_{\beta\in\R^{n}} \left\{h(\beta;\tauone^{1},\tautwo^{1},0,0,b)\right\}
\end{eqnarray}
via solving its dual problem such that the KKT residual of the  problem (\ref{primal-problem-convex}) satisfies a prescribed stopping criterion. That is, given $(\tauone,\tautwo,\tilde{\beta},\tilde{v},\tilde{b})=(\tauone^{1},\tautwo^{1},0,0,b)$, apply the SSN method to find an approximate solution $u^{0}$ of (\ref{dual-subproblem-nonlinear-equation}) and then set $\beta^{0}=\mbox{Prox}_{\lambda \pone/\tauone^{1}}(-X^{T}u^{0}/\tauone^{1})$.
Let $k=0$ and go to Step 2.1.}
\item [Step 2.1] Compute
\begin{eqnarray*}
\beta^{k+1} \; = \; \mathop{\mbox{argmin}}_{\beta\in\R^{n}} \left\{h(\beta;\tauone^{2,k},\tautwo^{2,k},\beta^{k},\nabla \ptwo(\beta^{k}),X\beta^{k})+\langle\delta^{k},\beta-\beta^{k}\rangle\right\}
\end{eqnarray*}
via solving its dual problem. That is, given $(\tauone,\tautwo,\tilde{\beta},\tilde{v},\tilde{b})=(\tauone^{2,k},\tautwo^{2,k},\beta^{k},\nabla \ptwo(\beta^{k}),X\beta^{k})$, apply the SSN method to find an approximate solution $u^{k+1}$ of (\ref{dual-subproblem-nonlinear-equation}) such that the error vector $\delta^k$
satisfies the following accuracy condition:
\begin{align}\label{convergence-assump}
\|\delta^{k}\|\leq \frac{\tauone^{2,k}}{4}\|\beta^{k+1}-\beta^{k}\|+\frac{\tau^{2,k}\|X\beta^{k+1}-X{\beta}^{k}\|^{2}}{2\|\beta^{k+1}-\beta^{k}\|},
\end{align}
where $\beta^{k+1}=\mbox{Prox}_{\lambda \pone/\tauone^{2,k}}(\beta^{k}
+(\nabla \ptwo(\beta^{k})-X^{T}u^{k+1})/\tauone^{2,k})$.
\item [Step 2.2.]
If $\beta^{k+1}$ satisfies a prescribed stopping
criterion, terminate; otherwise update $\tauone^{2,k+1}=\rho_{k}\tauone^{2,k}$, $\tautwo^{2,k+1}=\rho_{k}\tautwo^{2,k}$ with $\rho_{k}\in(0,1)$ and return to Step 2.1 with $k=k+1$.
\end{description}}}
\bigskip

Since $h(\beta;\tauone^{1},\tautwo^{1},0,0,b)$ is bounded below, it has been proven in Proposition 4.19 of \cite{Flemming2011} that the optimal objective value of the problem (\ref{stage-I-problem}) will converge to the optimal objective value of the problem (\ref{primal-problem-convex}) with a difference of $\ptwo(0)$ %\blue{\fbox{q(0) may not be 0}}
as $\tauone^{1}\rightarrow 0$, $\tautwo^{1}\rightarrow 0$. We simply describe the convergence result of the algorithm in our first stage as follows and give a similar proof to that of Proposition 4.19 in \cite{Flemming2011}.
\begin{theorem}
Let $\bar{h}(\tauone^{1},\tautwo^{1}):=\min\limits_{\beta\in\R^{n}}\left\{h(\beta;\tauone^{1},\tautwo^{1},0,0,b)\right\}$. Then we have
\begin{eqnarray*}
\lim_{\tauone^{1},\tautwo^{1}\rightarrow 0}\bar{h}(\tauone^{1},\tautwo^{1})=\min_{\beta\in\R^{n}}\Big\{\|X\beta-b\|+\lambda \pone(\beta)-\ptwo(0)\Big\}.
\end{eqnarray*}
\end{theorem}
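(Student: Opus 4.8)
The plan is to recognize $\bar h(\tauone^{1},\tautwo^{1})$ as a Tikhonov-type perturbation of the convex problem \eqref{primal-problem-convex} and to squeeze its optimal value between two bounds that coincide in the limit. First I would substitute $(\tilde{\beta},\tilde{v},\tilde{b})=(0,0,b)$ into the definition \eqref{primal-subproblem} of $h$. Since $\ptwo(\tilde{\beta})=\ptwo(0)$ is a constant independent of $\beta$ and the linear term $\langle\tilde{v},\beta-\tilde{\beta}\rangle$ vanishes, this gives
\begin{eqnarray*}
h(\beta;\tauone^{1},\tautwo^{1},0,0,b)=F(\beta)-\ptwo(0)+\frac{\tauone^{1}}{2}\norm{\beta}^{2}+\frac{\tautwo^{1}}{2}\norm{X\beta-b}^{2},
\end{eqnarray*}
where $F(\beta):=\norm{X\beta-b}+\lambda \pone(\beta)$ is exactly the objective of \eqref{primal-problem-convex}. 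Thus, after pulling out the constant $-\ptwo(0)$, the claim reduces to proving that the minimum of $F(\beta)+\frac{\tauone^{1}}{2}\norm{\beta}^{2}+\frac{\tautwo^{1}}{2}\norm{X\beta-b}^{2}$ converges to $\min_{\beta}F(\beta)$ as $\tauone^{1},\tautwo^{1}\downarrow 0$.

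For the lower bound, I would simply observe that for $\tauone^{1},\tautwo^{1}\geq 0$ the two added proximal terms are nonnegative, so $h(\beta;\tauone^{1},\tautwo^{1},0,0,b)\geq F(\beta)-\ptwo(0)$ for every $\beta$. Taking the infimum over $\beta$ then yields $\bar h(\tauone^{1},\tautwo^{1})\geq \min_{\beta}F(\beta)-\ptwo(0)$, and hence $\liminf_{\tauone^{1},\tautwo^{1}\downarrow 0}\bar h(\tauone^{1},\tautwo^{1})\geq \min_{\beta}F(\beta)-\ptwo(0)$.

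For the upper bound I would first note that $F$ is a proper closed convex function which is coercive, because $\pone$ is a norm and $\lambda>0$, so $\lambda \pone(\beta)\to+\infty$ as $\norm{\beta}\to\infty$; hence its minimum is attained at some fixed $\beta^{\star}$. Evaluating the regularized objective at this $\beta^{\star}$ gives
\begin{eqnarray*}
\bar h(\tauone^{1},\tautwo^{1})\leq F(\beta^{\star})-\ptwo(0)+\frac{\tauone^{1}}{2}\norm{\beta^{\star}}^{2}+\frac{\tautwo^{1}}{2}\norm{X\beta^{\star}-b}^{2}.
\end{eqnarray*}
Letting $\tauone^{1},\tautwo^{1}\downarrow 0$ with $\beta^{\star}$ held fixed makes the last two terms vanish, so $\limsup_{\tauone^{1},\tautwo^{1}\downarrow 0}\bar h(\tauone^{1},\tautwo^{1})\leq F(\beta^{\star})-\ptwo(0)=\min_{\beta}F(\beta)-\ptwo(0)$. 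Combining the two bounds gives the desired limit $\min_{\beta}\{\norm{X\beta-b}+\lambda \pone(\beta)\}-\ptwo(0)$.

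The argument is essentially routine — it is the standard fact that vanishing Tikhonov regularization preserves optimal values, and it parallels the reasoning of Proposition 4.19 of \cite{Flemming2011} cited just before the statement — so I expect no serious obstacle. The only point requiring a little care is in the upper bound: one needs the unregularized minimizer $\beta^{\star}$ (or at least a bounded near-minimizing sequence) so that the perturbation terms $\frac{\tauone^{1}}{2}\norm{\beta^{\star}}^{2}+\frac{\tautwo^{1}}{2}\norm{X\beta^{\star}-b}^{2}$ remain bounded and vanish as the parameters tend to zero; this boundedness is exactly what the coercivity of $F$ coming from the norm regularizer guarantees.
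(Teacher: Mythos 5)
Your proposal is correct and follows essentially the same squeeze argument as the paper: the lower bound comes from the nonnegativity of the added proximal terms, and the upper bound from evaluating the regularized objective at a fixed point and letting $\tauone^{1},\tautwo^{1}\rightarrow 0$ (the paper does this for an arbitrary $\beta$ and then minimizes, which sidesteps your coercivity/attainment discussion, but the substance is identical). Your explicit use of $\liminf$/$\limsup$ and the remark on attainment of the minimizer are minor refinements of the same route.
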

\begin{proof}
For any $\tauone^{1},\tautwo^{1}>0$ and $\beta\in\R^{n}$, we have that
\begin{eqnarray*}
\bar{h}(\tauone^{1},\tautwo^{1})\leq \|X\beta-b\|+\lambda \pone(\beta)-\ptwo(0)+\frac{\tauone^{1}}{2}\|\beta\|^{2}+\frac{\tautwo^{1}}{2}\|X\beta-b\|^{2}.
\end{eqnarray*}
Therefore, $\lim\limits_{\tauone^{1},\tautwo^{1}\rightarrow 0}\bar{h}(\tauone^{1},\tautwo^{1})\leq\|X\beta-b\|+\lambda \pone(\beta)-\ptwo(0)$. That is \begin{eqnarray*}
\lim\limits_{\tauone^{1},\tautwo^{1}\rightarrow 0}\bar{h}(\tauone^{1},\tautwo^{1})\leq\min_{\beta\in\R^{n}}\Big\{\|X\beta-b\|+\lambda \pone(\beta)-\ptwo(0)\Big\}.
\end{eqnarray*}
Furthermore, $\bar{h}(\tauone^{1},\tautwo^{1})\geq\min\limits_{\beta\in\R^{n}}\Big\{\|X\beta-b\|+\lambda \pone(\beta)-\ptwo(0)\Big\}$. The desired result follows.
\end{proof}

\subsection{Convergence analysis of the PMM algorithm}
\label{sec:convergence analysis}

In this subsection, we analyze the convergence of the PMM algorithm.
First we recall the definition of the KL property of a function (see
\cite{attouch-bolte2009,bolte-pauwels2016,bolte-sabach-teboulle2014}
for more details). Let $\eta>0$ and $\Phi_{\eta}$ be the set of all
concave functions $\psi:[0,\eta)\rightarrow\R_{+}$ such
that
\begin{description}
\item (1) $\psi(0)=0$;
\item (2) $\psi$ is continuous at $0$ and continuously differentiable
on $(0,\eta)$;
\item (3) $\psi'(x)>0$, for any $x\in(0,\eta)$.
\end{description}
%Now we recall the definition of the Kurdyka-{\L}ojasiewicz property.
\begin{definition}
Let $f:\R^{n}\rightarrow (-\infty, \infty]$ be a proper lower
semi-continuous function and $\bar{x}\in\mbox{dom}(\partial
f):=\{x\in\mbox{dom}(f)\, |\partial f(x)\neq \emptyset\}$. The function $f$
is said to have the KL property at $\bar{x}$ if there exist
$\eta>0$, a neighbourhood $\mathcal{U}$ of $\bar{x}$ and a concave
function $\psi\in\Phi_{\eta}$ such that
\begin{eqnarray*}
\psi'(f(x)-f(\bar{x}))\mbox{dist}(0,\partial f (x))\geq 1,\quad \forall\,
x\in\mathcal{U}\ \mbox{and}\ f(\bar{x})<f(x)<f(\bar{x})+\eta,
\end{eqnarray*}
where $\mbox{dist}(x,C):=\min_{y\in C}\|y-x\|$ is the distance from
a point $x$ to a nonempty closed set $C$.
Furthermore, a function $f$ is called a KL function if it satisfies the KL property at all points in $\mbox{dom}\partial
f$.
\end{definition}
Note that a function is said to have the KL property at
$\bar{x}$ with an exponent $\alpha$ if the function $\psi$ in the
definition of the KL property takes the form of $\psi(x)=\gamma
x^{1-\alpha}$ with $\gamma>0$ and $\alpha\in[0,1)$. For the function $f(x)=x$, it
 has the KL property at any point
with the exponent 0.

Now we are ready to conduct the convergence analysis of the PMM algorithm. Denote $h_{k}(\beta):=h(\beta;\tauone^{2,k},\tautwo^{2,k},\beta^{k},\nabla \ptwo(\beta^{k}),X\beta^{k})$.
At the $k$-th iteration of stage I\!I, we have that
\begin{eqnarray*}
\beta^{k+1} = \mathop{\mbox{argmin}}_{\beta\in\R^{n}}
\left\{h_{k}(\beta)+\langle\delta^{k},\beta-\beta^{k}\rangle\right\}
\end{eqnarray*}
such that  condition (\ref{convergence-assump}) is satisfied.
The following lemma shows the descent property of the function
$h_{k}.$
\begin{lemma}\label{lemma:converge-1}
Let $\beta^{k+1}$ be an approximate solution of the subproblem in the $k$-th iteration such that (\ref{convergence-assump}) holds.
Then we have
\begin{eqnarray*}
h_{k}(\beta^{k})&\geq&h_{k}(\beta^{k+1})-\frac{\sigma^{2,k}}{4}\|\beta^{k+1}-\beta^{k}\|^{2}-\frac{\tau^{2,k}}{2}\|X\beta^{k+1}-X\beta^{k}\|^{2}.
\end{eqnarray*}
\end{lemma}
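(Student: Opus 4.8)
The plan is to establish the descent inequality by combining the optimality of $\beta^{k+1}$ for the perturbed subproblem with a strong-convexity-type estimate coming from the proximal terms, and then to absorb the error term $\delta^k$ using the accuracy condition (\ref{convergence-assump}). Recall that $h_k(\beta)=\|X\beta-b\|+\lambda \pone(\beta)-\ptwo(\beta^k)-\langle\nabla \ptwo(\beta^k),\beta-\beta^k\rangle+\frac{\tauone^{2,k}}{2}\|\beta-\beta^k\|^2+\frac{\tautwo^{2,k}}{2}\|X\beta-X\beta^k\|^2$, so that $h_k$ is the sum of a convex nonsmooth part and the two quadratic proximal terms. The key structural observation is that $h_k$ is strongly convex relative to its ``smooth'' proximal contributions: specifically, for any $\beta$, the proximal part $\frac{\tauone^{2,k}}{2}\|\beta-\beta^k\|^2+\frac{\tautwo^{2,k}}{2}\|X\beta-X\beta^k\|^2$ measured at $\beta=\beta^{k+1}$ gives exactly $\frac{\tauone^{2,k}}{2}\|\beta^{k+1}-\beta^k\|^2+\frac{\tautwo^{2,k}}{2}\|X\beta^{k+1}-X\beta^k\|^2$.

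\textbf{Main steps.}

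First I would use that $\beta^{k+1}$ is the exact minimizer of the perturbed objective $h_k(\beta)+\langle\delta^k,\beta-\beta^k\rangle$. Since this objective is a sum of a convex function and the strongly convex proximal quadratics, it is strongly convex (at least along the $\tauone^{2,k}\|\cdot\|^2$ direction), so I can write a first-order optimality/strong-convexity inequality comparing the value at $\beta^{k+1}$ against the value at $\beta^k$. Evaluating the perturbed objective at $\beta=\beta^k$ kills the linear perturbation term and the proximal terms, leaving just $\|X\beta^k-b\|+\lambda\pone(\beta^k)-\ptwo(\beta^k)=h_k(\beta^k)$. Comparing this to the value at the minimizer $\beta^{k+1}$, and using the strong convexity modulus, yields
\begin{eqnarray*}
h_k(\beta^k)\;\geq\; h_k(\beta^{k+1})+\langle\delta^k,\beta^{k+1}-\beta^k\rangle+\frac{\tauone^{2,k}}{2}\|\beta^{k+1}-\beta^k\|^2+\frac{\tautwo^{2,k}}{2}\|X\beta^{k+1}-X\beta^k\|^2.
\end{eqnarray*}

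\textbf{Handling the error term.}

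The remaining task is to control $\langle\delta^k,\beta^{k+1}-\beta^k\rangle$. By Cauchy--Schwarz, $\langle\delta^k,\beta^{k+1}-\beta^k\rangle\geq -\|\delta^k\|\,\|\beta^{k+1}-\beta^k\|$. Substituting the accuracy bound (\ref{convergence-assump}), namely $\|\delta^k\|\leq \frac{\tauone^{2,k}}{4}\|\beta^{k+1}-\beta^k\|+\frac{\tautwo^{2,k}\|X\beta^{k+1}-X\beta^k\|^2}{2\|\beta^{k+1}-\beta^k\|}$, and multiplying through by $\|\beta^{k+1}-\beta^k\|$, gives $\|\delta^k\|\,\|\beta^{k+1}-\beta^k\|\leq \frac{\tauone^{2,k}}{4}\|\beta^{k+1}-\beta^k\|^2+\frac{\tautwo^{2,k}}{2}\|X\beta^{k+1}-X\beta^k\|^2$. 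The $\frac{\tautwo^{2,k}}{2}\|X\beta^{k+1}-X\beta^k\|^2$ from this bound exactly cancels the corresponding strong-convexity gain above, while the $\frac{\tauone^{2,k}}{4}\|\beta^{k+1}-\beta^k\|^2$ term consumes half of the $\frac{\tauone^{2,k}}{2}\|\beta^{k+1}-\beta^k\|^2$ gain, leaving a residual of $\frac{\tauone^{2,k}}{4}\|\beta^{k+1}-\beta^k\|^2$ on the favorable side. Rearranging produces exactly the claimed inequality, with the $-\frac{\sigma^{2,k}}{4}\|\beta^{k+1}-\beta^k\|^2-\frac{\tau^{2,k}}{2}\|X\beta^{k+1}-X\beta^k\|^2$ terms arising from moving these controlled quantities to the right-hand side.

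\textbf{Anticipated obstacle.}

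The delicate point is making the strong-convexity inequality rigorous when $\beta^{k+1}$ is only an \emph{approximate} solution: strictly speaking $\beta^{k+1}$ is the exact argmin of the perturbed problem $h_k(\cdot)+\langle\delta^k,\cdot-\beta^k\rangle$, with $\delta^k$ encoding the inexactness, so I must be careful to state the optimality condition for that perturbed problem rather than for $h_k$ itself. The cleanest route is to invoke strong convexity of the perturbed objective along the $\tauone^{2,k}$-direction (the nonsmooth convex part plus $\frac{\tautwo^{2,k}}{2}\|X\cdot-X\beta^k\|^2$ is convex, and $\frac{\tauone^{2,k}}{2}\|\cdot-\beta^k\|^2$ supplies the modulus), which guarantees the quadratic lower bound at the minimizer without needing differentiability of the nonsmooth terms. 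The accounting of the two separate proximal contributions — ensuring the $\tautwo^{2,k}$ term is matched precisely and the $\tauone^{2,k}$ term is split as $\frac14+\frac14$ — is the book-keeping that must be done carefully, but it is otherwise routine.
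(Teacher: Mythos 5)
Your proof is correct, and its core mechanism is the one the paper uses: the exact optimality of $\beta^{k+1}$ for the perturbed subproblem $h_k(\cdot)+\langle\delta^k,\cdot-\beta^k\rangle$ gives $-\delta^k\in\partial h_k(\beta^{k+1})$, the subgradient inequality compares $h_k(\beta^k)$ with $h_k(\beta^{k+1})$, and Cauchy--Schwarz together with \eqref{convergence-assump} absorbs $\langle\delta^k,\beta^{k+1}-\beta^k\rangle$. The difference is that the paper stops at plain convexity of $h_k$ and lands exactly on the stated inequality, whereas you also keep the quadratic gain from the proximal terms and, after the cancellations you describe, actually obtain the strictly stronger bound $h_k(\beta^k)\ge h_k(\beta^{k+1})+\frac{\tauone^{2,k}}{4}\|\beta^{k+1}-\beta^k\|^2$, which of course implies the lemma. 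Two remarks. First, your justification of the strong-convexity display only supplies the modulus along the $\frac{\tauone^{2,k}}{2}\|\cdot-\beta^k\|^2$ direction, while the display also claims the $\frac{\tautwo^{2,k}}{2}\|X\beta^{k+1}-X\beta^k\|^2$ gain; to get that term rigorously you should split the perturbed objective into its convex part and the quadratic $\frac{\tauone^{2,k}}{2}\|\cdot-\beta^k\|^2+\frac{\tautwo^{2,k}}{2}\|X\cdot-X\beta^k\|^2$, apply the subgradient inequality to the former and the exact second-order expansion at $\beta^{k+1}$ to the latter (the weaker version without the $\tautwo^{2,k}$ gain still proves the lemma, so nothing breaks either way). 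Second, the extra $\frac{\tauone^{2,k}}{4}\|\beta^{k+1}-\beta^k\|^2$ you gain here is exactly what the paper recovers one step later, in the proof of Theorem \ref{thm:converge}, from the majorization gap $h_k(\beta^{k+1})-g(\beta^{k+1})\ge\frac{\tauone^{2,k}}{2}\|\beta^{k+1}-\beta^k\|^2+\frac{\tautwo^{2,k}}{2}\|X\beta^{k+1}-X\beta^k\|^2$; so your route front-loads into the lemma what the paper defers to the theorem, and both arrive at the same descent estimate.
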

\begin{proof}
Since $h_{k}$ is a convex function and $-\delta^{k}\in\partial
h_{k}(\beta^{k+1})$, we obtain
\begin{eqnarray*}
h_{k}(\beta^{k})-h_{k}(\beta^{k+1})\geq\langle\delta^{k},\beta^{k+1}-\beta^{k}\rangle\geq-\frac{\sigma^{2,k}}{4}\|\beta^{k+1}-\beta^{k}\|^{2}-\frac{\tau^{2,k}}{2}\|X\beta^{k+1}-X\beta^{k}\|^{2}.
\end{eqnarray*}
The last inequality is valid since the condition (\ref{convergence-assump}) holds. The desired result follows.
\end{proof}
Next we recall the following lemma which is similar to that in \cite{Cui2018,Pang2016}. %For simplicity, we just state the results here.
\begin{lemma}\label{lemma:converge-2}
The vector $\bar{\beta}\in\R^{n}$ is a $d$-stationary point of
(\ref{eq:prob}) if and only if there exist $\tauone,\tautwo\geq0$ such that
\begin{eqnarray*}
\bar{\beta}\in\mathop{\rm{argmin}}_{\beta\in\R^{n}}\Big\{
 h(\beta;\tauone,\tautwo,\bar{\beta},\nabla \ptwo(\bar{\beta}),X\bar{\beta})\Big\}.
\end{eqnarray*}
\end{lemma}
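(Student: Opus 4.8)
The plan is to prove both directions of the equivalence by unwinding the definitions of d-stationarity and the optimality condition for the convex subproblem, and to exploit the fact that the proximal terms vanish at the point $\bar\beta$ itself. First I would set up the correct objective. Recall that
\begin{eqnarray*}
h(\beta;\tauone,\tautwo,\bar\beta,\nabla q(\bar\beta),X\bar\beta)
&=&\|X\beta-b\|+\lambda p(\beta)-q(\bar\beta)-\langle\nabla q(\bar\beta),\beta-\bar\beta\rangle\\
&&+\frac{\tauone}{2}\|\beta-\bar\beta\|^{2}+\frac{\tautwo}{2}\|X\beta-X\bar\beta\|^{2}.
\end{eqnarray*}
The two added proximal terms and the linear term all vanish at $\beta=\bar\beta$, so $h(\bar\beta;\tauone,\tautwo,\bar\beta,\nabla q(\bar\beta),X\bar\beta)=\|X\bar\beta-b\|+\lambda p(\bar\beta)-q(\bar\beta)=g(\bar\beta)$. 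This identity is the pivot of the whole argument: it says that $\bar\beta$ is a minimizer of the convex majorant $h(\cdot)$ precisely when $h(\cdot)\geq g(\bar\beta)$ everywhere, and it connects the subproblem value back to the original objective $g$.

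For the ``if'' direction, suppose $\bar\beta$ minimizes the convex function $h(\cdot)=h(\beta;\tauone,\tautwo,\bar\beta,\nabla q(\bar\beta),X\bar\beta)$. Since $h$ is convex, this is equivalent to $0\in\partial h(\bar\beta)$. I would compute the directional derivative $h'(\bar\beta;d)$ for an arbitrary direction $d$; because the two proximal terms are smooth with zero gradient at $\bar\beta$ and the linear term contributes $-\langle\nabla q(\bar\beta),d\rangle$, we get
\begin{eqnarray*}
h'(\bar\beta;d)=(\|X\cdot-b\|)'(\bar\beta;d)+\lambda p'(\bar\beta;d)-\langle\nabla q(\bar\beta),d\rangle.
\end{eqnarray*}
On the other hand, since $q$ is convex and smooth, the directional derivative of $g$ at $\bar\beta$ is exactly
\begin{eqnarray*}
g'(\bar\beta;d)=(\|X\cdot-b\|)'(\bar\beta;d)+\lambda p'(\bar\beta;d)-\langle\nabla q(\bar\beta),d\rangle,
\end{eqnarray*}
so $g'(\bar\beta;d)=h'(\bar\beta;d)$ for every $d$. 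Minimality of $\bar\beta$ for the convex $h$ forces $h'(\bar\beta;d)\geq 0$ for all $d$, hence $g'(\bar\beta;d)\geq 0$ for all $d$, which is precisely d-stationarity of $g$ at $\bar\beta$. For the ``only if'' direction I would run the same chain in reverse: d-stationarity of $g$ gives $h'(\bar\beta;d)\geq 0$ for all $d$, and since $h$ is convex this directional-derivative nonnegativity is equivalent to $0\in\partial h(\bar\beta)$, i.e. $\bar\beta$ minimizes $h$. I would simply exhibit any admissible pair, e.g. $\tauone=\tautwo=0$ (or any $\tauone,\tautwo\geq 0$), since the proximal terms play no role in the first-order condition at $\bar\beta$.

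The key structural point I expect to carry the proof is the exact matching of directional derivatives $g'(\bar\beta;d)=h'(\bar\beta;d)$, which holds because $q$ is smooth so that linearizing $-q$ at $\bar\beta$ changes neither the value nor the first-order behaviour of the objective at the base point. The main obstacle, and the place to be careful, is justifying that the nonsmooth terms $\|X\cdot-b\|$ and $p(\cdot)$ are directionally differentiable and that the directional derivative of their sum splits additively as written; this is where I would invoke convexity (every finite convex function is directionally differentiable with $\phi'(\bar x;d)=\max\{\langle v,d\rangle : v\in\partial\phi(\bar x)\}$) together with the equivalence, stated in the Preliminary section, between $0\in\hat\partial\phi(\bar x)$ and d-stationarity for locally Lipschitz directionally differentiable functions. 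Once the additive splitting and the $g'=h'$ identity are in hand, both implications are immediate from the convexity of $h$.
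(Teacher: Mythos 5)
Your proof is correct and follows essentially the same route as the paper's: both arguments rest on the observation that $g$ and the convex majorant $h(\cdot;\tauone,\tautwo,\bar\beta,\nabla q(\bar\beta),X\bar\beta)$ have identical first-order behaviour at $\bar\beta$ (the paper states this as $\partial g(\bar\beta)=\partial_\beta h(\bar\beta;\cdot)$, you state it as $g'(\bar\beta;d)=h'(\bar\beta;d)$ for all $d$, which is equivalent here), combined with the fact that for the convex function $h$ the first-order condition at $\bar\beta$ characterizes global minimality. Your version is slightly more detailed in verifying that the proximal and linearization terms contribute nothing to the first-order condition at the base point, but it is the same proof.
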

\begin{proof} Recall the objective function $g$ defined in \eqref{eq:prob}.
Since $g$ is directionally differentiable at $\bar{\beta}$, we can see that $\bar{\beta}$ being a d-stationary point of $g$ is equivalent to $0\in\partial g(\bar\beta)$. It is easy
to show that $\partial g(\bar\beta)=\partial_{\beta}h(\bar\beta;\tauone,\tautwo,\bar{\beta},\nabla \ptwo(\bar{\beta}),X\bar{\beta})$. For given $\tauone$, $\tautwo$ and $\bar{\beta}$, the function $h(\cdot;\tauone,\tautwo,\bar{\beta},\nabla \ptwo(\bar{\beta}),X\bar{\beta})$ is convex. Thus $0\in\partial_{\beta} h(\bar\beta;\tauone,\tautwo,\bar{\beta},\nabla \ptwo(\bar{\beta}),X\bar{\beta})$ is equivalent to $\bar\beta\in\mathop{\mbox{argmin}}_{\beta\in\R^{n}}\Big\{h(\beta;\tauone,\tautwo,\bar{\beta},\nabla \ptwo(\bar{\beta}),X\bar{\beta})\Big\}$. This completes the proof.
\end{proof}

It has been proven in \cite{Cui2018} that the sequence generated by
the PMM algorithm converges to a directional stationary solution if
the exact solutions of the subproblems are obtained. The following
theorem shows that the result is also true if the subproblems are
solved approximately.
\begin{theorem}\label{thm:converge}
Suppose that the function
$g$ in \eqref{eq:prob} is bounded below and Assumption \ref{assump-1} holds.
Assume that $\{\tauone^{2,k}\}$ and $\{\tautwo^{2,k}\}$ are convergent
sequences.
Let $\{\beta^{k}\}$ be the
sequence generated by the PMM algorithm. Then every cluster point of the sequence $\{\beta^{k}\}$, if exists, is a d-stationary
point of (\ref{eq:prob}).
\end{theorem}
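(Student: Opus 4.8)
The plan is to run the standard majorization--minimization convergence argument in three moves---sufficient decrease, vanishing successive differences, and a limiting optimality condition---using the two defining features of the majorant $h_k$. These are that linearization of the smooth convex $q$ is exact at the center, so that $h_k(\beta^k)=g(\beta^k)$, and that for every $\beta$
\begin{eqnarray*}
h_k(\beta)-g(\beta)&=&\big(q(\beta)-q(\beta^k)-\langle\nabla q(\beta^k),\beta-\beta^k\rangle\big)\\
&&+\frac{\sigma^{2,k}}{2}\|\beta-\beta^k\|^2+\frac{\tau^{2,k}}{2}\|X\beta-X\beta^k\|^2\ \geq\ 0,
\end{eqnarray*}
the bracketed Bregman term being nonnegative by convexity of $q$; hence $h_k$ majorizes $g$.

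First I would upgrade Lemma \ref{lemma:converge-1} to a genuine decrease of $g$. Rewriting that lemma as $h_k(\beta^{k+1})\leq h_k(\beta^k)+\frac{\sigma^{2,k}}{4}\|\beta^{k+1}-\beta^k\|^2+\frac{\tau^{2,k}}{2}\|X\beta^{k+1}-X\beta^k\|^2$ and substituting both $h_k(\beta^k)=g(\beta^k)$ and the identity above evaluated at $\beta^{k+1}$, the two copies of $\frac{\tau^{2,k}}{2}\|X\beta^{k+1}-X\beta^k\|^2$ cancel and the Bregman term may be discarded, leaving
\begin{eqnarray*}
g(\beta^{k+1})\ \leq\ g(\beta^k)-\frac{\sigma^{2,k}}{4}\|\beta^{k+1}-\beta^k\|^2.
\end{eqnarray*}
As $g$ is bounded below, $\{g(\beta^k)\}$ is nonincreasing and convergent, and telescoping yields $\sum_k\sigma^{2,k}\|\beta^{k+1}-\beta^k\|^2<\infty$. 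Reading the convergence hypothesis on $\{\sigma^{2,k}\}$ as $\sigma^\ast:=\lim_k\sigma^{2,k}>0$, so that $\sigma^{2,k}\geq\sigma^\ast$, this forces $\sum_k\|\beta^{k+1}-\beta^k\|^2<\infty$ and hence $\|\beta^{k+1}-\beta^k\|\to0$. Combining this with the accuracy condition \eqref{convergence-assump} and the Lipschitz bound $\|X\beta^{k+1}-X\beta^k\|\leq\|X\|\,\|\beta^{k+1}-\beta^k\|$ (so that $\|X\beta^{k+1}-X\beta^k\|^2/\|\beta^{k+1}-\beta^k\|\to0$) gives $\|\delta^k\|\to0$.

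Then I would pass to the limit. Writing $\psi(\beta):=\|X\beta-b\|+\lambda p(\beta)$, the inclusion $-\delta^k\in\partial h_k(\beta^{k+1})$ defining the subproblem solution unfolds to
\begin{eqnarray*}
\nabla q(\beta^k)-\delta^k-\sigma^{2,k}(\beta^{k+1}-\beta^k)-\tau^{2,k}X^TX(\beta^{k+1}-\beta^k)\ \in\ \partial\psi(\beta^{k+1}).
\end{eqnarray*}
Given a cluster point $\bar\beta$ with $\beta^{k_j}\to\bar\beta$, the fact that $\|\beta^{k+1}-\beta^k\|\to0$ also forces $\beta^{k_j-1}\to\bar\beta$. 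Evaluating the inclusion at $k=k_j-1$ and sending $j\to\infty$, its left-hand side converges to $\nabla q(\bar\beta)$ (by continuity of $\nabla q$, by $\delta^{k}\to0$ and $\|\beta^{k+1}-\beta^k\|\to0$, and by boundedness of $\{\sigma^{2,k}\},\{\tau^{2,k}\}$), while $\beta^{k_j}\to\bar\beta$. Outer semicontinuity of the subdifferential of the closed proper convex $\psi$ then gives $\nabla q(\bar\beta)\in\partial\psi(\bar\beta)$, equivalently $0\in\partial_\beta h(\bar\beta;\sigma^\ast,\tau^\ast,\bar\beta,\nabla q(\bar\beta),X\bar\beta)$ because the two proximal terms have vanishing gradient at the center $\bar\beta$. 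Since this subproblem is convex, $\bar\beta$ is its minimizer, and Lemma \ref{lemma:converge-2} identifies $\bar\beta$ as a d-stationary point of \eqref{eq:prob}.

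The hard part is the first move: one must use the exact identity relating $h_k$ and $g$, not merely the one-sided majorization $g\leq h_k$, because only then do the $\tau^{2,k}$-terms cancel and a clean decrease proportional to $\sigma^{2,k}\|\beta^{k+1}-\beta^k\|^2$ survive the error $\delta^k$ allowed by \eqref{convergence-assump}. The secondary subtlety is that the convergence hypothesis must be understood as positivity of the limit $\sigma^\ast$ (a decreasing positive sequence converges automatically, so positivity is the operative content), since this is precisely what turns summability into $\|\beta^{k+1}-\beta^k\|\to0$. Finally, Assumption \ref{assump-1} is what is used upstream: it keeps the iterates in the non-overfitting regime where Proposition \ref{prop;4.1} makes each subproblem nondegenerate, guaranteeing that the SSN step returns a well-defined $\beta^{k+1}$ meeting \eqref{convergence-assump}.
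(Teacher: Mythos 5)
Your proposal is correct, and its skeleton --- sufficient decrease via the exact identity $h_k(\beta^k)=g(\beta^k)$ together with the majorization $h_k\geq g$, then $\|\beta^{k+1}-\beta^k\|\to 0$, then identification of the cluster point through Lemma \ref{lemma:converge-2} --- is exactly the paper's. The first move is literally the paper's chain $g(\beta^k)=h_k(\beta^k)\geq h_k(\beta^{k+1})-\frac{\tauone^{2,k}}{4}\|\beta^{k+1}-\beta^k\|^2-\frac{\tautwo^{2,k}}{2}\|X\beta^{k+1}-X\beta^k\|^2\geq g(\beta^{k+1})+\frac{\tauone^{2,k}}{4}\|\beta^{k+1}-\beta^k\|^2$. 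The only genuine divergence is the final limit passage: the paper works at the level of function values, using $h_k(\beta)\geq h_k(\beta^{k+1})-\|\delta^k\|\,\|\beta^{k+1}-\beta\|$ for all $\beta$ and letting $k\to\infty$ along the subsequence to conclude $\beta^{\infty}\in\mathop{\mbox{argmin}}h(\cdot;\tauone^{2,\infty},\tautwo^{2,\infty},\beta^{\infty},\nabla q(\beta^{\infty}),X\beta^{\infty})$, whereas you work at the level of subgradients, unfolding $-\delta^k\in\partial h_k(\beta^{k+1})$ into an inclusion in $\partial\psi(\beta^{k+1})$ and invoking outer semicontinuity of the convex subdifferential. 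The two are equivalent here; yours makes the reliance on $\|\delta^k\|\to 0$ and on continuity of $\nabla q$ more explicit, while the paper's avoids manipulating subdifferentials. You are also right to flag that the stated hypothesis that $\{\tauone^{2,k}\}$ converges is only operative if the limit is positive: since $\tauone^{2,k+1}=\rho_k\tauone^{2,k}$ with $\rho_k\in(0,1)$, the sequence converges automatically, and without $\tauone^{2,\infty}>0$ the step $\tauone^{2,k}\|\beta^{k+1}-\beta^k\|^2\to 0\Rightarrow\|\beta^{k+1}-\beta^k\|\to 0$ (which the paper asserts without comment) would fail. That reading is the charitable and necessary one, and making it explicit strengthens rather than departs from the paper's argument.
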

\begin{proof}
Combing Lemma \ref{lemma:converge-1} and the convexity of $\ptwo$, we have
\begin{eqnarray*}
 \hspace{-7mm}
g(\beta^{k})&=&h_{k}(\beta^{k}) \;\geq\; h_{k}(\beta^{k+1})-\frac{\sigma^{2,k}}{4}\|\beta^{k+1}-\beta^{k}\|^{2}-\frac{\tau^{2,k}}{2}\|X\beta^{k+1}-X\beta^{k}\|^{2}\nonumber\\
&\geq&
g(\beta^{k+1})+\frac{\tauone^{2,k}}{4}\|\beta^{k+1}-\beta^{k}\|^{2}.
\label{eq:q-non-degerate}
\end{eqnarray*}
Therefore the sequence $\{g(\beta^{k})\}$ is non-increasing. Since
$g(\beta)$ is bounded below, the sequence $\{g(\beta^{k})\}$
converges and so is the sequence $\{\|\beta^{k+1}-\beta^{k}\|\}$
which converges to zero. Next, we prove that the limit of a
convergent subsequence of $\{\beta^{k}\}$ is a $d$-stationary point
of (\ref{eq:prob}). Let $\beta^{\infty}$ be the limit of a
convergent subsequence $\{\beta^{k}\}_{k\in\mathcal{K}}$. We can easily prove that $\{\beta^{k+1}\}_{k\in\mathcal{K}}$ also converges to $\beta^{\infty}$. It follows from the definition of $\beta^{k+1}$ that
\begin{eqnarray*}
h_{k}(\beta)&\geq& h_{k}(\beta^{k+1})+\langle\delta^{k},\beta^{k+1}-\beta\rangle\geq h_{k}(\beta^{k+1})-\|\delta^{k}\|
\|\beta^{k+1}-\beta\|,\quad \forall\, \beta\in\R^{m}.
\end{eqnarray*}
Letting $k(\in\mathcal{K})\rightarrow\infty$, we obtain that
\begin{equation*}
\beta^{\infty}\in\mathop{\mbox{argmin}}_{\beta\in\R^{n}}\Big\{h(\beta;\tauone^{2,\infty},\tautwo^{2,\infty},\beta^{\infty},\nabla \ptwo(\beta^{\infty}),X\beta^{\infty})\Big\},
\end{equation*}
where $\tauone^{2,\infty}=\lim\limits_{k\rightarrow\infty}\tauone^{2,k}\geq 0$ and $\tautwo^{2,\infty}=\lim\limits_{k\rightarrow\infty}\tautwo^{2,k}\geq 0$.
The desired result follows from Lemma \ref{lemma:converge-2}. This completes the proof.
\end{proof}
We can also establish the local convergence rate of the sequence
$\{\beta^{k}\}$ under either an isolation assumption of the accumulation
point or the KL property assumption.
\begin{theorem}
Suppose that the function $g$ is bounded below and Assumption \ref{assump-1} holds. Let $\{\beta^{k}\}$ be the
sequence generated by the PMM algorithm.
Let $\mathcal{B}^{\infty}$ be the set of  cluster points of
the sequence $\{\beta^{k}\}$. If either one of the following two
conditions holds,
\begin{enumerate}[(a)]
\item  $\mathcal{B}^{\infty}$ contains an isolated element;
\item The
sequence $\{\beta^{k}\}$ is bounded; for all $\beta\in\mathcal{B}^{\infty}$, $\nabla \ptwo$ is locally Lipschitz continuous near $\beta$
and the
function $g$ has the KL property at
$\beta$;
\end{enumerate}
then the whole sequence $\{\beta^{k}\}$ converges to
%unique
an element of $\mathcal{B}^{\infty}$. Moreover, if  condition (b) is
satisfied such that $\{\beta^{k}\}$ converges to
$\beta^{\infty}\in\mathcal{B}^{\infty}$ and the function $g$ has the KL
property at $\beta^{\infty}$ with an exponent $\alpha\in[0,1)$, then
we have the following results:
\begin{enumerate}[(i)]
\item If $\alpha=0$, then the sequence $\{\beta^{k}\}$ converges in a
finite number of steps;
\item If $\alpha\in(0,\frac{1}{2}]$, then the sequence $\{\beta^{k}\}$
converges R-linearly, that is, for all $k\geq 1$, there exist $\nu>0$
and $\eta\in[0,1)$ such that
$\|\beta^{k}-\beta^{\infty}\|\leq\nu\eta^{k}$;
\item If $\alpha\in(\frac{1}{2},1)$, then the sequence
$\{\beta^{k}\}$ converges R-sublinearly, that is, for all $k\geq 1$,
there exists $\nu>0$ such that
$\|\beta^{k}-\beta^{\infty}\|\leq\nu
k^{-\frac{1-\alpha}{2\alpha-1}}$.
\end{enumerate}
\end{theorem}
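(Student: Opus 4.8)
The plan is to place the PMM iteration into the Kurdyka-{\L}ojasiewicz (KL) descent framework of \cite{attouch-bolte2009,bolte-sabach-teboulle2014}, for which three ingredients are needed at the level of the limiting subdifferential: a sufficient-decrease inequality, a relative-error (subgradient) bound, and a continuity condition, together with the KL property at the accumulation point. The sufficient-decrease inequality is already in hand: the proof of Theorem~\ref{thm:converge} gives
\[
g(\beta^{k})-g(\beta^{k+1})\;\ge\;\tfrac{\tauone^{2,k}}{4}\,\|\beta^{k+1}-\beta^{k}\|^{2}.
\]
Since $g$ is bounded below and $\{\tauone^{2,k}\}$ stays bounded away from $0$ (its limit being positive), telescoping yields $\sum_{k}\|\beta^{k+1}-\beta^{k}\|^{2}<\infty$, whence $\|\beta^{k+1}-\beta^{k}\|\to 0$. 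Under condition~(a) this already suffices: it is a standard fact that a sequence with $\|x^{k+1}-x^{k}\|\to 0$ converges to any isolated accumulation point it possesses, so $\{\beta^{k}\}$ converges to the isolated element of $\mathcal{B}^{\infty}$.

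The heart of the matter, and the step I expect to be the main obstacle, is the relative-error bound under condition~(b), where the inexactness $\delta^{k}$ of the subproblem solves must be controlled. From Step~2.1 the iterate satisfies $-\delta^{k}\in\partial h_{k}(\beta^{k+1})$; expanding $\partial h_{k}$ and using the sum rule $\partial g(\cdot)=\partial(\|X\cdot-b\|+\lambda p)(\cdot)-\nabla q(\cdot)$ to pass from $\nabla q(\beta^{k})$ to $\nabla q(\beta^{k+1})$ shows that
\[
\xi^{k+1}:=-\delta^{k}+\big(\nabla q(\beta^{k})-\nabla q(\beta^{k+1})\big)-\tauone^{2,k}(\beta^{k+1}-\beta^{k})-\tautwo^{2,k}X^{T}X(\beta^{k+1}-\beta^{k})\ \in\ \partial g(\beta^{k+1}).
\]
The local Lipschitz continuity of $\nabla q$ near the bounded cluster set controls the second term, while the accuracy condition \eqref{convergence-assump}, combined with $\|X(\beta^{k+1}-\beta^{k})\|^{2}\le\|X\|^{2}\|\beta^{k+1}-\beta^{k}\|^{2}$ to absorb the quotient term appearing in the bound on $\|\delta^{k}\|$, controls the first. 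Using the boundedness of $\{\beta^{k}\}$ and the convergence of $\{\tauone^{2,k}\}$, $\{\tautwo^{2,k}\}$ to make every constant uniform in $k$, I obtain $\mathrm{dist}(0,\partial g(\beta^{k+1}))\le\|\xi^{k+1}\|\le c\,\|\beta^{k+1}-\beta^{k}\|$ for a single constant $c>0$.

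Finally, $g$ is continuous, so $g(\beta^{k})\downarrow g(\beta^{\infty})$ and $g$ is constant on the compact connected set $\mathcal{B}^{\infty}$; invoking the KL property at each point of $\mathcal{B}^{\infty}$ together with a covering argument gives a uniformized KL inequality with a single desingularizing function $\psi$ on a neighbourhood of $\mathcal{B}^{\infty}$. Combining this with the two inequalities above in the standard telescoping estimate --- applying concavity of $\psi$ to $\psi(g(\beta^{k})-g(\beta^{\infty}))$, inserting the KL inequality and the bound $\mathrm{dist}(0,\partial g(\beta^{k}))\le c\,\|\beta^{k}-\beta^{k-1}\|$, and then the elementary inequality $2\sqrt{uv}\le u+v$ --- yields $\sum_{k}\|\beta^{k+1}-\beta^{k}\|<\infty$, so $\{\beta^{k}\}$ is Cauchy and converges to a point of $\mathcal{B}^{\infty}$. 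For the convergence rates I would specialize $\psi(s)=\gamma s^{1-\alpha}$ and iterate the resulting recursion on the tail lengths $S_{k}:=\sum_{j\ge k}\|\beta^{j+1}-\beta^{j}\|$, which dominate $\|\beta^{k}-\beta^{\infty}\|$: for $\alpha=0$ the KL inequality forces $\mathrm{dist}(0,\partial g(\beta^{k}))\ge 1/\gamma$ whereas the relative-error bound drives it to $0$, so the process terminates finitely; for $\alpha\in(0,\tfrac12]$ one obtains $S_{k}\le\theta S_{k-1}$ with $\theta\in[0,1)$, hence R-linear convergence; and for $\alpha\in(\tfrac12,1)$ the recursion gives $S_{k}=O\big(k^{-(1-\alpha)/(2\alpha-1)}\big)$, i.e. R-sublinear convergence, exactly as in \cite{attouch-bolte2009}.
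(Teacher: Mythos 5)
Your proposal is correct and follows essentially the same route as the paper: condition (a) is handled via the isolated-cluster-point argument (the paper cites Proposition 8.3.10 of \cite{Facchinei2003}), and condition (b) is handled by verifying the same three ingredients --- sufficient decrease, the subgradient bound via the identical $\xi^{k+1}$ built from $\delta^{k}$, $\nabla q(\beta^{k})-\nabla q(\beta^{k+1})$ and the proximal terms, and continuity along a subsequence --- before invoking the standard KL telescoping and rate recursions (the paper defers these last steps to Proposition 4 of \cite{bolte-pauwels2016}, which you instead spell out).
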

\begin{proof}
We know from Theorem \ref{thm:converge} that
$\lim\limits_{k\rightarrow\infty}\|\beta^{k+1}-\beta^{k}\|=0$. Then it
follows from Proposition 8.3.10 of \cite{Facchinei2003} that the
sequence $\{\beta^{k}\}$ converges to an isolated element of
$\mathcal{B}^{\infty}$ under the condition (a). In order to derive
the convergence rate of the sequence $\{\beta^{k}\}$ under the
condition (b), we first establish some properties of the
sequence $\{\beta^{k}\}$, i.e.,
\begin{enumerate}[(1)]
\item $g(\beta^{k})\geq
g(\beta^{k+1})+\frac{\tauone^{2,k}}{4}\|\beta^{k+1}-\beta^{k}\|^{2}$;
\item there exists a subsequence $\{\beta^{k_{j}}\}$ of $\{\beta^{k}\}$
such that $\beta^{k_{j}}\rightarrow \beta^{\infty}$ with
$g(\beta^{k_{j}})\rightarrow g(\beta^{\infty})$ as $j\rightarrow
\infty$;
\item for $k$ sufficient large, there exist a constant $K>0$ and $\xi^{k+1}\in\partial g(\beta^{k+1})$ such
that $\|\xi^{k+1}\|\leq K\|\beta^{k+1}-\beta^{k}\|$.
\end{enumerate}
The properties (1) and (2) are already known from Theorem
\ref{thm:converge}. To establish the property (3), we first note that $\mathcal{B}^{\infty}$ is a nonempty, compact and connected set by Proposition 8.3.9 of \cite{Facchinei2003}. Furthermore, let
$\xi^{k+1}=\nabla \ptwo(\beta^{k})-\nabla
\ptwo(\beta^{k+1})-\tauone^{2,k}(\beta^{k+1}-\beta^{k})-\tautwo^{2,k}X^{T}X(\beta^{k+1}-\beta^{k})-\delta^{k}$. We have that $\xi^{k+1}\in\partial g(\beta^{k+1})$. Since $\nabla \ptwo$ is locally Lipschitz continuous near all
$\beta\in\mathcal{B}^{\infty}$ and $\|\delta^{k}\|\leq
\frac{\sigma^{2,k}}{4}\|\beta^{k+1}-\beta^{k}\|+\frac{\tau^{2,k}\|X\beta^{k+1}-X\beta^{k}\|^{2}}{2\|\beta^{k+1}-\beta^{k}\|}$, the property (3) holds for some
constant $K>0$ with $\|\xi^{k+1}\|\leq K\|\beta^{k+1}-\beta^{k}\|$ for $k$ sufficiently large.
 With the properties (1)-(3), the convergence rate of
the sequence $\{\beta^{k}\}$ can be established similarly to that of
Proposition 4 of \cite{bolte-pauwels2016}.
\end{proof}

%%%%%%%%%%%%%%%%%%%%%%%%%%%%%%
\section{Numerical experiments}
\label{sec:Numerical issues} In this section, we use
some
numerical experiments to demonstrate the efficiency of our PMM algorithm for the square-root
regression problems. We
implemented the algorithm in MATLAB R2017a. All runs were performed
on a PC (Intel Core 2 Duo 2.6 GHz with 4 GB RAM).
We tested our algorithm on two types of data sets. The first set consists of synthetic data generated randomly in the high-sample-low-dimension setting. That is,
\begin{eqnarray*}
b=X\truebeta+\varsigma\varepsilon,\quad\varepsilon\sim N(0,I).
\end{eqnarray*}
Each row of the input data $X\in\R^{m\times n}$ is generated randomly from the multivariate normal
distribution $N(0,\Sigma)$ with $\Sigma$ as the covariance matrix.
Now we present four examples which are similar to that in \cite{Zuohui2005}.
For each instance, we generate 8000 observations for the training data set and 2000 observations for the validation data set.
\begin{enumerate}
\item[(a)]In example 1, the problem has 800 predictors. Let $\beta=(3,1.5,0,0,2,0,0,0)$ and $\truebeta=(\underbrace{\beta,\ldots,\beta}_{100})^{T}$. The parameter $\varsigma$ is set to  $3$ and the pairwise correlation between the $i$-th predictor and the $j$-th predictor is set to be $\Sigma_{ij}=0.5^{|i-j|}$.
\item[(b)]In example 2, the setting is the same as that in example 1 except that $\truebeta=(\underbrace{\beta,\ldots,\beta}_{400})^{T}$ with the vector $\beta=(0,1)$.
\item[(c)]In example 3, we set $\truebeta=(\underbrace{\beta,\ldots,\beta}_{200})^{T}$ with the vector $\beta=(0,1)$, $\varsigma=15$ and $\Sigma_{ij}=0.8^{|i-j|}$.
\item[(d)]In example 4, the problem has 800 predictors. We choose $\truebeta=(\underbrace{3,\ldots,3}_{300},\underbrace{0,\ldots,0}_{500})$ and $\varsigma=3$. Let $X_{i}$ be the $i$-th predictor of $X$.
For $i\leq 300$, $X_i$ is  generated as follows:
    \begin{eqnarray*}
    X_{i}&=&Z_{1}+\tilde{\varepsilon}_{i},\quad Z_{1}\sim N(0,I),\quad i=1,\ldots,100,\\
    X_{i}&=&Z_{2}+\tilde{\varepsilon}_{i},\quad Z_{2}\sim N(0,I),\quad i=101,\ldots,200,\\
    X_{i}&=&Z_{3}+\tilde{\varepsilon}_{i},\quad Z_{3}\sim N(0,I),\quad i=201,\ldots,300,
    \end{eqnarray*}
    with $\tilde{\varepsilon}_{i}\sim N(0,0.01I)$, $i=1,\ldots,300$.
    For $i > 300$, the predictor $X_{i}$ is just white noise, i.e., $X_{i}\sim N(0,I)$.
\end{enumerate}

We also evaluate our
algorithm on some large scale LIBSVM data sets $(X,b)$
\cite{Chang2011} which are obtained from the UCI data repository
\cite{Lichman}. As in \cite{LiSunToh2018}, we use the
method in \cite{Huang2010} to expand the features of these data sets
by using polynomial basis functions. The last digit in the names
of the data sets, abalone7, bodyfat7, housing7, mpg7 and space9,
indicate the order of the polynomial used to expand the features.
The number of nonzero elements of a vector is defined as the minimal
$k$ such that
\begin{eqnarray*}
\sum_{i=1}^{k}|\check{\beta}_{i}|\geq 0.9999\|\beta\|_{1},
\end{eqnarray*}
where $\check{\beta}$ is obtained by sorting $\beta$ such that
$|\check{\beta}_{1}|\geq|\check{\beta}_{2}|\geq\ldots\geq|\check{\beta}_{n}|$.

In all the experiments, the parameter $\lambda$ is set to $\lambda=\lambda_{c}\Lambda$,
where $\Lambda=1.1\Phi^{-1}(1-0.05/(2n))$ with $\Phi$ being
the cumulative normal distribution function and $\Lambda$ is the theoretical choice recommended in \cite{BelloniCW2011} to compute a specific
coefficient estimate.

%%%%%%%%%%%%%%%%%%%%%%
\subsection{Numerical experiments for the convex square-root regression problems}

In this section, we compare the performances of the alternating direction method of multipliers
(ADMM) and our stage I algorithm for solving the convex square-root regression
problem (\ref{primal-problem-convex}). For comparison purpose, we adopt the widely used ADMM algorithms for both the primal and dual problems of (\ref{primal-problem-convex}).
For convenience, we use pADMM to denote the ADMM applied to the primal problem, dADMM to denote the ADMM applied to the dual problem, and PMM to denote
our stage I algorithm for solving the convex square-root regression problem (\ref{primal-problem-convex}).

%%%%%%%%%%%%%%
\subsubsection{The ADMM for the problem (\ref{primal-problem-convex})}

In this subsection, we describe the implementation details of the ADMM for the problem (\ref{primal-problem-convex}).
The convex problem (\ref{primal-problem-convex}) can be written equivalently as
\begin{eqnarray}\label{primal-problem-convex-1}
\min_{\beta,z\in\R^{n},y\in\R^{m}}\left\{\|y\|+ \lambda \pone(z)\,\Big|\,X\beta-y=b,\beta-z=0\right\}.
\end{eqnarray}
The dual problem corresponding
to (\ref{primal-problem-convex-1}) has the following form
\begin{eqnarray}\label{dual-problem-convex}
\min_{u,w\in\R^{m},v\in\R^{n}}\left\{\delta_{B}(w)+(\lambda \pone)^{*}(v)+\langle u,b\rangle\Big|X^{T}u+v=0,-u+w=0\right\}.
\end{eqnarray}
Given $\zeta>0$, the augmented Lagrangian functions corresponding to (\ref{primal-problem-convex-1}) and (\ref{dual-problem-convex}) are given by
\begin{eqnarray*}
\mathcal{L}_{\zeta}(\beta,y,z;u,v)&:=&\|y\|+\lambda \pone(z)+\langle u,X\beta-y-b\rangle+\frac{\zeta}{2}\|X\beta-y-b\|^{2}\\
&&+\langle v,\beta-z\rangle+\frac{\zeta}{2}\|\beta-z\|^{2},\\
\tilde{\mathcal{L}}_{\zeta}(u,v,w;\beta,y)&:=&\delta_{B}(w)+(\lambda \pone)^{*}(v)+\langle u,b\rangle-\langle\beta,X^{T}u+v\rangle+\frac{\zeta}{2}\|X^{T}u+v\|^{2}\\
&&-\langle y,-u+w\rangle+\frac{\zeta}{2}\|-u+w\|^{2},
\end{eqnarray*}
respectively. Based on the above augmented Lagrangian functions,
 the ADMMs (\cite{Eckstein1992,Gabay1976}) for solving (\ref{primal-problem-convex-1}) and (\ref{dual-problem-convex}) are given as follows.

\bigskip
\ni\fbox{\parbox{\textwidth}{\noindent{\bf Algorithm pADMM for the primal problem (\ref{primal-problem-convex-1}).}
Let $\rho\in(0,(1+\sqrt{5})/2)$, $\zeta>0$ be given parameters. Choose $(y^{0},z^{0},u^{0},v^{0})\in\R^{m}\times\R^{n}\times\R^{m}\times\R^{n}$,
set $k=0$ and iterate.
\begin{description}
\item [Step 1.] Compute
\begin{eqnarray*}
\beta^{k+1}&=&\mathop{\mbox{argmin}}_{\beta\in\R^{n}}\left\{\mathcal{L}_{\zeta}(\beta,y^{k},z^{k};u^{k},v^{k})\right\}\\
&=&(I_{n}+X^{T}X)^{-1}(z^{k}-\zeta^{-1}v^{k}+X^{T}(y^{k}+b-\zeta^{-1}u^{k})),\\
(y^{k+1},z^{k+1})&=&\mathop{\mbox{argmin}}_{y\in\R^{m},z\in\R^{n}}\left\{\mathcal{L}_{\zeta}(\beta^{k+1},y,z;u^{k},v^{k})\right\}\\
&=&\left(\mbox{Prox}_{\zeta^{-1}\|\cdot\|}(X\beta^{k+1}-b+\zeta^{-1}u^{k}),\mbox{Prox}_{\zeta^{-1}\lambda \pone}(\beta^{k+1}+\zeta^{-1}v^{k})\right).
\end{eqnarray*}
\item [Step 2.] Update
\begin{equation*}
u^{k+1}=u^{k}+\rho\zeta(X\beta^{k+1}-y^{k+1}-b),\
v^{k+1}=v^{k}+\rho\zeta(\beta^{k+1}-z^{k+1}).
\end{equation*}
If the prescribed stopping
criterion is satisfied, terminate; otherwise
%update $\zeta_{k+1}\uparrow\zeta_{\infty}$ and
return to Step 1 with $k=k+1$.
\end{description}}}

\bigskip
\ni\fbox{\parbox{\textwidth}{\noindent{\bf Algorithm dADMM for the dual problem (\ref{dual-problem-convex}).}
Let $\rho\in(0,(1+\sqrt{5})/2)$, $\zeta>0$ be given parameters. Choose $(v^{0},w^{0},\beta^{0},y^{0})\in\R^{n}\times\R^{m}\times\R^{n}\times\R^{m}$, set $k=0$ and iterate.
\begin{description}
\item [Step 1.] Compute
\begin{eqnarray*}
u^{k+1}&=&\mathop{\mbox{argmin}}_{u\in\R^{m}}\left\{\tilde{\mathcal{L}}_{\zeta}(u,v^{k},w^{k};\beta^{k},y^{k})\right\}\\
&=&(I_{m}+XX^{T})^{-1}(w^{k}-\zeta^{-1}y^{k}+X(-v^{k}+\zeta^{-1}\beta^{k})-b),\\
(v^{k+1},w^{k+1})&=&\mathop{\mbox{argmin}}_{v\in\R^{n},w\in\R^{m}}\left\{\tilde{\mathcal{L}}_{\zeta}(u^{k+1},v,w;\beta^{k},y^{k})\right\}\\
&=&\left(\mbox{Prox}_{\zeta^{-1}(\lambda \pone)^{*}}(\zeta^{-1}\beta^{k}-X^{T}u^{k+1}),\mbox{Prox}_{\zeta^{-1}\delta_{B}}(\zeta^{-1}y^{k}+u^{k+1})\right).
\end{eqnarray*}
\item [Step 2.] Update
\begin{equation*}
\beta^{k+1}=\beta^{k}-\rho\zeta(X^{T}u^{k+1}+v^{k+1}),\
y^{k+1}=y^{k}-\rho\zeta(-u^{k+1}+w^{k+1}).
\end{equation*}
If the prescribed stopping
criterion is satisfied, terminate; otherwise
%update $\zeta_{k+1}\uparrow\zeta_{\infty}$ and
return to Step 1 with $k=k+1$.
\end{description}}}
\bigskip

In the pADMM and dADMM, we set the parameter $\rho=1.618$ and solve the linear system
in Step 1 of pADMM and dADMM by using the Sherman-Morrison-Woodbury formula \cite{Golub1996}
if it is necessary, i.e.,
\begin{eqnarray*}
\left(I_{n}+X^{T}X\right)^{-1}&=&I_{m}-X\left(I_{m}+XX^{T}\right)^{-1}X^{T},\\
\left(I_{m}+XX^{T}\right)^{-1}&=&I_{n}-X^{T}\left(I_{n}+X^{T}X\right)^{-1}X.
\end{eqnarray*}
Depending on the dimension $n,m$ of the problem, we either
solve the linear system with coefficient matrix $I_m + XX^T$ (or $I_n + X^{T}X$) by Cholesky factorization or
by an iterative solver such as the preconditioned conjugate gradient (PCG) method.
We should mention that when the latter approach is used, the linear system only
needs to be solved to a sufficient level of accuracy that depend on the progress
of the algorithm without sacrificing the convergence of the ADMMs.
For the details,
we refer the reader to \cite{ChenST2017}.

%%%%%%%%%%%%%%%%%%%%%%%5
\subsubsection{Stopping criteria}
In order to measure the accuracy of an approximate optimal solution $\beta$, we
use the relative duality gap
defined by
\begin{eqnarray*}
\eta_{G}:=\frac{|\mbox{pobj}-\mbox{dobj}|}{1+|\mbox{pobj}|+|\mbox{dobj}|},
\end{eqnarray*}
where
$
\mbox{pobj}:=\|X\beta-b\|+\lambda \pone(\beta),\; \mbox{dobj}:=-\langle u,b\rangle
$
are the primal and dual objective values, respectively.
We also adopt the relative KKT residual
\begin{eqnarray*}
\eta_{\rm kkt}:=\frac{\left\|\beta-\mbox{Prox}_{\lambda \pone}\left(\beta-\frac{X^{T}(X\beta-b)}{\|X\beta-b\|}\right)\right\|}{1+\|\beta\|+\frac{\left\|X^{T}(X\beta-b)\right\|}{\|X\beta-b\|}}
\end{eqnarray*}
to measure the accuracy of an approximate optimal solution $\beta$.
For a given tolerance, our stage I algorithm is terminated if
\begin{eqnarray}\label{stop-cond-1}
\eta_{\rm kkt}< \epsilon_{\rm kkt}  = 10^{-6},
\end{eqnarray}
or the number of iterations reaches the maximum of 200 while the ADMMs are  terminated if (\ref{stop-cond-1}) is satisfied or the number of iterations reaches the maximum of 10000. All the algorithms are stopped if they reach the pre-set maximum running time of 4 hours.
% In our numerical experiments, we set $\epsilon_{\rm kkt}=10^{-6}$.

%%%%%%%%%%%%%%%%%%%%%%%%%%
\subsubsection{Numerical results for the srLasso problem \eqref{eq:SRLasso}}

Here we compare the performance of
 different methods for solving the convex problem
 \eqref{eq:SRLasso}.
 In
\cite{Sara2017}, it adopted the R package Flare \cite{Liuhan2015} to solve the srLasso problem \eqref{eq:SRLasso}. As the algorithm in Flare is in fact the pADMM with unit steplength,
we first compare our own implementation of the pADMM with that in the Flare package. For a
fair comparison, our pADMM is also written in R.
%The precision parameter and the number of iterations of the Flare package are set as $10^{-6}$ and the default value $10^{5}$, respectively.
Since the stopping criterion of the Flare package is not stated explicitly, we first run the Flare package to obtain a primal objective value and then run our pADMM, which is terminated as soon as our primal objective value is smaller than that obtained by Flare.
We note that since \eqref{eq:SRLasso} is an unconstrained optimization problem, it is
meaningful to compare the objective function values obtained by Flare and our pADMM.

We report the numerical results in Tables 1 and 2. We report the problem name (probname), the number of samples ($m$) and features ($n$), $\lambda_{c}$, the primal objective value (pobj), and the computation time (time) in the format of
``hours:minutes:seconds''. The symbol ``--'' in Table 2 means that the Flare package fails to solve the problem due to excessive memory requirement.
%In this case, the pADMM is stopped if (\ref{stop-cond-1}) is satisfied or the number of iterations reaches the maximum 5000.
From Tables 1 and 2, we can observe that our pADMM is clearly faster than the Flare package. A possible cause of this difference may lie in the different strategies for
dynamically updating the parameter $\zeta$ in the practical implementations of the pADMM.
As our implementation of the pADMM is much more efficient than that in the Flare package,
in the subsequent experiments,
we will not compare the performance of our PMM algorithm with the Flare package
but with our own pADMM.

\begin{table}[!htbp]
\setlength{\belowcaptionskip}{10pt}
\parbox{.6\textwidth}{\caption{The performance of the Flare package and {our} pADMM on synthetic datasets for the srLasso problem.}}\scriptsize\centering\label{table-compare-convex-Flare-random}
\begin{tabular}{|c|c|c|c|c|c|}
\hline
\multirow{1}{*}{probname}&\multirow{2}{*}{$\lambda_{c}$}&\multicolumn{2}{c|}{pobj}&\multicolumn{2}{c|}{time}\\
\cline{3-6} m; n & & Flare & pADMM & Flare & pADMM\\\hline
exmp1 & 1.0 & 3.8876+3 & 3.5799+3 & 11:26 & 12 \\
8000;800 & 0.5 & 3.0501+3 & 1.9174+3 & 21:09 & 13\\
  & 0.1 & 1.0487+3 & 5.8738+2 & 28:42 & 16\\\hline
exmp2 & 1.0 & 2.2422+3 & 2.2419+3 & 14:09 & 19\\
8000;800 & 0.5 &1.8050+3 & 1.2811+3 & 27:18 & 11\\
 & 0.1 & 5.6150+2 & 4.6013+2 & 27:37 & 09\\\hline
exmp3 & 1.0 & 2.4758+3 & 2.4569+3& 10:05 & 07\\
 8000;400 & 0.5 & 1.9819+3 & 1.9421+3& 7:26 & 07\ \\
 & 0.1 & 1.4888+3 & 1.4438+3 & 7:14 & 05\ \\\hline
exmp4 & 1.0 & 1.1210+4 & 1.1205+4 & 29:11 & 20:16\\
 8000;4000 & 0.5 & 1.0165+4 & 1.0165+4 & 1:43:48 & 21:48\\
 & 0.1 & 7.6846+3 & 3.4069+3 & 3:11:27 & 5:12\ \ \\
\hline
\end{tabular}
\end{table}

\begin{table}[!h]
\setlength{\belowcaptionskip}{10pt}
\parbox{.6\textwidth}{\caption{The performance of the Flare package and our pADMM on UCI datasets for the srLasso problem.}}\scriptsize\centering\label{table-compare-convex-Flare-UCI}
\begin{tabular}{|c|c|c|c|c|c|}
\hline
\multirow{1}{*}{probname}&\multirow{2}{*}{$\lambda_{c}$}&\multicolumn{2}{c|}{pobj}&\multicolumn{2}{c|}{time}\\
\cline{3-6} m; n & & Flare & pADMM & Flare & pADMM\\\hline
abalone.scale.expanded7 & 1.0 & -- & 2.3852+2 & -- & 25:57 \\
4177;6435 & 0.5 & -- & 2.0312+2 & -- & 25:32\\
  & 0.1 & -- & 1.5586+2 & -- & 26:29\\\hline
mpg.scale.expanded7 & 1.0 & 2.3550+2 & 2.3544+2 & 1:00 & 04\\
392;3432 & 0.5 & 1.5856+2 & 1.5831+2 & 57 & 03\\
 & 0.1 & 7.8656+1 & 7.8616+1 & 1:06 & 03\\\hline
 space.ga.scale.expanded9 & 1.0 & 1.3113+1 & 1.3113+1 & 12:59 & 5:19\\
 3107;5005 & 0.5 & 2.2419+1 & 2.1607+1& 9:01 & 2:00\\
 & 0.1 & 1.2950+1 & 1.1999+1 & 6:13 & 3:00\\
\hline
\end{tabular}
\end{table}

Next we conduct numerical experiments to
evaluate the performance of the pADMM, dADMM and PMM.
For the numerical results, besides the results reported in Tables 1 and 2, we also report the relative KKT residual ($\eta_{\rm kkt}$), the relative duality gap ($\eta_{G}$),  the number of nonzero elements of $\beta$ (nnz), the mean square error defined by $\|\beta-\ddot{\beta}\|^{2}/n$ (MSE), and the percentage (P) of the nonzero positions of $\ddot{\beta}$ that are picked up by $\beta$. The last three results were obtained from the PMM algorithm.  In the implementation of the pADMM and dADMM, we first compute the (sparse) Cholesky decomposition
of $I_{n}+X^{T}X$ or $I_{m}+XX^{T}$ and then solve the linear system of equations
in each iteration by using the pre-computed Cholesky factor.

\begin{table}[!h]
\begin{center}
%\smalltiny
\tiny
\parbox{0.7\textwidth}{\caption{The performance of different algorithms on synthetic datasets for the srLasso problem. In the table, ``a''=PMM, ``b"=pADMM, ``c''=dADMM.}}\label{table-compare-random}
\begin{tabular}{|p{1cm}<{\centering}|p{0.2cm}<{\centering}|p{0.4cm}<{\centering}|p{0.45cm}<{\centering}|p{0.4cm}<{\centering}|p{0.4cm}<{\centering}|p{0.5cm}<{\centering}|p{0.4cm}<{\centering}|p{0.4cm}<{\centering}|p{0.9cm}<{\centering}|p{0.9cm}<{\centering}|p{0.9cm}<{\centering}|p{0.2cm}<{\centering}|p{0.3cm}<{\centering}|p{0.3cm}<{\centering}|p{0.85cm}<{\centering}|p{0.4cm}<{\centering}|}
\hline
\input{table-compare-random-convex-3.dat}
\end{tabular}
\end{center}
\end{table}

\begin{table}[!h]
\tiny
\begin{center}
\setlength{\belowcaptionskip}{10pt}
\parbox{.7\textwidth}{\caption{The performance of different algorithms
on UCI datasets for the srLasso problem. In the table, ``a''=PMM, ``b"=pADMM, ``c''=dADMM.}}\centering\label{table-compare-UCI}
\begin{tabular}{|p{1.8cm}<{\centering}|p{0.2cm}<{\centering}|p{0.3cm}<{\centering}|p{0.45cm}<{\centering}|p{0.4cm}<{\centering}|p{0.4cm}<{\centering}|p{0.45cm}<{\centering}|p{0.4cm}<{\centering}|p{0.48cm}<{\centering}|p{0.95cm}<{\centering}|p{0.95cm}<{\centering}|p{0.95cm}<{\centering}|p{0.3cm}<{\centering}|p{0.6cm}<{\centering}|p{0.6cm}<{\centering}|}
\hline
\input{table-compare-convex-3.dat}
\end{tabular}
\end{center}
\end{table}

Tables 3 and 4 show the performance of the three algorithms. For the synthetic datasets, the pADMM is more efficient than the dADMM in almost all cases. Furthermore, we can see that our PMM algorithm can solve all the problems to the required accuracy. The PMM algorithm not only takes much less time than the pADMM or dADMM does but also obtains
more accurate solutions (in terms of $\eta_{\rm kkt}$) in almost all cases.

%%%%%%%%%%%%%%%%%%%%%%%
\subsection{Numerical experiments for the square-root regression problems with nonconvex regularizers}

In this section, we compare the performance of the ADMM and our PMM algorithm for solving the nonconvex square-root regression problem (\ref{eq:prob}). The relative KKT residual
\begin{eqnarray}\label{stop-cond-2}
\tilde{\eta}_{\rm kkt}:=\frac{\left\|\beta-\mbox{Prox}_{\lambda \pone-\ptwo}\left(\beta-\frac{X^{T}(X\beta-b)}{\|X\beta-b\|}\right)\right\|}{1+\|\beta\|+\frac{\|X^{T}(X\beta-b)\|}{\|X\beta-b\|}}
\end{eqnarray}
is adopted to measure the accuracy of an approximate optimal solution $\beta$.
%We stop the tested algorithms according to some specified stopping criteria.
In our PMM algorithm, stage I is implemented to generate an initial point for stage I\!I and is stopped if $\eta_{\rm kkt}<10^{-4}$. The tested
algorithms will be terminated if $\tilde{\eta}_{\rm kkt}<\tilde{\epsilon}_{\rm kkt}=10^{-6}$.
%In our experiments, we set $\tilde{\epsilon}_{\rm kkt}=10^{-6}$.
In addition, the algorithms are also stopped when they reach the pre-set maximum number of iterations (200 for stage I\!I of the PMM and 10000 for the ADMM) or the pre-set maximum running time of 4 hours. For each
synthetic data set, the models are fitted on the training data set and the validation data set
is used to select the regularization parameter $\lambda_{c}$. For each UCI data set, we adopt a tenfold cross validation to find the regularization parameter. The PMM algorithm is used to perform the cross validation.

%%%%%%%%%%%%%%%%%%%%%%
\subsubsection{The ADMM for the problem (\ref{eq:prob})}

To describe the ADMM implemented (which is not guaranteed to  converge though due to the nonconvexity) for solving the nonconvex square-root regression problem (\ref{eq:prob}), we first reformulate it to the following constrained problem:
\begin{eqnarray}\label{primal-problem-nonconvex-1}
\min_{\beta,z\in\R^{n},y\in\R^{m}}\left\{\|y\|+ \lambda \pone(z)-\ptwo(z)\,\Big|\,X\beta-y=b,\beta-z=0\right\}.
\end{eqnarray}
For $\zeta>0$, the augmented Lagrangian function of (\ref{primal-problem-nonconvex-1}) can be written as
\begin{eqnarray*}
L_{\zeta}(\beta,y,z;u,v)&:=&\|y\|+\lambda \pone(z)-\ptwo(z)+\langle u,X\beta-y-b\rangle+\frac{\zeta}{2}\|X\beta-y-b\|^{2}\\
&&+\langle v,\beta-z\rangle+\frac{\zeta}{2}\|\beta-z\|^{2}.
\end{eqnarray*}
The template of the ADMM for solving the problem (\ref{eq:prob})
is given as the following form.

\bigskip
\ni\fbox{\parbox{\textwidth}{\noindent{\bf Algorithm ADMM for the problem (\ref{eq:prob}).}
Let $\zeta>0$ be a given parameter. Choose $(y^{0},z^{0},u^{0},v^{0})\in\R^{m}\times\R^{n}\times\R^{m}\times\R^{n}$, set $k=0$ and iterate.
\begin{description}
\item [Step 1.] Compute
\begin{align*}
\beta^{k+1}&=\mathop{\mbox{argmin}}_{\beta\in\R^{n}}\left\{L_{\zeta}(\beta,y^{k},z^{k};u^{k},v^{k})\right\}\\
&=(I_n+X^{T}X)^{-1}(z^{k}-\zeta^{-1}v^{k}+X^{T}(y^{k}+b-\zeta^{-1}u^{k})),\\
(y^{k+1},z^{k+1})&=\mathop{\mbox{argmin}}_{y\in\R^{m},z\in\R^{n}}\left\{L_{\zeta}(\beta^{k+1},y,z;u^{k},v^{k})\right\}\\
&=\left(\mbox{Prox}_{\zeta^{-1}\|\cdot\|}(X\beta^{k+1}-b+\zeta^{-1}u^{k}),\mbox{Prox}_{\zeta^{-1}(\lambda \pone-\ptwo)}(\beta^{k+1}+\zeta^{-1}v^{k})\right).
\end{align*}
\item [Step 2.] Update
\begin{equation*}
u^{k+1}=u^{k}+\zeta(X\beta^{k+1}-y^{k+1}-b),\ v^{k+1}=v^{k}+\zeta(\beta^{k+1}-z^{k+1}).
\end{equation*}
If the prescribed stopping
criterion is satisfied, terminate; otherwise
%update $\zeta_{k+1}\uparrow\zeta_{\infty}$ and
 return to Step 1 with $k=k+1$.
\end{description}}}
\bigskip

%%%%%%%%%%%%%%%%%%%%%%
\subsubsection{Numerical experiments for the square-root regression problems with SCAD regularizations}

The SCAD regularization involves a concave function $p_\lambda$, proposed in
\cite{Fan2001}, that has the following properties: $p_{\lambda}(0)=0$ and for $|t|>0$,
\begin{eqnarray*}
p'_{\lambda}(|t|)=\left\{\begin{array}{lll}\lambda,&\mbox{if}\
|t|\leq\lambda,\\
\frac{(a_{s}\lambda-|t|)_{+}}{a_{s}-1},&\mbox{otherwise},\end{array}\right.
\end{eqnarray*}
for some given parameter $a_{s}>2$. In the above,
$(a_{s}\lambda-|t|)_{+}$ denotes the positive part of
$a_{s}\lambda-|t|$. We can reformulate the expression of the SCAD
regularization function as $\lambda \pone(\beta)-\ptwo(\beta)$
with $\pone(\beta)=\|\beta\|_{1}$ and
\begin{eqnarray*}
\ptwo(\beta)=\sum_{i=1}^{n}\ptwo^{\rm scad}(\beta_{i};a_{s},\lambda),\
\ptwo^{\rm scad}(t;a_{s},\lambda)=\left\{\begin{array}{ll}0,&\mbox{if}\quad
|t|\leq \lambda,\\
\frac{(|t|-\lambda)^2}{2(a_{s}-1)},&\mbox{if}\quad \lambda\leq
|t|\leq a_{s}\lambda,\\
\lambda |t|-\frac{a_{s}+1}{2}\lambda^2,&\mbox{if}\quad
|t|>a_{s}\lambda.
\end{array}\right.
\end{eqnarray*}
The function $\ptwo(\beta)$ is continuously differentiable with
\begin{eqnarray*}
\frac{\partial \ptwo(\beta)}{\partial\beta_{i}}=\left\{\begin{array}{ll}0,&\mbox{if}\ |\beta_{i}|\leq\lambda,\\
\frac{\mbox{sign}(\beta_{i})(|\beta_{i}|-\lambda)}{a_{s}-1},&\mbox{if}\ \lambda<|\beta_{i}|\leq a_{s}\lambda,\\
\lambda\mbox{sign}(\beta_{i}),&\mbox{if}\ |\beta_{i}|>a_{s}\lambda.\end{array}\right.
\end{eqnarray*}
We can see that the SCAD regularization function associated with $\beta_{i}$ is increasing and concave in
$[0,+\infty)$. It has been shown in \cite{Fan2001} that
the SCAD regularization usually performs better than the classical $\ell_1$ regularization
in
selecting significant variables without creating excessive biases.

The performance of the PMM algorithm and ADMM for the SCAD regularization with
$a_{s}=3.7$ are listed in Tables 5 and 6. We can see that in most cases, the PMM algorithm is not only much more efficient than the ADMM, but it can also obtain
better objective function values. Although the objective value of the ADMM is less than that of the PMM algorithm in the housing.scale.expanded7 data set, the solution of the PMM algorithm is more sparse than that of the ADMM with nnz being 62 versus 68777.
%The ADMM is comparable to the PMM algorithm in those cases because we only need to do the (sparse) Cholesky decomposition once.

%\newpage
%\begin{table}[!htbp]
%\setlength{\belowcaptionskip}{10pt}
%\parbox{.6\textwidth}{\caption{The performances of the PMM algorithm on UCI datasets for srLasso with different sub-tolerances.}}\scriptsize\centering\label{table-compare-SCAD-UCI}
%\begin{tabular}{|c|c|c|c|c|c|c|c|c|c|}
%\hline
%%\multirow{1}{*}{Probname}&\multirow{1}{*}{$\lambda_{c}$}&\multicolumn{1}{c|}{nnz}&\multicolumn{1}{c|}{$\eta_{kkt}$}&\multicolumn{1}{c|}{Pobj}&\multicolumn{1}{c|}{Time}\\
%%\cline{3-6} m; n & & $a\ |\ b$ & $a\ |\ b$ & $a\ |\ b$ & $a\ |\ b$\\
%\input{table-compare-convex-subtol.dat}
%\end{tabular}
%\end{table}
%
%\newpage
%
%\begin{table}[!htbp]
%\setlength{\belowcaptionskip}{10pt}
%\parbox{.6\textwidth}{\caption{The performances of the PMM algorithm on UCI datasets for the SCAD regularization with different sub-tolerances. }}\scriptsize\centering\label{table-compare-SCAD-UCI}
%\begin{tabular}{|c|c|c|c|c|c|c|c|c|}
%\hline
%%\multirow{1}{*}{Probname}&\multirow{1}{*}{$\lambda_{c}$}&\multicolumn{1}{c|}{nnz}&\multicolumn{1}{c|}{$\eta_{kkt}$}&\multicolumn{1}{c|}{Pobj}&\multicolumn{1}{c|}{Time}\\
%%\cline{3-6} m; n & & $a\ |\ b$ & $a\ |\ b$ & $a\ |\ b$ & $a\ |\ b$\\
%\input{table-compare-nonconvex-subtol.dat}
%\end{tabular}
%\end{table}
%
%\newpage

\begin{table}[!h]
\setlength{\belowcaptionskip}{10pt}
\parbox{.6\textwidth}{\caption{The performance of the ADMM and PMM on synthetic datasets for the SCAD regularization. In the table, ``a"=PMM, ``b''=ADMM.}}\scriptsize\centering\label{table-compare-SCAD-random}
\begin{tabular}{|c|c|c|c|c|c|c|c|c|c|c|c|}
\hline
%\multirow{1}{*}{Probname}&\multirow{1}{*}{$\lambda_{c}$}&\multicolumn{1}{c|}{nnz}&\multicolumn{1}{c|}{$\eta_{kkt}$}&\multicolumn{1}{c|}{Pobj}&\multicolumn{1}{c|}{Time}\\
%\cline{3-6} m; n & & $a\ |\ b$ & $a\ |\ b$ & $a\ |\ b$ & $a\ |\ b$\\
\input{table-compare-random-4-scad.dat}
\end{tabular}
\end{table}

\begin{table}[!h]
\setlength{\belowcaptionskip}{10pt}
\parbox{.6\textwidth}{\caption{The performance of the ADMM and PMM on UCI datasets for the SCAD regularization. In the table,``a"=PMM, ``b''=ADMM.}}\scriptsize\centering\label{table-compare-SCAD-UCI}
\begin{tabular}{|c|c|c|c|c|c|c|c|c|}
\hline
%\multirow{1}{*}{Probname}&\multirow{1}{*}{$\lambda_{c}$}&\multicolumn{1}{c|}{nnz}&\multicolumn{1}{c|}{$\eta_{kkt}$}&\multicolumn{1}{c|}{Pobj}&\multicolumn{1}{c|}{Time}\\
%\cline{3-6} m; n & & $a\ |\ b$ & $a\ |\ b$ & $a\ |\ b$ & $a\ |\ b$\\
\input{table-compare-UCI-4-scad.dat}
\end{tabular}
\end{table}

%%%%%%%%%%%%%%%%%%%%%%%%%
\subsubsection{Numerical experiments for the square-root regression problems with  MCP regularizations}

In this subsection, we consider the regularization by a minimax concave penalty (MCP) function \cite{Zhang2010}.
%Given a certain threshold for variable selection and unbiasedness,
%\pink{a MCP function} provides the convexity of the penalized loss in sparse regions to the %greatest extent.
For two positive parameters $a_{m}>2$ and $\lambda$, the MCP regularization can be defined as $\lambda \pone(\beta)-\ptwo(\beta)$ with $\pone(\beta)=2\|\beta\|_{1}$ and
\begin{eqnarray*}
\ptwo(\beta)=\sum_{i=1}^{n}\ptwo^{\rm mcp}(\beta_{i};a_{m},\lambda),\
\ptwo^{\rm mcp}(t;a_{m},\lambda)=\left\{\begin{array}{ll}
\frac{t^{2}}{a_{m}},& \mbox{if}\ |t|\leq a_{m}\lambda,\\[5pt]
2\lambda|t|-a_{m}\lambda^2,&\mbox{if}\ |t|>a_{m}\lambda.\end{array}\right.
\end{eqnarray*}
The function $\ptwo(\beta)$ is continuously differentiable with its derivative given by
\begin{eqnarray*}
\frac{\partial \ptwo(\beta)}{\partial\beta_{i}}=\left\{\begin{array}{ll}\frac{2\beta_{i}}{a_{m}},&\mbox{if}\ |\beta_{i}|\leq a_{m}\lambda,\\[5pt]
2\lambda\mbox{sign}(\beta_{i}),&\mbox{if}\ |\beta_{i}|>a_{m}\lambda.\end{array}\right.
\end{eqnarray*}
We evaluate  the performance of our PMM algorithm on the same
set of problems as in the
last subsection with the MCP regularization. The numerical results are presented in Tables 7 and 8. In this case, we set the parameter $a_{m}=3.7$.

\begin{table}[!h]
\setlength{\belowcaptionskip}{10pt}
\parbox{.6\textwidth}{\caption{The performance of the ADMM and PMM on synthetic datasets for the MCP regularization. In the table, ``a"=PMM, ``b''=ADMM.}}\scriptsize\centering\label{table-compare-MCP-random}
\begin{tabular}{|c|c|c|c|c|c|c|c|c|c|c|}
\hline
%\multirow{1}{*}{Probname}&\multirow{1}{*}{$\lambda_{c}$}&\multicolumn{1}{c|}{nnz}&\multicolumn{1}{c|}{$\eta_{kkt}$}&\multicolumn{1}{c|}{Pobj}&\multicolumn{1}{c|}{Time}\\
%\cline{3-6} m; n & & $a\ |\ b$ & $a\ |\ b$ & $a\ |\ b$ & $a\ |\ b$\\
\input{table-compare-random-4-mcp.dat}

\end{tabular}
\end{table}

\begin{table}[!h]
\setlength{\belowcaptionskip}{10pt}
\parbox{.6\textwidth}{\caption{The performance of the ADMM and PMM on UCI datasets for the MCP regularization. In the table, ``a"=PMM, ``b''=ADMM.}}\scriptsize\centering\label{table-compare-MCP-UCI}
\begin{tabular}{|c|c|c|c|c|c|c|c|c|}
\hline
%\multirow{1}{*}{Probname}&\multirow{1}{*}{$\lambda_{c}$}&\multicolumn{1}{c|}{nnz}&\multicolumn{1}{c|}{$\eta_{kkt}$}&\multicolumn{1}{c|}{Pobj}&\multicolumn{1}{c|}{Time}\\
%\cline{3-6} m; n & & $a\ |\ b$ & $a\ |\ b$ & $a\ |\ b$ & $a\ |\ b$\\
\input{table-compare-UCI-4-mcp.dat}

\end{tabular}
\end{table}

From the numerical results, one can see the efficiency and power of our SSN method based
PMM algorithm. Note that though for the  abalone.scale.expanded7 data set  the objective value obtained by the ADMM is less than that obtained  the PMM algorithm, the solution obtained by  the PMM algorithm is more sparse than that by the ADMM with nnz being 55 versus 1039.
Overall,
our PMM algorithm is clearly more efficient and accurate than the ADMM on the
tested datasets.

We have mentioned in the introduction that the scaled Lasso problem is equivalent to the srLasso problem \eqref{eq:SRLasso}. However, in order to solve the scaled Lasso problem, we have to call an algorithm several times to solve the standard Lasso subproblems.
However, by  handling the srLasso problem \eqref{eq:SRLasso} directly, our algorithm is as fast as the highly efficient algorithm, LassoNAL \cite{LiSunToh2018},
for solving a single standard Lasso problem.

%%%%%%%%%%%%%%%%%%%%%%%%%%%
\section{Conclusion}
\label{sec:Conclusion}
In this paper, we proposed a two stage PMM algorithm
to solve the square-root regression problems with nonconvex regularizations.
We are able to achieve impressive computational efficiency for our algorithm
by designing an innovative proximal majorization framework
 for the convex subproblem arising in each PMM iteration so that it can be solved
 via its dual by
the SSN method. We presented the oracle property of the problem in stage I and analyzed the convergence of the PMM algorithm with its subproblems solved inexactly. Extensive numerical experiments have
demonstrated the efficiency of our PMM algorithm
when compared to other natural alternative algorithms such as the ADMM based algorithms
in solving the problem of interest.

From the superior performance of our algorithm, it is natural for us
to consider applying a similar proximal majorization-minimization
algorithmic framework to design efficient
algorithms to solve
other square-root regression problems with structured sparsity
requirements such a group sparsity in the regression coefficients \cite{BuneaLS}.
We leave such an investigation as a future research topic.

%%%%%%%%%%%%%%%%%%%%%%%%%
\section{Appendix}
In this appendix, we first provide the proofs for Lemma 3.1, 3.3 and  3.4. Based on these results, we then give the proof for Theorem 3.1.
\\

\noindent\textbf{Proof for Lemma 3.1.}

\begin{proof}
For a given allowed set $S$ of a weakly decomposable norm $p$,
denote
\begin{eqnarray*}
&&C_{1}=\Big\{z\in\R^{n}\Big| p(z_{S})\leq 1,\ z^{\bar{S}}=0\Big\},\quad
C_{2}=\Big\{z\in\R^{n}\Big| p^{\bar{S}}(z^{\bar{S}})\leq 1,\ z^{S}=0\Big\},\\
&&C=\Big\{z\in\R^{n}\Big| p(z_{S})+p^{\bar{S}}(z^{\bar{S}})\leq 1\Big\}.
\end{eqnarray*}
Then we have that
\begin{eqnarray}
\delta^{*}_{C_{1}}(\beta)&=&\max_{z\in\R^{n}}\Big\{\langle z,\beta\rangle\Big| p(z_{S})\leq 1, z^{\bar{S}}=0\Big\}=\max_{z\in\R^{n}}\Big\{\langle z,\beta_{S}\rangle\Big| p(z_{S})\leq 1, z^{\bar{S}}=0\Big\}\nonumber\\
&\leq&\max_{z\in\R^{n}}\Big\{\langle z,\beta_{S}\rangle\Big| p(z)\leq 1\Big\}=p_{*}(\beta_{S}),\label{eq:conv-delta-dual}\\
\delta^{*}_{C_{2}}(\beta)&=&\max_{z\in\R^{n}}\Big\{\langle z,\beta\rangle\Big| p^{\bar{S}}(z^{\bar{S}})\leq 1, z^{S}=0\Big\}=\max_{z\in\R^{n}}\Big\{\langle z^{\bar{S}},\beta^{\bar{S}}\rangle\Big| p^{\bar{S}}(z^{\bar{S}})\leq 1\Big\}
\nonumber\\
%&=&\max_{z\in\R^{|\Sc|}}\Big\{\langle z,\beta^{\bar{S}}\rangle\Big| p^{\bar{S}}(z)\leq 1\Big\}
&=&p^{\bar{S}}_{*}(\beta^{\bar{S}}).
\end{eqnarray}
Furthermore, on one hand, for any $x\in C_{1}$, $y\in C_{2}$ and $0\leq t\leq 1$, it is easy to prove that $tx+(1-t)y\in C$. That is $\mbox{conv}(C_{1}\cup C_{2})\subseteq C$. On the other hand, for any $z\in C$ with $z^{S}=0$ or $z^{\bar{S}}=0$, it is clear that $z\in \mbox{conv}(C_{1}\cup C_{2})$; and for any $z\in C$ with $z^{S}\neq0$ and $z^{\bar{S}}\neq0$, we can find $x=\frac{z_{S}}{p(z_{S})}\in C_{1}$ and $y=\frac{z_{\bar{S}}}{1-p(z_{S})}\in C_{2}$ such that $z=p(z_{S})x+(1-p(z_{S}))y$. Therefore we
have shown that $C=\mbox{conv}(C_{1}\cup C_{2})$.

Due to Theorem 5.6 of \cite{Rockafellar96}, we can prove the following fact easily.
\begin{eqnarray}\label{eq:conv-delta-fun}
\mbox{conv}(\delta_{C_{1}},\delta_{C_{2}})(\beta)=\delta_{\mbox{conv}(C_{1}\cup C_{2})}(\beta),\quad \forall\, \beta\in\R^{n},
\end{eqnarray}
where $\mbox{conv}(\delta_{C_{1}},\delta_{C_{2}})$ denotes the
greatest convex function that is less than or equal to
$\delta_{C_1}$ and $\delta_{C_2}$ pointwise over the entire $\R^n$.
Based on the above basic results (\ref{eq:conv-delta-dual})-(\ref{eq:conv-delta-fun}), $C=\mbox{conv}(C_{1}\cup C_{2})$ and Theorem 16.5 of \cite{Rockafellar96}, we have that
\begin{eqnarray*}
p_{*}(\beta)&=&\max_{z\in\R^{n}}\Big\{\langle \beta,z\rangle\Big| p(z)\leq 1\Big\}\leq\max_{z\in\R^{n}}\Big\{\langle \beta,z\rangle\Big| p(z_{S})+p^{\bar{S}}(z^{\bar{S}})\leq 1\Big\}\\
&=&\delta^{*}_{C}(\beta)=\delta^{*}_{\mbox{conv}(C_{1}\cup C_{2})}(\beta)=\left(\mbox{conv}(\delta_{C_{1}},\delta_{C_{2}})\right)^{*}(\beta)=\max\Big\{\delta^{*}_{C_{1}}(\beta),\delta^{*}_{C_{2}}(\beta)\Big\}\\
&\leq&\max\Big\{p_{*}(\beta_{S}),p^{\bar{S}}_{*}(\beta^{\bar{S}})\Big\}.
\end{eqnarray*}
The desired results follow by taking $\beta=\truebeta$ and dividing both sides of the above inequality by $\|\varepsilon\|$ with $\beta=\varepsilon^{T}X$, respectively.
\end{proof}

\noindent\textbf{Proof for Lemma 3.3.}
\begin{proof}
Since $\hat{\beta}\in\mathop{\mbox{argmin}}\limits_{\beta\in\R^{n}}\left\{h(\beta;\tauone,\tautwo,0,0,b)\right\}$ and $\pone$ is a convex function, we have
\begin{eqnarray*}
-\frac{X^{T}(X\hat{\beta}-b)}{\|X\hat{\beta}-b\|}-\tauone\hat{\beta}-\tautwo X^{T}(X\hat{\beta}-b)\in\lambda\partial
\pone(\hat{\beta}).
\end{eqnarray*}
Hence
\begin{eqnarray}\label{eq:lemma3.2-proof}
\lambda \pone(\beta)\geq \lambda
\pone(\hat{\beta})+\left\langle\frac{X^{T}\hat{\varepsilon}}{\|\hat{\varepsilon}\|}-\tauone\hat{\beta}+\tautwo X^{T}\hat{\varepsilon},\beta-\hat{\beta}\right\rangle.
\end{eqnarray}
Let $\beta=\truebeta$. Then the inequality (\ref{eq:lemma3.2-proof}) can be rearranged to
\begin{eqnarray*}
\Big(\tautwo+\frac{1}{\|\hat{\varepsilon}\|}\Big)\hat{\varepsilon}^{T}X(\truebeta-\hat{\beta})&\leq&
\lambda \pone(\truebeta)-\lambda \pone(\hat{\beta})+\tauone\hat{\beta}^{T}(\truebeta-\hat{\beta})
\;\leq\;
\lambda \pone(\truebeta)+\tauone\hat{\beta}^{T}\truebeta\\
&\leq&\lambda
\pone(\truebeta)+\tauone\pstar(\truebeta)\pone(\hat{\beta}).
\end{eqnarray*}
Note that the last inequality is obtained by the definition of $\pstar$.
The desired result now follows readily.
\end{proof}

\noindent \textbf{Proof for Lemma 3.4.}
\begin{proof}
Since $\hat{\beta}\in\mathop{\mbox{argmin}}\limits_{\beta\in\R^{n}}\left\{h(\beta;\tauone,\tautwo,0,0,b)\right\}$, we have
$h(\hat{\beta};\tauone,\tautwo,0,0,b)\leq h(\truebeta;\tauone,\tautwo,0,0,b)$.
Thus, by the definition of the dual norm, we get
\begin{eqnarray}
\|\hat{\varepsilon}\|\leq
\|\varepsilon\|+\frac{\tautwo}{2}\|\varepsilon\|^{2}+\frac{\tauone}{2}\|\truebeta\|^{2}+\lambda
\pone(\truebeta)\leq\|\varepsilon\|+\frac{\tautwo}{2}\|\varepsilon\|^{2}+\left(\lambda+\frac{\tauone\pstar(\truebeta)}{2}\right)\pone(\truebeta),
\label{eq:lemma3.2-proof-2}\\
\lambda \pone(\hat{\beta})\leq\|\varepsilon\|+\frac{\tautwo}{2}\|\varepsilon\|^{2}+\frac{\tauone}{2}\|\truebeta\|^{2}+\lambda
\pone(\truebeta)\leq\|\varepsilon\|+\frac{\tautwo}{2}\|\varepsilon\|^{2}+\left(\lambda+\frac{\tauone\pstar(\truebeta)}{2}\right)\pone(\truebeta).\label{eq:lemma3.2-proof-3}
\end{eqnarray}
Dividing both sides of (\ref{eq:lemma3.2-proof-2}) by $\|\varepsilon\|$, we obtain
\begin{eqnarray*}
\frac{\|\hat{\varepsilon}\|}{\|\varepsilon\|}\leq
1+\frac{\tautwo}{2}\|\varepsilon\|+n_{p}+\frac{\tauone\pstar(\truebeta)p(\truebeta)}{2\|\varepsilon\|}=c_{u},
\end{eqnarray*}
where $c_u$ is defined in \eqref{eq:notation-a}.
In order to obtain the lower bound of
$\frac{\|\hat{\varepsilon}\|}{\|\varepsilon\|}$, we first
use the triangle inequality
$\|\hat{\varepsilon}\|=\|\varepsilon-X(\hat{\beta}-\truebeta)\|\geq
\|\varepsilon\|-\|X(\hat{\beta}-\truebeta)\|$,
and then the upper bound of $\|X(\hat{\beta}-\truebeta)\|$. By Lemma
3.3 and the definition of the dual norm, we have
\begin{eqnarray*}
&& \hspace{-0.7cm} \|X(\hat{\beta}-\truebeta)\|^{2}
\;=\; \varepsilon^{T}X(\hat{\beta}-\truebeta)+\hat{\varepsilon}^{T}X(\truebeta-\hat{\beta})\\
&\leq&\varepsilon^{T}X(\hat{\beta}-\truebeta)
+\kappa\left(\lambda \pone(\truebeta)+\tauone\pstar(\truebeta)\pone(\hat{\beta})\right)\\
&\leq&\lambda_{0}\pone(\hat{\beta}-\truebeta)\|\varepsilon\|+\kappa\left(\lambda \pone(\truebeta)+\tauone\pstar(\truebeta)\pone(\hat{\beta})\right)\\
&\leq&\lambda_{0}\left(\pone(\hat{\beta})+\pone(\truebeta)\right)\|\varepsilon\|+\kappa\left(\lambda \pone(\truebeta)+\tauone\pstar(\truebeta)\pone(\hat{\beta})\right)\\
&=&\left(\lambda_{0}\|\varepsilon\|+\lambda\kappa\right)\pone(\truebeta)
+\left(\lambda_{0}\|\varepsilon\|+\tauone\kappa\pstar(\truebeta)\right)\pone(\hat{\beta}),
\end{eqnarray*}
where $\kappa = \left(\tautwo+\frac{1}{\|\hat{\varepsilon}\|}\right)^{-1}.$
Substituting the inequality (\ref{eq:lemma3.2-proof-3}) into the above formula, we can obtain
\begin{eqnarray*}
\|X(\hat{\beta}-\truebeta)\|^{2}&\leq&\frac{\|\varepsilon\|+\frac{\tautwo}{2}\|\varepsilon\|^{2}}{\lambda}\left(\lambda_{0}\|\varepsilon\|+\tauone\kappa\pstar(\truebeta)
\right)\\
&&+\Big(2\lambda_{0}\|\varepsilon\|+(\lambda\kappa+\tauone\kappa\pstar(\truebeta))
+\frac{\tauone\pstar(\truebeta)}{2\lambda}(\lambda_{0}\|\varepsilon\|+\tauone\kappa\pstar(\truebeta))\Big)\pone(\truebeta).
\end{eqnarray*}
Rearranging the above inequality, we have
\begin{eqnarray*}
\|X(\hat{\beta}-\truebeta)\|&\leq&\|\varepsilon\|\sqrt{\hat{a}+\frac{2\lambda_{0}\pone(\truebeta)}{\|\varepsilon\|}+\frac{\|\hat{\varepsilon}\|}{\|\varepsilon\|}\frac{\left(\lambda+\tauone\pstar(\truebeta)\right) \pone(\truebeta)}{(1+\tautwo\|\hat{\varepsilon}\|)\|\varepsilon\|}}\\
&\leq&\|\varepsilon\|\sqrt{\hat{a}+\frac{2\lambda_{0}n_{\pone}}{\lambda}+\frac{\|\hat{\varepsilon}\|}{\|\varepsilon\|}\left(1+\frac{\tauone\pstar(\truebeta)}{\lambda}\right)n_{\pone}},
\end{eqnarray*}
where
\begin{eqnarray}\label{eq:def-a}
\hat{a}&=&\left(\frac{\left(1+\frac{\tautwo}{2}\|\varepsilon\|\right)}{\lambda}+\frac{\tauone\pstar(\truebeta)\pone(\truebeta)}{2\lambda\|\varepsilon\|}\right)\left(\lambda_{0}+\frac{\tauone\kappa\pstar(\truebeta)}{\|\varepsilon\|}\right)\nonumber\\
&\leq&\left(\frac{\left(1+\frac{\tautwo}{2}\|\varepsilon\|\right)}{\lambda}+\frac{\tauone\pstar(\truebeta)\pone(\truebeta)}{2\lambda\|\varepsilon\|}\right)\left(\lambda_{0}+\tauone\pstar(\truebeta)c_{u}\right)=a.
\end{eqnarray}
Therefore, by noting that $\norm{X(\hat{\beta}-\truebeta)} = \norm{\varepsilon-
\hat{\varepsilon}}$ and triangle inequality, we have
\begin{eqnarray*}
%\|\hat{\varepsilon}\|&\geq&\|\varepsilon\|-\|\varepsilon\|\sqrt{a+\frac{2\lambda_{0}n}{\lambda}+\frac{\|\hat{\varepsilon}\|}{\|\varepsilon\|}\left(1+\frac{\tauone\pstar(\truebeta)}{\lambda}\right)n},\\
\frac{\|\hat{\varepsilon}\|}{\|\varepsilon\|}\geq
1-\sqrt{a+\frac{2\lambda_{0}n_{\pone}}{\lambda}+\frac{\|\hat{\varepsilon}\|}{\|\varepsilon\|}\left(1+\frac{\tauone\pstar(\truebeta)}{\lambda}\right)n_{\pone}}.
\end{eqnarray*}
By rearranging the above inequality, in the case when
$\frac{\|\hat{\varepsilon}\|}{\|\varepsilon\|}<1$, we
further derive that
\begin{eqnarray*}
a+\frac{2\lambda_{0}n_{\pone}}{\lambda}+\frac{\|\hat{\varepsilon}\|}{\|\varepsilon\|}\left(1+\frac{\tauone\pstar(\truebeta)}{\lambda}\right)n_{\pone}\geq\left(1-\frac{\|\hat{\varepsilon}\|}{\|\varepsilon\|}\right)^{2}\geq 1- \frac{2\|\hat{\varepsilon}\|}{\|\varepsilon\|}.
\end{eqnarray*}
Then we can obtain that
\begin{eqnarray*}
\frac{\|\hat{\varepsilon}\|}{\|\varepsilon\|}\geq\frac{1-a-\frac{2\lambda_{0}n_{\pone}}{\lambda}}{2+\left(1+\frac{\tauone\pstar(\truebeta)}{\lambda}\right)n_{\pone}}:=c_{l}>0.
\end{eqnarray*}
In the other case, if
$\frac{\|\hat{\varepsilon}\|}{\|\varepsilon\|}\geq 1$, we
have already obtain a lower bound that is larger than $c_{l}$.
%$\frac{1-a-\frac{2\lambda_{0}f}{\lambda}}{2+\left(1+\frac{\tauone\pone^{*}(\truebeta)}{\lambda}\right)f}$.
\end{proof}

\noindent\textbf{Proof for Theorem 3.1.}
\begin{proof}
First we note that if the following inequality holds
\begin{eqnarray*}
\langle X(\hat{\beta}-\truebeta),X(\hat{\beta}-\beta)\rangle&\leq&
-\delta\left((\tilde{\lambda}+\tilde{\lambda}_{m})\pone(\hat{\beta}_{S}-\beta)+(\hat{\lambda}-\tilde{\lambda}_{m})\pone^{\Sc}(\hat{\beta}^{\Sc})\right)\|\varepsilon\|
\\
&&+\tauone c_{u}\|\hat{\beta}-\truebeta\|\|\beta-\truebeta\|\|\varepsilon\|,
\end{eqnarray*}
then we can verify that the theorem is valid by the
following simple calculations:
\begin{eqnarray*}
\lefteqn{\ \ \|X(\hat{\beta}-\truebeta)\|^{2}-\|X(\beta-\truebeta)\|^{2}+2\delta\left((\tilde{\lambda}+\tilde{\lambda}_{m})\pone(\hat{\beta}_{S}-\beta)+(\hat{\lambda}-\tilde{\lambda}_{m})\pone^{\Sc}(\hat{\beta}^{\Sc})\right)\|\varepsilon\|}\\
&=&2\langle
X(\hat{\beta}-\truebeta),X(\hat{\beta}-\beta)\rangle-\|X(\beta-\hat{\beta})\|^{2}+2\delta\left((\tilde{\lambda}+\tilde{\lambda}_{m})\pone(\hat{\beta}_{S}-\beta)+(\hat{\lambda}-\tilde{\lambda}_{m})\pone^{\Sc}(\hat{\beta}^{\Sc})\right)\|\varepsilon\|\\
&\leq&-\|X(\beta-\hat{\beta})\|^{2}+2\tauone c_{u}\|\hat{\beta}-\truebeta\|\|\beta-\truebeta\|\|\varepsilon\|\\
&\leq& 2\tauone c_{u}\|\hat{\beta}-\truebeta\|\|\beta-\truebeta\|\|\varepsilon\|.
\end{eqnarray*}
Thus it is sufficient to show that the result is true if
\begin{eqnarray}\label{eq:thm-1-proof-assume}
\langle X(\hat{\beta}-\truebeta),X(\hat{\beta}-\beta)\rangle&\geq&
-\delta\left((\tilde{\lambda}+\tilde{\lambda}_{m})\pone(\hat{\beta}_{S}-\beta)+(\hat{\lambda}-\tilde{\lambda}_{m})\pone^{\Sc}(\hat{\beta}^{\Sc})\right)\|\varepsilon\|
\\
&&+\tauone c_{u}\|\hat{\beta}-\truebeta\|\|\beta-\truebeta\|\|\varepsilon\|.\nonumber
\end{eqnarray}
By the inequality (\ref{eq:lemma3.2-proof}) and the fact that
$\hat{\varepsilon}=X(\truebeta-\hat{\beta}+\varepsilon)$, we can get
\begin{eqnarray}\label{eq:proof-thm-2}
 \langle
X(\hat{\beta}-\truebeta),X(\hat{\beta}-\beta)\rangle+\lambda\kappa\pone(\hat{\beta})
&\leq &
\langle\varepsilon,X(\hat{\beta}-\beta)\rangle
+\tauone\kappa\langle\hat{\beta},\beta-\hat{\beta}\rangle
+\lambda\kappa\pone(\beta),
\end{eqnarray}
where $\kappa := \left(\tautwo+\frac{1}{\|\hat{\varepsilon}\|}\right)^{-1}.$
Since $\mbox{supp}(\beta)\subseteq S$, it follows from the definition of the dual norm and the generalized Cauchy-Schwartz inequality that
\begin{eqnarray}\label{eq:proof-thm-3}
&& \hspace{-0.7cm}
\langle\varepsilon,X(\hat{\beta}-\beta)\rangle
\;=\;\langle\varepsilon,X(\hat{\beta}_{S}-\beta)\rangle
+ \langle\varepsilon,X(\hat{\beta}_{\Sc}-\beta)\rangle
\nonumber\\
&\leq&
\left(\pone_{*}((\varepsilon^{T}X)_{S})\pone(\hat{\beta}_{S}-\beta)+\pone^{\Sc}_{*}((\varepsilon^{T}X)^{\Sc})\pone^{\Sc}(\hat{\beta}^{\Sc})\right)
\;\leq\;
\lambda_{m}\left(\pone(\hat{\beta}_{S}-\beta)+\pone^{\Sc}(\hat{\beta}^{\Sc})\right)\|\varepsilon\|.
\end{eqnarray}
By substituting (\ref{eq:proof-thm-3}) into (\ref{eq:proof-thm-2}), we obtain
\begin{eqnarray}\label{eq:proof-thm-4}
\lefteqn{\ \ \langle
X(\hat{\beta}-\truebeta),X(\hat{\beta}-\beta)\rangle+\lambda\kappa\pone(\hat{\beta})}\nonumber\\
&\leq&\lambda_{m}\left(\pone(\hat{\beta}_{S}-\beta)
+\pone^{\Sc}(\hat{\beta}^{\Sc})\right)\|\varepsilon\|
+\tauone\kappa\langle\hat{\beta},\beta-\hat{\beta}\rangle
+\lambda\kappa\pone(\beta).
\end{eqnarray}
Furthermore, by using the weak decomposability and the triangle inequality in (\ref{eq:proof-thm-4}) we derive
\begin{eqnarray}\label{eq:proof-thm-5}
\lefteqn{\ \ \langle
X(\hat{\beta}-\truebeta),X(\hat{\beta}-\beta)\rangle
+\lambda\kappa\left(\pone^{\Sc}(\hat{\beta}^{\Sc})+\pone(\hat{\beta}_{S})\right)}\nonumber\\
&\leq&\lambda_{m}\left(\pone(\hat{\beta}_{S}-\beta)
+\pone^{\Sc}(\hat{\beta}^{\Sc})\right)\|\varepsilon\|
+\tauone\kappa\langle\hat{\beta},\beta-\hat{\beta}\rangle
+\lambda\kappa\left(\pone(\hat{\beta}_{S})+\pone(\hat{\beta}_{S}-\beta)\right).
\end{eqnarray}
Then by eliminating $\lambda\kappa\pone(\hat{\beta}_{S})$ on both sides of (\ref{eq:proof-thm-5}) and using the weak decomposability, we get
\begin{eqnarray}\label{eq:proof-thm-6}
\lefteqn{\ \ \langle
X(\hat{\beta}-\truebeta),X(\hat{\beta}-\beta)\rangle
+\lambda\kappa\pone^{\Sc}(\hat{\beta}^{\Sc})}\nonumber\\
&\leq&\lambda_{m}\left(\pone(\hat{\beta}_{S}-\beta)
+\pone^{\Sc}(\hat{\beta}^{\Sc})\right)\|\varepsilon\|
+\tauone\kappa\langle\hat{\beta},\beta-\hat{\beta}\rangle
+\lambda\kappa \pone(\hat{\beta}_{S}-\beta)
\nonumber\\
&\leq&\lambda_{m}\left(\pone(\hat{\beta}_{S}-\beta)
+\pone^{\Sc}(\hat{\beta}^{\Sc})\right)\|\varepsilon\|
+\tauone\kappa\langle\truebeta,\beta-\hat{\beta}\rangle
+\tauone\kappa\langle\hat{\beta}-\truebeta,\beta-\truebeta\rangle
+\lambda\kappa\pone(\hat{\beta}_{S}-\beta)
\nonumber\\
&\leq&\lambda_{m}\left(\pone(\hat{\beta}_{S}-\beta)+\pone^{\Sc}(\hat{\beta}^{\Sc})\right)\|\varepsilon\|+\lambda\kappa\pone(\hat{\beta}_{S}-\beta)
+\lambda_{m}\tauone\kappa\left(\pone(\hat{\beta}_{S}-\beta)+\pone^{\Sc}(\hat{\beta}^{\Sc})\right)
\\
&& +\tauone\kappa\|\hat{\beta}-\truebeta\|\|\beta-\truebeta\|. \nonumber
\end{eqnarray}
By using the result of Lemma 3.4, the inequality (\ref{eq:proof-thm-6}) becomes
\begin{eqnarray}\label{eq:proof-thm-7}
\lefteqn{\ \ \langle
X(\hat{\beta}-\truebeta),X(\hat{\beta}-\beta)\rangle+\left(\hat{\lambda}-\tilde{\lambda}_{m}\right)\pone^{\Sc}(\hat{\beta}^{\Sc})\|\varepsilon\|}\nonumber\\
&\leq&\left(\tilde{\lambda}+\tilde{\lambda}_{m}\right)\pone(\hat{\beta}_{S}-\beta)\|\varepsilon\|
+\tauone c_{u}\|\hat{\beta}-\truebeta\|\|\beta-\truebeta\|\|\varepsilon\|.
\end{eqnarray}
From the condition (\ref{eq:thm-1-proof-assume}) in (\ref{eq:proof-thm-7}) and
simple rearrangement, we have that
\begin{eqnarray*}
\left(\hat{\lambda}-\tilde{\lambda}_{m}\right)(1-\delta)\pone^{\Sc}(\hat{\beta}^{\Sc})
\leq\left(\tilde{\lambda}+\tilde{\lambda}_{m}\right)(1+\delta)\pone(\hat{\beta}_{S}-\beta).
\end{eqnarray*}
By Lemma 3.1 we have $\lambda_{0}\leq\lambda_{m}$ and $p_{*}(\truebeta)\leq\lambda_{m}$.
Since
\begin{eqnarray*}
\frac{\lambda-\lambda_{m}t_{1}(1+\sigma t_{1}+\sigma n_{p})-2\lambda_{m}n_{p}}{\lambda\left(2+n_{p}+\frac{\sigma p_{*}(\truebeta)p(\truebeta)}{\|\varepsilon\|}\right)}<c_{l}<\frac{1}{2+n_{p}+\frac{\sigma p_{*}(\truebeta)p(\truebeta)}{\|\varepsilon\|}},
\end{eqnarray*}
we can see that
\begin{eqnarray*}
\hat{\lambda}>\frac{\lambda-\lambda_{m} t_{1}(1+\sigma t_{1}+\sigma n_{p})-2\lambda_{m}n_{p}}{t_{2}+n_{p}}.
\end{eqnarray*}
Then it is easy to find that if
\begin{eqnarray}\label{eq:def-c}
\frac{s_{2}-\sqrt{s_{2}^{2}-4s_{1}s_{3}}}{2s_{1}}<\lambda<\frac{s_{2}+\sqrt{s_{2}^{2}-4s_{1}s_{3}}}{2s_{1}},
\end{eqnarray}
then we have $\hat{\lambda} = \lambda c_l/(1+\tautwo c_l) >\tilde{\lambda}_{m} = \lambda_m (1+\tauone c_u)$.

Furthermore,
\begin{eqnarray*}
\pone^{\Sc}(\hat{\beta}^{\Sc})\leq\left(\frac{\tilde{\lambda}+\tilde{\lambda}_{m}}{\hat{\lambda}-\tilde{\lambda}_{m}}\right)\cdot\frac{1+\delta}{1-\delta}\cdot
\pone(\hat{\beta}_{S}-\beta).
\end{eqnarray*}
From the definition of $L_{S}$ and Lemma 3.2 with the assumption $\mbox{supp}(\beta)\subseteq S$, it follows that
\begin{eqnarray*}
&&\pone^{\Sc}(\hat{\beta}^{\Sc})\leq
L_{S}\pone(\hat{\beta}_{S}-\beta),\quad \pone(\hat{\beta}_{S}-\beta)\leq\Gamma_{\pone}(L_{S},S)\|X(\hat{\beta}-\beta)\|.
\end{eqnarray*}
By using the inequality (\ref{eq:proof-thm-7}), we can derive that
\begin{eqnarray*}
&&\langle X(\hat{\beta}-\truebeta),X(\hat{\beta}-\beta)\rangle +
\delta\|\varepsilon\|(\hat{\lambda}-\tilde{\lambda}_{m})\pone^{\Sc}(\hat{\beta}^{\Sc})
\\[5pt]
&&\leq(\tilde{\lambda}+\tilde{\lambda}_{m})\pone(\hat{\beta}_{S}-\beta)\|\varepsilon\|+\tauone c_{u}\|\hat{\beta}-\truebeta\|\|\beta-\truebeta\|\|\varepsilon\|
\\[5pt]
&&\leq(1+\delta)(\tilde{\lambda}+\tilde{\lambda}_{m})\Gamma_{\pone}(L_{S},S)\|X(\hat{\beta}-\beta)\|\|\varepsilon\|-\delta(\tilde{\lambda}+\tilde{\lambda}_{m})\pone(\hat{\beta}_{S}-\beta)\|\varepsilon\|\\
&&\quad+\tauone c_{u}\|\hat{\beta}-\truebeta\|\|\beta-\truebeta\|\|\varepsilon\|.
\end{eqnarray*}
Noticing that
\begin{eqnarray*}
& 2\langle
X(\hat{\beta}-\truebeta),X(\hat{\beta}-\beta)\rangle=\|X(\hat{\beta}-\truebeta)\|^{2}-\|X(\beta-\truebeta)\|^{2}+\|X(\hat{\beta}-\beta)\|^{2}, & \\
& 2(1+\delta)|(\tilde{\lambda}+\tilde{\lambda}_{m})\Gamma_{\pone}(L_{S},S)\|X(\hat{\beta}-\beta)\|\|\varepsilon\|\leq\left((1+\delta)(\tilde{\lambda}+\tilde{\lambda}_{m})\Gamma_{\pone}(L_{S},S)\right)^{2}\|\varepsilon\|
+\|X(\hat{\beta}-\beta)\|^{2}, &
\end{eqnarray*}
we get
\begin{eqnarray*}
&&\quad\|X(\hat{\beta}-\truebeta)\|^{2}+2\delta\left((\hat{\lambda}-\tilde{\lambda}_{m})\pone^{\Sc}(\hat{\beta}^{\Sc})+(\tilde{\lambda}+\tilde{\lambda}_{m})\pone(\hat{\beta}_{S}-\beta)\right)\|\varepsilon\|\\
&&\quad\leq\|X(\beta-\truebeta)\|^{2}+(1+\delta)^{2}(\tilde{\lambda}+\tilde{\lambda}_{m})^{2}\Gamma_{\pone}^{2}(L_{S},S)\|\varepsilon\|^{2}+2\tauone c_{u}\|\hat{\beta}-\truebeta\|\|\beta-\truebeta\|\|\varepsilon\|.
\end{eqnarray*}
Therefore the oracle inequality holds and this completes the proof.
\end{proof}

\section*{Acknowledgements}
We would like to thank Prof. Jian Huang  and Dr. Ying Cui   for bringing the references \cite{SunZ2012} and \cite{XuCM}, respectively, to our attention. We also thank Prof. Sara van de Geer at ETH Z$\ddot{u}$rich and Dr. Benjamin Stucky at University of Z$\ddot{u}$rich for the fruitful discussions. Last but not least, we thank the action editor Dr. David Wipf and the anonymous referees for their helpful suggestions to improve the manuscript.

%%
%\begin{acknowledgements}
%Acknowledgements belong here.
%\end{acknowledgements}

\end{document}